\titleformat{\section}[hang]
{\normalfont\Large\bfseries}
{\thesection.}{0.5em}{}
\titlespacing*{\section}{0pc}{2pc}{0.25pc}
\titleformat{\subsection}[runin]
{\normalfont\large\bfseries}
{\thesubsection}{0.5em}{}
\titlespacing{\subsection}{0pc}{1.5pc}{0.5pc}
\newcommand{\supp}{\text{supp}}
\newcommand{\N}{\mathbb{N}}
\newcommand{\R}{\mathbb{R}}
\newcommand{\C}{\mathbb{C}}
\newcommand{\B}{\mathcal{B}}
\newcommand{\J}{\mathscr{J}}
\newcommand{\HS}{\text{HS}}
\newcommand{\1}{\mathds{1}}
\newcommand{\abs}[1]{\left|#1\right|}
\newcommand{\norm}[1]{\left\|#1\right\|}
\newcommand{\set}[1]{\left\{#1\right\}}
\newcommand{\paren}[1]{\left(#1\right)}
\newcommand{\ang}[1]{\left\langle#1\right\rangle}
\newcommand{\BT}{B\ang{T}}
\newcommand{\BX}{B\ang{X}}
\DeclareMathOperator{\ev}{ev}
\newcommand{\II}{\rm{II}}
\newcommand{\<}{\left\langle}
\renewcommand{\>}{\right\rangle}
\renewcommand{\Re}[1]{\text{Re}\ #1}
\newcommand{\dom}{\text{dom}}
\newcommand{\mc}[1]{\mathcal{#1}}
\newcommand{\mmop}{{M\bar\otimes M^\circ}}
\newtheorem{thm}{Theorem}[section]
\newtheorem{prop}[thm]{Proposition}
\newtheorem{lem}[thm]{Lemma}
\newtheorem{cor}[thm]{Corollary}
\theoremstyle{definition}
\newtheorem{defi}[thm]{Definition}
\newtheorem{ex}[thm]{Example}
\newtheorem{rem}[thm]{Remark}
\title{\textbf{Free Stein Irregularity and Dimension}}
\author{Ian Charlesworth$^\circ$}
\address{$^\circ$Department of Mathematics, University of California, Berkeley \hfill \url{ilc@math.berkeley.edu}}
\author{Brent Nelson$^\bullet$}
\address{$^\bullet$Department of Mathematics, Michigan State University \hfill \url{brent@math.msu.edu}}
\date{}
\begin{document}

\maketitle

\begin{abstract}
		We introduce a free probabilistic quantity called free Stein irregularity, which is defined in terms of free Stein discrepancies.
		It turns out that this quantity is related via a simple formula to the Murray--von Neumann dimension of the closure of the domain of the adjoint of the non-commutative Jacobian associated to Voiculescu's free difference quotients.
		We call this dimension the free Stein dimension, and show that it is a $*$-algebra invariant.
		We relate these quantities to the free Fisher information, the non-microstates free entropy, and the non-microstates free entropy dimension.
		In the one-variable case, we show that the free Stein dimension agrees with the free entropy dimension, and in the multivariable case compute it in a number of examples.
\end{abstract}


\section*{Introduction.}

In free probability, given an $n$-tuple of self-adjoint operators $X:=(x_1,\ldots, x_n)$ in a tracial von Neumann algebra $(M,\tau)$, a \emph{regularity condition} is some quantitative behavior of the joint distribution of $X$ that implies some qualitative behavior of the individual operators $x_1,\ldots, x_n$ or the algebras (von Neumann or otherwise) that they generate.
All of the well-studied regularity conditions fall broadly into two categories: microstates and non-microstates.
Examples of the former include Voiculescu's microstates free entropy $\chi(X)$, microstates free entropy dimension $\delta(X)$ \cite{VoiII}, modified microstates free entropy dimension $\delta_0(X)$ \cite{VoiIII}, upper free orbit dimension $\mathfrak{K}_2(X)$ \cite{HS07}, and 1-bounded entropy $h(W^*(X))$ \cite{Hay18}.
Examples of the latter include non-microstates free entropy $\chi^*(X)$, free Fisher information $\Phi^*(X)$ \cite{VoiV}, non-microstates free entropy dimensions $\delta^*(X)$ and $\delta^*(X)$, and $\Delta(X)$ \cite{CS05}.

Roughly speaking, microstates quantities examine the joint distribution of $X$ in terms of how well it is approximated by finite dimensional matrix algebras, whereas non-microstates quantities consider the behavior of certain derivations on the polynomial algebra generated by $x_1,\ldots, x_n$. We recall a few of the regularity conditions corresponding to the aforementioned free probabilistic quantities:
	\begin{itemize}
		\item If $\Phi^*(x)<\infty$, then the spectral measure of $x$ is Lebesgue absolutely continuous with density in $L^3(\R,m)$ \cite{VoiI}.

		\item If $\delta(x)=1$, then $x$ is diffuse (i.e. its spectral measures has no atoms) \cite{VoiII}.

		\item If $\delta_0(X)>1$, then $W^*(X)$ has no Cartan subalgebras and does not have property $\Gamma$ \cite{VoiIII}.

		\item If $\delta_0(X)>1$, then $W^*(X)$ is prime \cite{Ge98}.

		\item If $\Phi^*(X)<\infty$, then $W^*(X)$ does not have property $\Gamma$ \cite{Dab10}.

		\item If $\delta^*(X)=n>1$, then $W^*(X)$ is a factor \cite{Dab10}.

		\item If $\delta^*(X)=n$, then every non-constant, self-adjoint $p\in \C\<x_1,\ldots, x_n\>$ is diffuse \cite{CS16, MSW17}.

		\item If $\Phi^*(X)<\infty$, then $\chi^*(p)>-\infty$ for every non-constant, self-adjoint $p\in \C\<x_1,\ldots, x_n\>$ \cite{BM18}.
	\end{itemize}

	In the present paper, we propose new quantities that fall into the non-microstates category: \emph{free Stein irregularity} and \emph{free Stein dimension}  (see Definitions \ref{def:free_Stein_irregularity} and \ref{def:free_Stein_dim}).
Motivated by work of the second author in \cite{FN17}, these quantities are defined via the free analogues of Stein kernels and Stein discrepancy (see \cite{LNP15} and its references). Given an $n$-tuple $(\xi_1,\ldots, \xi_n)\in L^2(M)^n$, the free Stein discrepancy of $X$ relative to this $n$-tuple (see Subsection \ref{subsec:free_Stein_kernels_discrepancy}) is a non-negative quantity that measures how close $\xi_1,\ldots, \xi_n$ are to being the conjugate variables to $x_1,\ldots, x_n$. In particular, the free Stein discrepancy is zero if and only if $\xi_1,\ldots, \xi_n$ are the conjugate variables, in which case $\Phi^*(X)<\infty$ and so the above results tell us that $W^*(X)$ does not have property $\Gamma$ and $\chi^*(p)>-\infty$ for every non-constant, self-adjoint $p\in \C\<x_1,\ldots, x_n\>$. Of course, determining that the free Stein discrepancy was zero required preexisting knowledge of the $n$-tuple $(\xi_1,\ldots, \xi_n)$ --- or a very lucky guess. 

In this paper, we explore what can be said if instead one merely supposes that the free Stein discrepancy can be made arbitrarily small by varying the $n$-tuple $(\xi_1,\ldots, \xi_n)\in L^2(M)^n$. We are therefore naturally driven to consider the infimum of free Stein discrepancies, which we define as the free Stein irregularity, and the situation of interest is simply the regularity condition of having zero free Stein irregularity. One immediately has that this is a weaker regularity condition than $\Phi^*(X)<\infty$, but it turns out to be a stronger condition than $\delta^*(X)=n$ (see Corollary \ref{cor:free_Stein_info_and_free_entropy_dim}). Interestingly, in the one variable case $\delta^*(X)=1$ is equivalent to having zero free Stein irregularity. This is because for $X=(x_1)$, the square of the free Stein irregularity can be computed explicitly and is given by the sum of the squares of masses of any atoms in the spectral measure of $x_1$ (see Theorem~\ref{thm:one_variable_computation}). In the general case, the free Stein irregularity is (somewhat surprisingly) given by a formula involving the Murray--von Neumann dimension of the domain of an unbounded operator (see Theorem \ref{thm:free_Stein_info_is_dim}): namely, the adjoint of the non-commutative Jacobian associated to Voiculescu's free difference quotients (see Subsection \ref{subsec:notation}). We call this dimension the free Stein dimension of $X$, and are able to further relate it to a module of closable derivations on the $\C\<x_1,\ldots, x_n\>$. From this characterization it follows that the free Stein dimension is a $*$-algebra invariant (see Theorem \ref{thm:algebra_inv}). Furthermore, we also consider the above quantities when $x_1,\ldots, x_n$ are considered as variables over a unital $*$-subalgebra $B\subset M$.

The structure of the paper is as follows. In Section~\ref{sec:prelims} we establish some notation and recall the definitions of free Stein kernels and free Stein discrepancy. In Section~\ref{sec:freesteinirreg}, we define free Stein irregularity, derive some elementary properties, and define free Stein dimension. In Section~\ref{sec:derivations}, we characterize free Stein dimension through modules of closable derivations and use this to show algebraic invariance. In Section~\ref{sec:relation_to_free_entropy}, we relate the free Stein irregularity and dimension to free Fisher information and non-microstates free entropy dimension(s), and compute the both explicitly in the one-variable case. In Section~\ref{sec:computations}, we compute the (multivariable) free Stein irregularity and dimension for a tuple of generating a group algebra or finite-dimensional algebra. We conclude the paper with a few appendices detailing interesting examples and computations.

\subsection*{Acknowledgments.}
The authors would like to thank Dimitri Shlyakhtenko for his useful comments and suggestions; in particular, for suggesting a cleaner approach to the results in Section~\ref{sec:derivations}.
They would also like to thank Michael Hartglass, Benjamin Hayes, and David Jekel for helpful discussions related to this paper.
This work was initiated while the authors were attending the Park City Mathematics Institute (PCMI) Summer Session on Random Matrices.
Part of this research was performed while the authors were visiting the Institute for Pure and Applied Mathematics (IPAM), which is supported by the National Science Foundation.
The first and second authors were supported by NSF grants DMS-1803557 and DMS-1502822, respectively.


\section{Preliminaries.}
\label{sec:prelims}

\subsection{Notation.}\label{subsec:notation}
Throughout $(M,\tau)$ denotes a tracial $W^*$-probability space. We denote by $L^2(M)$ the GNS Hilbert space corresponding to $\tau$ and identify $M$ with its representation on this space. We let $M^\circ=\{x^\circ\colon x\in M\}$ denote the opposite von Neumann algebra, represented on $L^2(M^\circ)$ which can be identified with the dual Hilbert space to $L^2(M)$. We let $\mmop$ denote the von Neumann algebra tensor product, which is equipped with the tensor product trace $\tau\otimes\tau^\circ$. We will typically repress the `$\circ$' notation on elements of $\mmop$.

Throughout, $X$ will denote a tuple $(x_1, \ldots, x_n) \in M^n$ (not necessarily self-adjoint), and $B \subset M$ will be a unital subalgebra.
We will always assume that for each $i=1,\ldots,n$, $x_i^*=x_j$ for some $j$ (possibly $j=i$ if $x_i$ is actually self-adjoint).
$T$ will denote a family $(t_1, \ldots, t_n)$ of indeterminates of the same length as $X$, and $\BT$ will be the algebra generated by $t_1, \ldots, t_n$ and $B$.
Note that there is a unique unital homomorphism $\BT \to \BX$ which sends each $t_i$ to $x_i$, which we will denote $\ev_X$; $\ev_X$ is always surjective but may fail to be injective. We will also use $\ev_X$ to denote the corresponding maps on $\BT^n$, $\BT\otimes\BT^\circ$, and $M_n(\BT\otimes \BT^\circ)$.

For each $i$, the \emph{free difference quotient} $\partial_i : \BT \to \BT\otimes \BT^\circ$ is defined to be the (unique) linear map with $\partial_i(t_j) = \delta_{i=j}$ and $B \subset \ker\partial_i$, satisfying the Leibniz rule; more precisely,
\[
	\partial_i(b_0t_{i_1}b_1t_{i_2}\cdots b_{d-1}t_db_{d})
	= \sum_{k=1}^d \delta_{i_k=i} b_0t_{i_1}b_1\cdots t_{k-1}b_{k-1}\otimes b_kt_{k+1}\cdots t_nb_n.
\]
We similarly define $\partial : \BT \to (\BT\otimes \BT^\circ)^n$ by $\partial p = (\partial_1p, \ldots, \partial_np)$, and let $\J : \BT^n \to M_n(\BT\otimes \BT^\circ)$ be the non-commutative Jacobian:
	\[
		\J(p_1,\ldots, p_n) = \left[\begin{array}{ccc} \partial_1 p_1 &\cdots& \partial_n p_1 \\ \partial_1 p_2 & \cdots & \partial_n p_2 \\ \vdots & \ddots & \vdots \\ \partial_1 p_n & \cdots & \partial_n p_n \end{array}\right] = \left[\begin{array}{c} \partial p_1 \\ \partial p_2 \\ \vdots \\ \partial p_n \end{array}\right].
	\]

For $\Xi=(\xi_1,\ldots, \xi_n),H=(\eta_1,\ldots, \eta_n)$ in either $L^2(M)^n$ or $L^2(M\bar\otimes M^{\circ})^n$ denote
	\[
		\<\Xi, H\>_2:= \sum_{j=1}^n \<\xi_j,\eta_j\>_2.
	\]
For $A,B\in M_n(L^2(\mmop))$ denote
	\[
		\<A, B\>_{\HS}:= \sum_{j,k=1}^n \< [A]_{jk}, [B]_{jk}\>_2.
	\]
We denote by $\#$ the usual product in $\mmop$ ($(a\otimes b)\# (c\otimes d)=(ac)\otimes (db)$), the usual product in $M_n(\mmop)$, the action of $\mmop$ on $L^2(M)$ ($(a\otimes b)\# c = acb$), the diagonal action of $\mmop$ on $L^2(M)^n$, and the action of $M_n(\mmop)$ on $L^2(M)^n$.

	In the case that $X$ satisfy no $B$-algebraic relations, we can view $\partial_i, \partial,$ and $\J$ defined on polynomials in the variables $X$ rather than the indeterminates $T$, and so they become densely-defined operators on $L^2(\BX)$ or $L^2(\BX)^n$ with codomains $L^2(\mmop), L^2(\mmop)^n$, or $M_n(L^2(\mmop))$, respectively.

	We denote by $\partial_{i:B}^*$, $\partial_{X:B}^*$, and $\J_{X:B}^*$ the adjoints of the implied relations on $B\ang{X}$; for example, we define $\partial_{i\colon B}^*$ to be the map with domain consisting of those $a \in L^2(\mmop)$ for which there is some $\eta \in L^2(\BX)$ such that for all $p \in \BT$ we have
	\[\ang{\eta, \ev_X(p)} = \ang{a, \ev_X\circ\partial_i(p)};\]
	we then set $\partial_i^*(a) = \eta$.
Thus $\partial_{i:B}^* : L^2(\BX\otimes\BX^\circ) \to L^2(\BX)$, $\partial_{X:B}^* : L^2(\BX\otimes\BX^\circ)^n \to L^2(\BX)$, and $\J_{X:B}^* : M_n(L^2(\BX\otimes\BX^\circ)) \to L^2(\BX)^n$ are unbounded operators, although their domains may fail to be dense.
When $X$ or $B$ are clear from context, we may suppress the relevant subscript.

Lastly, let us denote
	\[
		\1:=\left[\begin{array}{ccc} 1\otimes 1 & & 0 \\  & \ddots & \\ 0 & & 1\otimes 1 \end{array}\right],
	\]
so that $\J(X)=\1$.

\subsection{Free Stein kernels and free Stein discrepancy.}\label{subsec:free_Stein_kernels_discrepancy}
We recall some definitions below from \cite{FN17}.
These have been modified slightly to accommodate our consideration of non-algebraically free operators $X$ over a unital subalgebra $B$, but when $X$ is algebraically free and $B=\C$, we recover the original definition.
By working in this broader generality, we reap a number of benefits: we are able to consider freeness with amalgamation; we are able to compute free Stein dimensions in finite-dimensional algebras; and we are able derive some interesting statements about the free Stein dimension of certain generators of interpolated free group factors (see Appendix~\ref{app:examples}).
The reader may find it useful to gain intuition by considering (as the authors have) the simpler case outlined in Remark~\ref{rem:scalarcase}, where $B = \C$, $X$ is algebraically free and self-adjoint, and the free difference quotients are densely defined operators.

Given $\Xi\in L^2(M)^n$, we say that
	\[
		A\in M_n( L^2(\BX\otimes \BX^\circ))
	\]
is a \emph{free Stein kernel} of $X$ relative to $\Xi$ over $B$ if $A \in \dom(\J_{X:B}^*)$ and $\J_{X:B}^*(A) = \Xi$: to wit, if
	\begin{align}
	\ang{ \Xi, \ev_X P }_2 = \ang{ A, \ev_X\circ\J (P) }_{\HS}\qquad \forall P\in \BT^n.
	\end{align}
In this case we say (after \cite{Shl04}) that $\Xi$ is a \emph{partial conjugate variable} to $X$ corresponding to $A$.

The \emph{free Stein discrepancy} of $X$ relative to $\Xi$ over $B$ is the quantity
\[
	\Sigma^*(X\mid \Xi : B) := \inf_A \| A - \1\|_{\HS},
\]
where as before the infimum is over all free Stein kernels of $X$ relative to $\Xi$ over $B$.
Equivalently, $\Sigma^*(X\mid \Xi : B) = \norm{\Pi(A) - \1}_{\HS}$ where $A$ is any free Stein kernel of $X$ relative to $\Xi$ and $\Pi$ is the orthogonal projection onto the closure of the range of $\ev_X\circ\J$.

A priori the free Stein discrepancy could be infinite, since a free Stein kernel for $X$ need not exist. Indeed, if $\Xi$ is not orthogonal to $B^n \subset L^2(M)^n$ then for some $Z\in B^n$ we have
	\[
		\<\Xi, \ev_XZ\>_2 \neq 0 =\< A, \ev_X\circ \J (Z)\>_{\HS}\qquad \forall A\in M_n(L^2(M\bar\otimes M^{\circ})).
	 \]
For general unital subalgebras $B$, it is not clear if the condition $\Xi\perp B^n$ is sufficient to guarantee the existence of free Stein kernels.
However, in the case $B=\C$ it suffices by \cite[Theorem 2.1]{CFM18}, which we state below.

\begin{prop}[\!\!\cite{CFM18}]\label{prop:Mai}
For $\Xi=(\xi_1,\ldots, \xi_n)\in L^2(M)^n\ominus \C^n$,
	\[
		A_\Xi:=\left[\frac12 (\xi_i\otimes 1 - 1\otimes \xi_i)\#(x_j\otimes 1 - 1\otimes x_j) \right]_{i,j=1}^n\in M_n(L^2(\mmop))
	\]
is a free Stein kernel for $X$ relative to $\Xi$. Consequently, $\Sigma^*(X\mid \Xi)<\infty$ always.
\end{prop}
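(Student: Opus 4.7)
The plan is to verify directly from the definition that $A_\Xi$ satisfies the Stein kernel equation $\ang{\Xi, \ev_X P}_2 = \ang{A_\Xi, \ev_X\circ\J(P)}_\HS$ for all $P \in \BT^n$, from which the finiteness of the discrepancy follows immediately. By $\C$-bilinearity in $P$, it suffices to verify this when $P = (0,\ldots, p, \ldots, 0)$ has a single nonzero monomial $p = t_{i_1}\cdots t_{i_d}$ in position $i$. The constant case ($d = 0$) is immediate because $\partial_j$ annihilates constants and the assumption $\Xi \perp \C^n$ forces $\ang{\xi_i, 1}_2 = 0$, so the task reduces to showing
\[\ang{\xi_i, \ev_X(p)}_2 = \sum_{j=1}^n \ang{A_{\Xi, ij}, \ev_X \partial_j p}_2.\]

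The core algebraic ingredient is the telescoping identity
\[\sum_{j=1}^n \partial_j p \# (t_j \otimes 1 - 1 \otimes t_j) = p \otimes 1 - 1 \otimes p\]
in $\BT \otimes \BT^\circ$, proved by direct inspection of the monomial expansion: the $(t_j \otimes 1)$-factor contributes $t_{i_1}\cdots t_{i_k} \otimes t_{i_{k+1}}\cdots t_{i_d}$ for each $k$ with $i_k = j$, the $(1 \otimes t_j)$-factor contributes $t_{i_1}\cdots t_{i_{k-1}} \otimes t_{i_k}\cdots t_{i_d}$, and summing over $j$ picks up each $k \in \{1,\ldots, d\}$ exactly once so that the successive differences telescope to $p \otimes 1 - 1 \otimes p$. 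Using the adjointness $\ang{U \# V, W}_2 = \ang{U, W \# V^*}_2$ for bounded $V \in \mmop$ together with this identity (evaluated at $X$, where the $*$-closure of the tuple is invoked to identify each $x_j^* \otimes 1 - 1 \otimes x_j^*$ with an element of the original family so that the relabeled sum matches), the right-hand side transforms into
\[\frac{1}{2} \ang{\xi_i \otimes 1 - 1 \otimes \xi_i, \ev_X(p) \otimes 1 - 1 \otimes \ev_X(p)}_2.\]
Expanding this four-term pairing, the two cross terms vanish because $\xi_i \perp 1$, while the two diagonal terms each equal $\ang{\xi_i, \ev_X p}_2$ by traciality; combined with the factor $\frac{1}{2}$ this recovers the left-hand side.

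For the consequence, each entry $A_{\Xi, ij}$ is the product of the $L^2$ element $\xi_i \otimes 1 - 1 \otimes \xi_i$ with the bounded element $x_j \otimes 1 - 1 \otimes x_j$ of $\mmop$, so $A_\Xi \in M_n(L^2(\mmop))$ and hence $\Sigma^*(X\mid \Xi) \leq \|A_\Xi - \1\|_\HS < \infty$. The main obstacle is the telescoping identity and the careful bookkeeping of adjoints required to apply it through the pairing, particularly in the non-self-adjoint setting where one must exploit the $*$-closure of the tuple to restore the sum to the indexing used in the identity.
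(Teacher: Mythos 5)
The paper does not prove this proposition; it is quoted verbatim from \cite{CFM18} (Theorem~2.1 there), so there is no internal argument to compare against. Your argument for self-adjoint $X$ is correct and is the standard one: reduce to a single monomial $p$, use the telescoping identity $\sum_j \partial_j p \#(t_j\otimes 1 - 1\otimes t_j) = p\otimes 1 - 1\otimes p$, move the bounded factor $x_j\otimes 1 - 1\otimes x_j$ across the pairing, and finish using $\xi_i\perp 1$ and the tracial property of $\tau\otimes\tau^\circ$ to see the two surviving terms each give $\ang{\xi_i,\ev_X p}_2$.

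The parenthetical claim that the non-self-adjoint case is handled by ``relabeling so that the sum matches,'' however, is not correct. Moving the bounded factor produces $(x_j\otimes 1 - 1\otimes x_j)^* = x_j^*\otimes 1 - 1\otimes x_j^* = x_{\sigma(j)}\otimes 1 - 1\otimes x_{\sigma(j)}$, where $\sigma$ is the involution with $x_j^* = x_{\sigma(j)}$, and reindexing gives $\sum_j \ev_X\partial_{\sigma(j)}p\,\#\,(x_j\otimes 1 - 1\otimes x_j)$, which does not telescope unless $\sigma=\mathrm{id}$. Indeed, taking $n=2$, $x_2 = x_1^*$ not self-adjoint, $\xi_1\perp 1$, and $P=(t_1,0)$, the Stein kernel identity with $A_\Xi$ as written reduces to $\tau(\xi_1 x_1) = \tau(x_1^*\xi_1)$, i.e.\ $\tau(\xi_1 x_1) = \tau(\xi_1 x_1^*)$, which fails for, e.g., $x_1$ a Haar unitary and $\xi_1 = x_1^*$. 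So the formula for $A_\Xi$ as stated is a Stein kernel only for self-adjoint $X$ (the setting of \cite{CFM18}); for a genuinely non-self-adjoint $*$-closed tuple one would need $x_j^*$ in place of $x_j$ in the definition of $A_\Xi$, and the reindexing argument you gesture at does not repair the telescoping in the original form.
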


\begin{rem}
	For larger unital subalgebras $B$, $A_\Xi$ given in Proposition~\ref{prop:Mai} may fail to be a free Stein kernel. Indeed, if $x, s$ are freely independent semicircular variables, $B = \C[s]$, and $\xi = sxs$, one can compute that $\ang{\xi, sxs} = 1$ while
	$$\ang{\frac12(\xi\otimes1-1\otimes\xi)\#(x\otimes1-1\otimes x), s\otimes s} = 0.$$
\end{rem}

One might hope that in nice cases $A_\Xi$ is the free Stein kernel which attains the free Stein discrepancy of $X$, but unfortunately this holds if and only if $\Xi = 0$ (see Appendix~\ref{appendix:maikernel}).
However, we do obtain the following corollary:

\begin{cor}
The map
	\[
		L^2(M)^n\ominus \C^n\ni\Xi\mapsto \Sigma^*(X\mid \Xi)
	\]
is continuous.
\end{cor}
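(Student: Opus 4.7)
The plan is to exploit the explicit free Stein kernel provided by Proposition \ref{prop:Mai}, which depends linearly and continuously on $\Xi$. First I would observe that the map
\[
\Xi = (\xi_1,\ldots,\xi_n) \longmapsto A_\Xi = \left[\tfrac12(\xi_i\otimes 1 - 1\otimes\xi_i)\#(x_j\otimes 1 - 1\otimes x_j)\right]_{i,j=1}^n
\]
is linear in $\Xi$, and that each matrix entry is a sum of terms of the form $\xi_i x_j \otimes 1$, $\xi_i\otimes x_j$, $x_j\otimes\xi_i$, $1\otimes x_j\xi_i$ lying in $L^2(\mmop)$. Since $M$ acts boundedly on $L^2(M)$ (and similarly $\mmop$ on $L^2(\mmop)$), a straightforward estimate gives $\|A_\Xi\|_{\HS} \leq C\|\Xi\|_2$ for a constant $C$ depending only on $\|x_1\|,\ldots,\|x_n\|$. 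Thus $\Xi \mapsto A_\Xi$ is a bounded linear map from $L^2(M)^n \ominus \C^n$ into $M_n(L^2(\mmop))$.

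Next, I would invoke the identity $\Sigma^*(X\mid\Xi) = \|\Pi(A_\Xi) - \1\|_{\HS}$ noted after the definition of free Stein discrepancy, where $\Pi$ is the orthogonal projection onto the closure of the range of $\ev_X\circ\J$. Since $\Pi$ is a contraction and $A_\Xi$ depends linearly on $\Xi$, for any $\Xi_1, \Xi_2 \in L^2(M)^n \ominus \C^n$ the reverse triangle inequality gives
\[
\bigl|\Sigma^*(X\mid\Xi_1) - \Sigma^*(X\mid\Xi_2)\bigr| \leq \|\Pi(A_{\Xi_1} - A_{\Xi_2})\|_{\HS} \leq \|A_{\Xi_1 - \Xi_2}\|_{\HS} \leq C\|\Xi_1 - \Xi_2\|_2.
\]
This establishes Lipschitz continuity (and in particular continuity) of $\Xi \mapsto \Sigma^*(X\mid\Xi)$.

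There is no real obstacle here: the argument is routine once one recognizes that Proposition \ref{prop:Mai} gives a single globally defined, linearly varying free Stein kernel, so the continuity of the discrepancy reduces to the continuity of an orthogonal projection applied to a bounded linear function of $\Xi$. The only subtlety worth checking carefully is the boundedness estimate $\|A_\Xi\|_{\HS} \leq C\|\Xi\|_2$, which follows from the fact that each $x_j \in M$ acts as a bounded operator on $L^2(M)$.
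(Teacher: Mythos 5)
Your proof is correct and follows essentially the same approach as the paper: both rely on the identity $\Sigma^*(X\mid\Xi) = \|\Pi(A_\Xi) - \1\|_{\HS}$, the reverse triangle inequality, contractivity of $\Pi$, and the Lipschitz dependence of the Mai kernel $A_\Xi$ on $\Xi$. Your observation that $\Xi\mapsto A_\Xi$ is linear (so that $A_{\Xi_1}-A_{\Xi_2}=A_{\Xi_1-\Xi_2}$) is a slightly more explicit way to phrase the final estimate the paper states directly.
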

\begin{proof}
For $\Xi, \Xi'\in L^2(M)^n\ominus \C^n$ let $A_\Xi$ and $A_{\Xi'}$ be as in Proposition \ref{prop:Mai}. Then
	\begin{align*}
		| \Sigma^*(X\mid\Xi) - \Sigma^*(X\mid \Xi')| &=  \abs{ \|\Pi(A_\Xi) - \1\|_{\HS} - \|\Pi(A_{\Xi'}) - \1 \|_{\HS} }\\
			&\leq \| \Pi(A_\Xi) - \Pi(A_{\Xi'})\|_{\HS} \leq \|A_{\Xi} - A_{\Xi'}\|_{\HS} \leq C \|\Xi - \Xi'\|_2,
	\end{align*}
where $C>0$ is a constant depending only on $n$ and $X$.
\end{proof}

\begin{rem}
If $\Sigma^*(X\mid \Xi)=0$, then $\1$ is a free Stein kernel for $X$ and hence
	\[
		\<\Xi, \ev_X P\>_2 = \<\1, \ev_X\circ\J (P)\>_{\HS}\qquad \forall P\in \C\<X\>^n.
	\]
That is, $\Xi$ is the usual conjugate variable to $X$. In fact, this is precisely why the free Stein discrepancy is defined to measure the distance between a free Stein kernel $A$ and $\1$. We remind the reader that the \emph{free Fisher information} of $X$ is defined as the quantity
	\[
		\Phi^*(X):= \|\Xi\|_2^2
	\]
if $\Xi$ is the conjugate variable to $X$, whereas it is defined to be $+\infty$ if no conjugate variable exists (cf. \cite[Definition 6.1]{VoiV}).

Furthermore, $\Sigma^*(X\mid X)=0$ if and only if $X$ is the conjugate variable to $X$ if and only if $X$ is a free semicircular family.
\end{rem}


\section{Free Stein Irregularity.}
\label{sec:freesteinirreg}

We begin with the definition of free Stein irregularity. In order to better motivate and clarify the definition, it is followed by an examination of a special case.

\begin{defi}
	\label{def:free_Stein_irregularity}
	Let $X=(x_1,\ldots, x_n)\in M^n$ be a tuple of operators such that for each $i=1,\ldots, n$, $x_i^*=x_j$ for some $j$.
	Let $B$ be a unital $*$-subalgebra of $M$.
	The \textbf{free Stein irregularity} of $X$ over $B$ is the quantity
	\[
		\Sigma^*(X:B) := \inf\set{\Sigma^*(X \mid \Xi : B) : \Xi \in L^2(M)^n}.
	\]
For $R>0$, the \textbf{$R$-bounded free Stein irregularity} of $X$ over $B$ is the quantity
	\[
		\Sigma_R^*(X\colon B) := \inf\set{\Sigma^*(X \mid \Xi : B) : \Xi \in L^2(M)^n\text{ with }\|\Xi\|_2\leq R}.
	\]
Note that $\displaystyle \Sigma^*(X\colon B) = \inf_{R>0} \Sigma_R^*(X\colon B) = \lim_{R\to\infty} \Sigma^*_R(X\colon B)$. In the particular case $B = \C$, we will use the shorthand $\Sigma^*(X) := \Sigma^*(X:\C)$.
\end{defi}

\begin{rem}\label{rem:relativekernel}
Notice that if $B \subseteq C \subseteq M$, there are fewer free Stein kernels of $X$ over $B$ than over $C$ (as there are more polynomials and so more relations must be satisfied); it follows that $\Sigma^*(X : B) \leq \Sigma^*(X : C)$. More formally, if $\mc{E}\colon \mmop \to W^*(\BX \otimes \BX^\circ)$ is the trace-preserving conditional expectation onto the von Neumann algebra generated by $\BX\otimes \BX^\circ$, then the claimed inequality follows from the inclusion $\left(\mc{E}\otimes I_n\right)(\dom(\J_{X\colon C}^*))\subset \dom(\J_{X\colon B}^*)$. 	
\end{rem}

\begin{rem}\label{rem:scalarcase}
Consider the following special case: let $X=(x_1,\ldots, x_n)$ be an $n$-tuple of self-adjoint operators generating $M$. Assume that $x_1,\ldots, x_n$ are algebraically free so that $\ev_X\colon \C\<T\>\to \C\<X\>$ is a $*$-algebra isomorphism. This allows us to view the free difference quotients $\partial_j$, $j=1,\ldots, n$, as defined directly on $\C\<X\>$, and---moreover---as densely defined (unbounded) operators of the form
	\[
		\partial_j\colon L^2(M)\to L^2(\mmop).
	\]
	Similarly, $\partial$ and $\J$ may be regarded as maps densely defined on the appropriate Hilbert spaces.

In this context, a free Stein kernel $A$ of $X$ relative to some $\Xi$ is simply an element of $\dom(\J^*)$ with $\J^*(A)=\Xi$. Consequently, the \emph{free Stein irregularity}, which is given by the formula
	\[
		\Sigma^*(X):=\inf \{\Sigma^*(X\mid \Xi)\colon \Xi\in L^2(M)^n\},
	\]
(see Definition~\ref{def:free_Stein_irregularity}), is equivalently the distance between $\1$ and (the closure of) $\dom(\J^*)$ in $M_n(L^2(\mmop))$. The free Stein irregularity can be thought of as quantitative measurement of how close the $n$-tuple $X$ is to having conjugate variables. Indeed, capturing such a defect was the original motivation for defining this quantity and if we consider the following technical modification
	\[
		\Sigma_R^*(X):=\inf \{\Sigma^*(X\mid \Xi)\colon \Xi\in L^2(M)^n\text{ with }\|\Xi\|_2\leq R\},\qquad R>0
	\]
then $\Sigma_R^*(X)=0$ if and only if an $n$-tuple of conjugate variables to $X$ exists and is bounded by $R$ (see Theorem~\ref{thm:fisher_info_and_R-bounded_free_Stein_info}).

It turns out that the Hilbert subspace $\overline{\dom(\J^*)}\subset M_n(L^2(\mmop))$ is a left $\mmop$-module (see Lemma~\ref{lem:action_on_domain_of_adjoint}) and that its Murray--von Neumann dimension is related to the free Stein irregularity by the following formula:
	\[
		n- \Sigma^*(X)^2 = \dim_{\mmop}\left(\overline{\dom(\J^*)}\right)
	\]
(see Theorem~\ref{thm:free_Stein_info_is_dim}). We are thus compelled to study the quantity on the left-hand side, which we denote by $\sigma(X)$ and call the \emph{free Stein dimension} of $X$.
Analogously to free entropy dimension, it satisfies the inequality
	\[
		\sigma(X,Y)\leq \sigma(X)+\sigma(Y),
	\]
where $Y$ is another tuple of self-adjoint operators generating some (potentially larger) von Neumann algebra along with $X$; equality holds if $X$ and $Y$ are freely independent (see Corollary~\ref{cor:rel_additivity}).
It is also a $*$-algebra invariant (see Theorem~\ref{thm:algebra_inv}) and compares to the non-microstates free entropy dimensions:
	\[
		\sigma(X)\leq \delta^*(X) \leq \delta^*(X)
	\]
(see Corollary~\ref{cor:free_Stein_info_and_free_entropy_dim}). Moreover, it is known  to agree with these other dimensions in a number of cases (see Theorem~\ref{thm:one_variable_computation}, Proposition~\ref{prop:group_algebras}, and Corollary~\ref{cor:free_Stein_dimension_of_finite-dimensional_algebras}). In particular, when $n=1$ and $x$ is a self-adjoint operator with spectral measure $\mu_x$ we have
	\[
		\sigma(x) = 1 - \sum_{t\in \R} \mu_x(\{t\})^2.
	\]
It is thus natural to wonder whether these dimensions always agree. However, some basic relations still elude us. For example, when $\chi^*(X)>\infty$ it is known that $\delta^*(X)=\delta^*(X)=n$, but it remains open whether or not this implies $\sigma(X)=n$ as well. 
\end{rem}

\subsection{Elementary Properties.} 

We derive some useful properties of free Stein irregularity.

\begin{prop}
	$\Sigma^*(X:B) = \Sigma^*(X:W^*(B))$.
\end{prop}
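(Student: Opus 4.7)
The inequality $\Sigma^*(X:B) \leq \Sigma^*(X:W^*(B))$ is immediate from Remark~\ref{rem:relativekernel}, so the plan is to establish the reverse. I would show that the adjoint Jacobians actually coincide, in the sense that $\dom(\J_{X:B}^*) = \dom(\J_{X:W^*(B)}^*)$ with $\J_{X:B}^*(A) = \J_{X:W^*(B)}^*(A)$ on this common domain. Note that the ambient space $M_n(L^2(\BX \otimes \BX^\circ))$ is unchanged when $B$ is replaced by $W^*(B)$, since $W^*(\BX) = W^*(W^*(B)\ang{X})$. Once equality of the adjoints is established, the sets of free Stein kernels of $X$ relative to a given $\Xi$ coincide over $B$ and over $W^*(B)$, so the corresponding infimal distances to $\1$ agree; taking an infimum over $\Xi$ then finishes the argument.

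The containment $\dom(\J_{X:W^*(B)}^*) \subseteq \dom(\J_{X:B}^*)$ with matching values is immediate, since the defining identity on $W^*(B)\ang{T}^n$ restricts to the defining identity on $\BT^n$. For the reverse containment, suppose $A \in \dom(\J_{X:B}^*)$ with $\J_{X:B}^*(A) = \Xi$, so that
\[
\ang{\Xi, \ev_X(P)}_2 = \ang{A, \ev_X \circ \J_{X:B}(P)}_{\HS} \qquad \forall\, P \in \BT^n.
\]
The goal is to extend this identity to $P \in W^*(B)\ang{T}^n$. By linearity it suffices to treat a monomial $p = w_0 t_{i_1} w_1 \cdots t_{i_d} w_d$ (placed in a single coordinate of the tuple) with each $w_j \in W^*(B)$. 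Kaplansky density produces sequences $\{b_j^{(k)}\}_k \subset B$ with $\|b_j^{(k)}\| \leq \|w_j\|$ and $b_j^{(k)} \to w_j$ in SOT; setting $p_k$ to be the corresponding monomial in $\BT$, I would apply the known identity to $p_k$ and pass to the limit.

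The limit step requires $\ev_X(p_k) \to \ev_X(p)$ in $L^2(M)$ and $\ev_X \circ \J_{X:B}(p_k) \to \ev_X \circ \J_{X:W^*(B)}(p)$ in $M_n(L^2(\mmop))$. Both should reduce to the observation that if $a^{(k)}, c^{(k)} \in M$ are uniformly operator-norm bounded with $a^{(k)} \to a$ and $c^{(k)} \to c$ in SOT, then $a^{(k)} c^{(k)} \to ac$ in $L^2(M)$ and $a^{(k)} \otimes c^{(k)} \to a \otimes c$ in $L^2(\mmop)$ (the latter via $\|a \otimes c\|_2 = \|a\|_2 \|c\|_2$ and the triangle inequality). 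The main technical subtlety, and the reason to invoke Kaplansky rather than pure $L^2$-approximation, is that the bilinear tensor map $L^2(M) \times L^2(M) \to L^2(\mmop)$ is not jointly continuous without a uniform operator-norm bound; once that bound is in place, the rest of the argument is a routine unwinding of definitions.
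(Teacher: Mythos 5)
Your proof is correct and takes essentially the same approach as the paper: the reverse inequality is established by showing that any free Stein kernel over $B$ is automatically one over $W^*(B)$, via Kaplansky density to produce uniformly norm-bounded approximating sequences in $B$ and then passing to the limit in the defining identity. One small quibble with your stated rationale: the tensor map $L^2(M)\times L^2(M)\to L^2(\mmop)$ is in fact jointly continuous on $L^2$-bounded sets (since $\|a\otimes c\|_2=\|a\|_2\|c\|_2$ and convergent sequences are automatically $L^2$-bounded), so the uniform operator-norm bound supplied by Kaplansky is really needed to guarantee that the \emph{operator products} $b_0(k)x_{i_1}b_1(k)\cdots$ and the tensor-leg products converge in $L^2$, rather than for the tensor map itself.
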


\begin{proof}
	Denote $N = W^*(B)$.
	We have $\Sigma^*(X:B) \leq \Sigma^*(X:N)$ by Remark~\ref{rem:relativekernel}, so we need only establish the other inequality.
	Now, let us suppose that $A$ is a Stein kernel for $X$ relative to $\Xi$ over $B$.
	Fix $p = b_0t_{i_1}b_1\cdots t_{i_d}b_d \in N\<T\>$ with $b_0, \ldots, b_d \in N$ and for $j = 0, \ldots, d$ take a sequence $(b_j(k))_{k\in\N} \subset B$ converging strongly to $b_j$ with norms uniformly bounded by $\norm{b_j}$ (such exists by Kaplansky's density theorem).
	Then if we let
	\[p(k) := b_0(k)t_{i_1}b_1(k)\cdots t_{i_d}b_d(k) \in \BT,\]
	we find $\ev_X(p(k))$ converges to $\ev_X(p)$ in $L^2(M)$, while $\ev_X\circ\partial(p(k))$ converges to $\ev_X\circ\partial(p)$ in $L^2(\mmop)^n$.
	If $P \in N\<T\>^n$ and $P(k)$ are chosen in a similar way, it follows that
	\[
		\ang{\Xi, \ev_X P}
		= \lim_{k\to\infty}\ang{\Xi, \ev_X(P(k))}
		= \lim_{k\to\infty}\ang{A, \ev_X\circ\J(P(k))}
		= \ang{A, \ev_X\circ\J(P)};
	\]
	that is, $A$ is also a Stein kernel for $X$ relative to $\Xi$ over $N$.
	Hence $\Sigma^*(X:N) \leq \Sigma^*(X:B)$.
\end{proof}

\begin{lem}
	\label{lem:rel_free_inflation_v2}
	Let $B, C \subseteq M$ be unital $*$-subalgebras, and let $D \subseteq B \cap C$ be a common unital $*$-subalgebra with conditional expectation $\mc{E} : (B \vee C)\<X\> \to D$, where $B\vee C$ is the $*$-algebra generated by $B$ and $C$.
	If $C$ is free from $\BX$ with amalgamation over $D$, then
	\[
		\Sigma^*(X\colon B\vee C) = \Sigma^*(X\colon B).
	\]
	In particular, if $X$ is free from $C$, then $\Sigma^*(X\colon C) = \Sigma^*(X)$.
\end{lem}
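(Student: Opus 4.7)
My plan is to establish the two inequalities separately. The inequality $\Sigma^*(X\colon B) \leq \Sigma^*(X\colon B\vee C)$ is immediate from Remark~\ref{rem:relativekernel}, since $B \subseteq B \vee C$. For the reverse direction I would show that any free Stein kernel $A$ of $X$ relative to some $\Xi$ over $B$ remains a free Stein kernel of $X$ over $B\vee C$ relative to the same $\Xi$, once $\Xi \in L^2(\BX)^n$ and $A \in M_n(L^2(\BX\otimes\BX^\circ))$ are identified with their images in the corresponding $L^2$ spaces for $(B\vee C)\<X\>$. Since these inclusions are isometric and fix $\1$, this would give $\Sigma^*(X\colon B\vee C) \leq \|A-\1\|_{\HS}$ for every such $A$, and hence $\Sigma^*(X\colon B\vee C) \leq \Sigma^*(X\colon B)$.

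The central tool is the trace-preserving conditional expectation $E\colon M \to W^*(\BX)$ afforded by the freeness of $C$ from $\BX$ over $D$: it is a $W^*(\BX)$-bimodule map, it satisfies $E|_C = E_D$, and it annihilates centered alternating products in $C\cap\ker E_D$ and $W^*(\BX)\cap\ker E_D$. By Kaplansky density (as in the previous proposition) and linearity, it suffices to verify the kernel identity $\ang{\Xi,\ev_X P}_2 = \ang{A,\ev_X\circ\J(P)}_{\HS}$ on monomials $P\in(B\vee C)\<T\>^n$ whose coefficients are alternating products in $B$ and $C$. For such $P$, I would show the Hilbert--Schmidt side reduces, via the bimodule property and $E|_C = E_D$, to the corresponding pairing for the modified monomial $\tilde P \in \BT^n$ obtained by replacing each $C$-coefficient $c$ by $E_D(c) \in D \subseteq B$; this uses that $A\in M_n(L^2(\BX\otimes\BX^\circ))$ is fixed by $E\otimes E^\circ$. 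The $L^2$-side reduces analogously via $\tau = \tau\circ E$ and cyclicity, but produces correction terms of the shape $\ang{\beta,\Xi_j}_2$ with $\beta\in B$, arising from freeness cumulants of centered $C$-factors and $W^*(\BX)$-elements. These correction terms vanish because $\Xi\perp B^n$, which is forced by $A$ being a Stein kernel over $B$ (see the discussion following Proposition~\ref{prop:Mai}). Both sides of the identity then coincide by the original kernel condition for $A$ over $B$ applied to $\tilde P$.

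The main obstacle I anticipate is the combinatorial bookkeeping for monomials with several $C$-factors; my plan is to proceed inductively on the number of such factors, and to organize the correction terms using the standard moment--cumulant description of freeness with amalgamation, so that each correction ultimately factors as an inner product of an element of $B$ against $\Xi_j$ and is thereby killed by $\Xi\perp B^n$.

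For the ``In particular'' claim, I take $B = D = \C$, so that $B\vee C = C$, $\BX = \C\<X\>$, and the freeness of $X$ from $C$ coincides with the freeness of $C$ from $\BX$ over $\C$; the main identity then specializes to $\Sigma^*(X\colon C) = \Sigma^*(X\colon\C) = \Sigma^*(X)$.
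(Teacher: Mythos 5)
Your core strategy --- proving the nontrivial inequality $\Sigma^*(X\colon B\vee C) \leq \Sigma^*(X\colon B)$ by showing that any Stein kernel over $B$ remains one over $B\vee C$, reducing each monomial pairing to one where the $C$-coefficients have been replaced by their $D$-expectations via freeness with amalgamation --- is exactly what the paper does, there phrased as the inclusion $\dom(\partial^*_{X\colon B}) \subseteq \dom(\partial^*_{X\colon B\vee C})$. Two remarks on the details of your plan. First, the ``correction terms'' you anticipate on the $L^2$ side do not actually arise: if one organizes $P \in (B\vee C)\<T\>$ into alternating products $c_0 P_1 c_1 \cdots P_d c_d$ with $c_1,\dots,c_{d-1} \in C$ centered over $D$ and $P_1,\dots,P_d \in \BT$ with $\ev_X(P_i)$ centered, then amalgamated freeness forces \emph{both} sides of the kernel identity to vanish outright when $d \geq 2$, and when $d \leq 1$ both sides collapse cleanly to the corresponding pairing for $\mc{E}(c_0)P_1\mc{E}(c_1)$ with no remainder. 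The orthogonality $\Xi \perp B^n$ is indeed a consequence of $A$ being a kernel over $B$, but it plays no role here; if your intended cumulant bookkeeping produced terms needing that hypothesis, it would be a sign the decomposition should be reorganized as above. Second, Kaplansky density is not needed: $B\vee C$ is by definition the $*$-algebra, not the von Neumann algebra, generated by $B$ and $C$, so linear combinations of the alternating monomials above already exhaust $(B\vee C)\<T\>$.
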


\begin{proof}
	By Remark~\ref{rem:relativekernel}, it suffices to prove $\Sigma^*(X\colon B\vee C)\leq \Sigma^*(X\colon B)$. We will prove this by showing $\dom(\partial_{X:B}^*) \subset \dom(\partial^*_{X:B\vee C})$.
	Let $\eta \in \dom(\partial_{X:B}^*)$, with $\xi := \partial_{X:B}^*(\eta)$.
	Take $c_0, \ldots, c_d \in C$ with $\mc{E}(c_i) = 0$ for $i = 1, \ldots, d-1$, and $P_1, \ldots, P_d \in \BT$ with $\mc{E}(\ev_X P_i) = 0$ for $i = 1, \ldots, d$; set $P = c_0P_1c_1\cdots P_dc_d$.

	We claim
	\[
		\ang{\xi, \ev_X P}_2 = \ang{\eta, \ev_X\circ\partial_{X: B\vee C}P}_2.
	\]
	If $d = 0$, the left-hand side is $\ang{\xi, \mc{E}(c_0)} = 0$ since $\xi$ is orthogonal to $D \subseteq B$ and free from $C$ with amalgamation.
	The right-hand side is zero by the definition of $\partial_{X:B\vee C}$.
	If $d \geq 2$, it is not hard to check that both sides are zero due to freeness with amalgamation over $D$.
	Thus it remains to establish the claim when $d = 1$.
	In this case, invoking freeness with amalgamation, we have:
	\begin{align*}
		\ang{\xi, \ev_X(c_0 P_1 c_1)}_2
		&= \ang{\xi, c_0 (\ev_X P_1) c_1}_2 \\
		&= \ang{ \xi, \mc{E}(c_0) (\ev_X P_1) \mc{E}(c_1)}_2\\
		&= \ang{ \xi, \ev_X(\mc{E}(c_0) P_1 \mc{E}(c_1))}_2\\
		&= \ang{\eta, \ev_X\circ\partial_{X\colon B}(\mc{E}(c_0) P_1 \mc{E}(c_1))}_2 \\
		&= \ang{ \eta, \mc{E}(c_0)\cdot \ev_X\circ\partial_{X\colon B}(P_1)\cdot \mc{E}(c_1)}_2\\
		&= \ang{ \eta, \mc{E}(c_0)\cdot \ev_X\circ\partial_{X\colon B\vee C}(P_1)\cdot \mc{E}(c_1)}_2\\
		&= \ang{ \eta, c_0 \cdot \ev_X\circ\partial_{X\colon B\vee C}(P_1)\cdot c_1}_2 \\
		&= \ang{ \eta, \ev_X\circ\partial_{X\colon B\vee C}(c_0 P_1 c_1)}_2.
	\end{align*}
	This completes the proof of the claim.

	Finally, since such elements $P$ span $(B\vee C)\<T\>$, this shows that $\eta \in \dom(\partial^*_{X:B\vee C})$, completing the proof.
\end{proof}

\begin{thm}\label{thm:rel_subadditivity}
	Let $B, C \subset M$ be unital $*$-subalgebras, and let $X \in M^n$, $Y\in M^m$ be tuples.
	Then
	\[
		 \Sigma^*(X\colon B)^2 + \Sigma^*(Y\colon C)^2 \leq \Sigma^*(X,Y\colon B\vee C)^2 \leq  \Sigma^*(X\colon (B\vee C)\<Y\>)^2 + \Sigma^*(Y\colon (B\vee C)\<X\>)^2.
	\]
	Moreover, suppose $D\subset B\cap C$ is a common unital $*$-subalgebra with conditional expectation $\mc{E}\colon (B\vee C)[X\cup Y]\to D$. If $\BX$ and $C\<Y\>$ are free with amalgamation over $D$, then the above inequalities are equalities.
\end{thm}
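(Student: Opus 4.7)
The plan is to identify $\Sigma^*(X,Y : B\vee C)^2$ with the squared distance from $\1_{n+m}$ to $\overline{\dom(\J_{(X,Y):B\vee C}^*)}$ in $M_{n+m}(L^2(\mmop))$, and similarly for the other quantities; then for the first inequality I would carve a Stein kernel for $(X,Y)$ into pieces that serve as Stein kernels for $X$ over $B$ and for $Y$ over $C$, and for the second I would assemble two Stein kernels into a single block-diagonal Stein kernel for the joint tuple.

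\textbf{First inequality.} Let $A \in \dom(\J_{(X,Y):B\vee C}^*)$ be a free Stein kernel with partial conjugate variable $\Xi = (\Xi_X,\Xi_Y) \in L^2(M)^{n+m}$. Write $A$ in block form
\[
A = \begin{pmatrix} A_{11} & A_{12} \\ A_{21} & A_{22} \end{pmatrix},
\]
with $A_{11}$ of size $n\times n$ and $A_{22}$ of size $m\times m$. Let $S = (s_1,\dots,s_n)$ and $R = (r_1,\dots,r_m)$ be indeterminates for $X$ and $Y$ respectively. For any $P' \in B\<S\>^n$ the polynomial $P = (P',0) \in (B\vee C)\<S,R\>^{n+m}$ satisfies $\partial_{r_k}P_i = 0$ and $\partial_{s_j}P_i$ lies in $B\<S\>\otimes B\<S\>^\circ$, so testing the Stein identity for $A$ against $P$ gives
\[
\ang{\Xi_X,\ev_X P'}_2 = \ang{A_{11}, \ev_X\circ\J_{X:B}(P')}_{\HS}.
\]
Applying the trace-preserving conditional expectation from $\mmop$ onto $W^*(\BX\otimes\BX^\circ)$ entrywise to $A_{11}$ gives a free Stein kernel for $X$ over $B$ (with partial conjugate $\Xi_X$); since $\1_n$ lies in the range of this conditional expectation and conditional expectations are contractions in $\|\cdot\|_{\HS}$,
\[
\Sigma^*(X:B)^2 \leq \|A_{11}-\1_n\|_{\HS}^2,
\]
and symmetrically $\Sigma^*(Y:C)^2 \leq \|A_{22}-\1_m\|_{\HS}^2$. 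Adding and using the Pythagorean decomposition of $\|A-\1_{n+m}\|_{\HS}^2$ into the four blocks yields
\[
\Sigma^*(X:B)^2 + \Sigma^*(Y:C)^2 \leq \|A-\1_{n+m}\|_{\HS}^2,
\]
and taking the infimum over $A$ gives the first inequality.

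\textbf{Second inequality.} Conversely, let $A_X \in \dom(\J_{X:(B\vee C)\<Y\>}^*)$ and $A_Y\in \dom(\J_{Y:(B\vee C)\<X\>}^*)$ be Stein kernels with partial conjugate variables $\Xi_X,\Xi_Y$. I will show that the block-diagonal matrix $A = \mathrm{diag}(A_X,A_Y)$ is a free Stein kernel for $(X,Y)$ over $B\vee C$ with partial conjugate $(\Xi_X,\Xi_Y)$. The computation rests on the observation that for any $P_i \in (B\vee C)\<S,R\>$, the free difference quotient $\partial_{s_j}P_i$ taken in this ambient algebra evaluates identically to the free difference quotient of $\ev_Y P_i \in (B\vee C)\<Y\>\<S\>$ taken with $(B\vee C)\<Y\>$ as the base algebra --- both derivations satisfy the Leibniz rule, agree on generators, and vanish on $(B\vee C)\<Y\>$. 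Testing the joint Stein identity separately against test tuples $(P',0)$ and $(0,P'')$, the mixed blocks $A_{12},A_{21}$ contribute nothing and one recovers the Stein identities for $A_X$ and $A_Y$, respectively. Therefore
\[
\Sigma^*(X,Y:B\vee C)^2 \leq \|A-\1_{n+m}\|_{\HS}^2 = \|A_X-\1_n\|_{\HS}^2 + \|A_Y - \1_m\|_{\HS}^2,
\]
and taking infima over $A_X,A_Y$ gives the second inequality.

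\textbf{Equality under freeness with amalgamation.} With the hypothesis that $\BX$ and $C\<Y\>$ are free with amalgamation over $D$, apply Lemma~\ref{lem:rel_free_inflation_v2}: with $B$ playing the role of the base algebra and $C\<Y\>$ playing the role of the freely independent subalgebra, and using the identification $(B\vee C)\<Y\> = B\vee C\<Y\>$, we obtain
\[
\Sigma^*(X:(B\vee C)\<Y\>) = \Sigma^*(X:B),
\]
and symmetrically $\Sigma^*(Y:(B\vee C)\<X\>) = \Sigma^*(Y:C)$. The chain of inequalities established above then collapses to equality throughout. The main technical point is verifying the identification of free difference quotients when passing between the joint Jacobian $\J_{(X,Y):B\vee C}$ and the single-variable Jacobians $\J_{X:(B\vee C)\<Y\>}$ and $\J_{Y:(B\vee C)\<X\>}$; beyond this bookkeeping, the argument is a straightforward Pythagorean decomposition in $M_{n+m}(L^2(\mmop))$.
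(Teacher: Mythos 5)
Your proof is correct and follows essentially the same route as the paper's: for the first inequality you extract the diagonal blocks and push them through the conditional expectation onto $L^2(\BX\otimes\BX^\circ)$ (resp.\ $L^2(C\<Y\>\otimes C\<Y\>^\circ)$) exactly as the paper does (the paper phrases this as an entry-wise projection), for the second you assemble the block-diagonal kernel and verify the change-of-base-algebra identity for the free difference quotients (which the paper leaves as ``easily checked''), and for the equality case you invoke Lemma~\ref{lem:rel_free_inflation_v2} with the identification $(B\vee C)\<Y\> = B\vee(C\<Y\>)$ just as the paper does.
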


\begin{proof}
	Let $A\in \dom(\J^*_{(X,Y)\colon B\vee C})$.
	Let $A_1$ be the entry-wise projection of the top-left $n\times n$ sub-matrix of $A$ onto $L^2(\BX\otimes \BX^\circ)$, and let $A_2$ be the entry-wise projection of the bottom-right $m\times m$ sub-matrix of $A$ onto $L^2(C\<Y\>\otimes C\<Y\>^\circ)$.
	One easily checks that $A_1\in \dom(\J_{X\colon B}^*)$ and $A_2\in \dom(\J_{Y\colon C}^*)$.
	Hence
	\[
		\Sigma^*(X\colon B)^2 + \Sigma^*(Y\colon C)^2 \leq \|A_1 - \1 \|_{\HS}^2  + \| A_2 - \1 \|_{\HS}^2 \leq \|A - \1 \|_{\HS}^2.
	\]
	Since $A\in \dom(\J_{(X,Y)\colon B\vee C}^*)$ was arbitrary, this yields the first inequality.

	Next, let $A_1\in \dom(\J_{X\colon (B\vee C)\<Y\>}^*)$ and $A_2\in \dom(\J_{Y\colon (B\vee C)\<X\>}^*)$.
	It is easily checked that
	\[
		A = \left[ \begin{array}{cc} A_1 & 0 \\ 0 & A_2 \end{array}\right] \in \dom(\J_{(X,Y)\colon B\vee C}^*).
	\]
	Thus
	\[
		\Sigma^*(X,Y\colon B\vee C)^2 \leq \| A - \1 \|_{\HS}^2 = \|A_1 - \1 \|_{\HS}^2 + \|A_2 - \1 \|_{\HS}^2,
	\]
	and so the second inequality follows. 

	Finally, if $\BX$ and $C\<Y\>$ are free with amalgamation over $D$, then by Lemma~\ref{lem:rel_free_inflation_v2} we have
		\[
			\Sigma^*(X\colon (B\vee C)\<Y\>) = \Sigma^*(X \colon B\vee (C\<Y\>)) = \Sigma^*(X\colon B).
		\]
	Similarly, $\Sigma^*(Y\colon (B\vee C)\<X\>) = \Sigma^*(Y\colon C)$.
	This forces the claimed equality.
\end{proof}

Applying the previous theorem to the special case $B=C=D(=\C)$, yields the following corollary.

\begin{cor}\label{cor:rel_additivity}
	\begin{enumerate}
		\item[]
		\item If $\BX$ and $B\<Y\>$ are free with amalgamation over B, then
			\[
				\Sigma^*(X,Y\colon B)^2 = \Sigma^*(X\colon B)^2 +\Sigma^*(Y\colon B)^2.
			\]

		\item If $X$ and $Y$ are free, then
			\[
				\Sigma^*(X,Y)^2 = \Sigma^*(X)^2+\Sigma^*(Y)^2.
			\]
	\end{enumerate}
\end{cor}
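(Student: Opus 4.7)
The plan is to derive both statements as direct specializations of Theorem~\ref{thm:rel_subadditivity}; the substantive work has already been done there (and in Lemma~\ref{lem:rel_free_inflation_v2}), so there is little left beyond matching hypotheses and simplifying notation.

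For part (1), I would take $C = B$ and $D = B$ in Theorem~\ref{thm:rel_subadditivity}. Then $B \vee C = B$ and $D \subseteq B \cap C$ trivially; the conditional expectation $\mc{E}\colon B[X\cup Y] \to B$ required by the statement is just the natural trace-preserving projection (which exists in our tracial setting). The hypothesis of the ``moreover'' clause collapses to exactly the hypothesis of part~(1), namely that $\BX$ and $B\<Y\>$ are free with amalgamation over $B$. The equality conclusion of that clause then reads
$$\Sigma^*(X\colon B)^2 + \Sigma^*(Y\colon B)^2 = \Sigma^*(X, Y\colon B)^2,$$
which is the desired identity.

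For part (2), I would further specialize to $B = C = D = \C$. By definition, freeness of $X$ and $Y$ is the same as freeness with amalgamation of $\C\<X\>$ and $\C\<Y\>$ over $\C$, so the ``moreover'' hypothesis is satisfied. Unpacking the convention $\Sigma^*(\cdot) = \Sigma^*(\cdot\colon\C)$ from Definition~\ref{def:free_Stein_irregularity} then gives
$$\Sigma^*(X, Y)^2 = \Sigma^*(X)^2 + \Sigma^*(Y)^2.$$

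I do not foresee a genuine obstacle, since this corollary is nothing more than a restatement of Theorem~\ref{thm:rel_subadditivity} in the convenient special case where the two ``base'' algebras coincide. The one nontrivial step—showing that, under freeness with amalgamation, enlarging the scalar algebra by $Y$ does not create any new Stein kernels for $X$, and symmetrically for $Y$—is precisely the content of Lemma~\ref{lem:rel_free_inflation_v2}, which has already been invoked to force the outer inequalities of Theorem~\ref{thm:rel_subadditivity} to become equalities. Thus the only task here is bookkeeping.
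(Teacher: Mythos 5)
Your argument is exactly the paper's: the authors also deduce the corollary by specializing Theorem~\ref{thm:rel_subadditivity} to $B = C = D$, and then further to $B = C = D = \C$ for part (2). The bookkeeping you carry out is correct, so this is the same proof.
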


\begin{prop}
The function $R\mapsto \Sigma^*_R(X\colon B)$ is convex.
\end{prop}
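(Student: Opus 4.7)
The plan is to establish convexity directly from the definitions by exploiting the linearity of $\J^*$ in the Stein kernel and the linearity of the $\Xi$-constraint in passing from $\Xi$ to its convex combinations. Concretely, fix $R_0, R_1 > 0$ and $t \in [0,1]$, and set $R_t = (1-t)R_0 + tR_1$. I would show
\[
\Sigma^*_{R_t}(X : B) \leq (1-t)\Sigma^*_{R_0}(X:B) + t \Sigma^*_{R_1}(X:B),
\]
by selecting near-optimal data at $R_0$ and $R_1$ and convex-combining both the partial conjugate variables and their Stein kernels.

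In detail, I would first fix $\varepsilon > 0$ and choose $\Xi_i \in L^2(M)^n$ with $\|\Xi_i\|_2 \leq R_i$ along with Stein kernels $A_i$ for $X$ relative to $\Xi_i$ over $B$ such that $\|A_i - \1\|_{\HS} \leq \Sigma^*_{R_i}(X:B) + \varepsilon$ for $i = 0, 1$. Next I would form the convex combinations $\Xi_t = (1-t)\Xi_0 + t\Xi_1$ and $A_t = (1-t)A_0 + tA_1$. Because $\J^*_{X:B}$ is linear on its domain, $A_t$ is a Stein kernel for $X$ relative to $\Xi_t$ over $B$; moreover, by the triangle inequality $\|\Xi_t\|_2 \leq R_t$, so $\Xi_t$ is admissible in the infimum defining $\Sigma^*_{R_t}(X:B)$. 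Applying the triangle inequality in $M_n(L^2(\mmop))$ to $A_t - \1 = (1-t)(A_0 - \1) + t(A_1 - \1)$ yields
\[
\Sigma^*_{R_t}(X:B) \leq \Sigma^*(X \mid \Xi_t : B) \leq \|A_t - \1\|_{\HS} \leq (1-t)\|A_0-\1\|_{\HS} + t\|A_1-\1\|_{\HS},
\]
and letting $\varepsilon \to 0$ gives the desired inequality.

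There is no real obstacle here; the argument is essentially a two-line convexity check once one observes two structural features, namely that Stein kernels and their associated partial conjugate variables behave linearly with respect to one another, and that the $R$-constraint is a norm bound which is itself convex. The only mild subtlety worth flagging explicitly is that convex-combining two Stein kernels yields a Stein kernel for the correspondingly combined $\Xi$, which follows immediately from the linearity of $\ev_X \circ \J$.
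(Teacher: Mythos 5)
Your proof is correct and follows essentially the same approach as the paper: convex-combine near-optimal Stein kernels, use linearity of $\J^*_{X:B}$ to verify the convex combination is again a Stein kernel with the appropriately bounded partial conjugate variable, then apply the triangle inequality. The paper takes the infimum over all admissible kernels directly rather than choosing $\varepsilon$-near-optimal ones and passing to the limit, but this is only a cosmetic difference.
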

\begin{proof}
Let $0<R_1<R_2$. Let $A_1,A_2\in \dom(\J_{X\colon B}^*)$ with $\|\J_{X\colon B}^*(A_i)\|_2 \leq R_i$, $i=1,2,$. Then for $t\in [0,1]$, $(1-t)A_1 + t A_2\in \dom(\J_{X\colon B}^*)$ with
	\[
		\| \J_{X\colon B}^*( (1-t)A_1+ tA_2)\|_2 \leq (1-t) \|\J_{X\colon B}^*(A_1)\|_2 + t \|\J_{X\colon B}^*(A_2)\|_2 \leq (1-t) R_1+ tR_2.
	\]
Hence
	\[
		\Sigma_{(1-t)R_1+tR_2}^*(X\colon B) \leq \| (1-t)A_1+ tA_2 - \1\|_{\HS} \leq (1-t) \|A_1 - \1\|_{\HS} + t\|A_2 - \1\|_{\HS}.
	\]
Taking the infimum over $A_1$ and $A_2$ completes the proof.
\end{proof}

\subsection{Free Stein dimension.}

In this subsection we give a characterization of the free Stein irregularity in terms of the Murray--von Neumann dimension of the closure of $\dom(\J_{X\colon B}^*)$ in $M_n(L^2(\mmop))$, viewed as a left $\mmop$-module.
We first show, in the following lemma, that $\dom(\partial_{X\colon B}^*)$ admits a left $\BX\otimes \BX^\circ$ action; this is the multivariate analogue of \cite[Proposition 4.1]{VoiV} and follows by an identical proof.

\begin{lem}\label{lem:action_on_domain_of_adjoint}
For $\eta=(\eta_1,\ldots, \eta_n)\in \dom(\partial_{X\colon B}^*)$ and $p,q\in \BT$, $\ev_X(p\otimes q)\# \eta \in \dom(\partial_{X\colon B}^*)$ with
	\begin{align*}
		\partial_{X\colon B}^*(\ev_X(p\otimes q)\# \eta) &=  \ev_X(p\otimes q) \# \partial_{X\colon B}^*(\eta)\\
		 &\qquad - \sum_{j=1}^n (1\otimes \tau^\circ)\left( \ev_Xp\cdot [\eta_j\#\ev_X\circ \partial_j(q^*)^*]\right) - (\tau\otimes 1)\left( [\eta_j\#\ev_X\circ\partial_j(p^*)^*]\cdot \ev_Xq \right).
	\end{align*}
\end{lem}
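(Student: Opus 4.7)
The plan is to verify the defining adjoint identity directly: by definition of $\partial^*_{X:B}$, the lemma reduces to showing that for every $r\in\BT$,
\[
\sum_{i=1}^n \ang{\ev_X(p\otimes q)\#\eta_i, \ev_X\circ\partial_i(r)}_2 = \ang{\Psi, \ev_X r}_2,
\]
where $\Psi$ denotes the right-hand side of the claimed formula. Establishing this identity simultaneously proves $\ev_X(p\otimes q)\#\eta\in\dom(\partial^*_{X:B})$ and identifies its image as $\Psi$, so there is no separate domain-membership argument to carry out beyond the calculation itself.

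The heart of the argument is a double application of the Leibniz rule for $\partial_i$, which yields
\[
\partial_i(p^* r q^*) = (1\otimes rq^*)\#\partial_i(p^*) + (p^*\otimes q^*)\#\partial_i(r) + (p^* r\otimes 1)\#\partial_i(q^*).
\]
Moving $\ev_X(p\otimes q)$ to the other side of each inner product rewrites the left-hand side as $\sum_i \ang{\eta_i, \ev_X((p^*\otimes q^*)\#\partial_i r)}_2$; solving the Leibniz identity for $(p^*\otimes q^*)\#\partial_i r$ and substituting produces three pieces, which I would analyze in turn. The first piece, summed over $i$, uses $\eta\in\dom(\partial^*_{X:B})$ to give $\ang{\partial^*_{X:B}\eta, \ev_X(p^* r q^*)}_2$, and traciality of $\tau$ then rewrites this as $\ang{\ev_X(p\otimes q)\#\partial^*_{X:B}\eta, \ev_X r}_2$, recovering the main summand. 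For the other two pieces, I would expand $\partial_i(p^*)$ and $\partial_i(q^*)$ as finite sums of elementary tensors, evaluate the resulting $(\tau\otimes\tau^\circ)$-pairings with $\eta_j$, and cyclically rearrange using traciality until the expressions line up with the two slice-map terms $(\tau\otimes 1)(\eta_j\#\ev_X\circ\partial_j(p^*)^*\cdot\ev_X q)$ and $(1\otimes\tau^\circ)(\ev_X p\cdot\eta_j\#\ev_X\circ\partial_j(q^*)^*)$ paired against $\ev_X r$.

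The main obstacle is notational rather than conceptual: one must simultaneously juggle the use of $\#$ for the algebra product in $\mmop$, for the left and right actions of $\mmop$ on $L^2(\mmop)$, and for the action of $\mmop$ on $L^2(M)$; the involution on $\BT\otimes\BT^\circ$; and the two slice-map conventions $1\otimes\tau^\circ$ and $\tau\otimes 1$. Fortunately, no analytic subtleties arise, since $p, q, r \in \BT$ and the tensors $\partial_i(p^*), \partial_i(q^*) \in \BT\otimes\BT^\circ$ are bounded, so after evaluation by $\ev_X$ every pairing is a finite sum of traces. As the authors note, the computation is an essentially identical componentwise adaptation of Voiculescu's single-variable argument in \cite[Proposition 4.1]{VoiV}.
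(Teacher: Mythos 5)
Your proposal is correct and takes essentially the same route as the paper: the paper itself gives no computation but simply remarks that the statement "is the multivariate analogue of \cite[Proposition 4.1]{VoiV} and follows by an identical proof," and your plan is precisely that Leibniz-rule calculation carried out component-by-component. The reduction to verifying $\sum_i \langle \ev_X(p\otimes q)\#\eta_i, \ev_X\circ\partial_i(r)\rangle_2 = \langle\Psi,\ev_X r\rangle_2$, the three-term Leibniz expansion of $\partial_i(p^*rq^*)$, and the slice-map/traciality bookkeeping for the two correction terms all check out (I verified both slice-map terms match upon expanding $\partial_i(p^*)$ and $\partial_i(q^*)$ into elementary tensors).
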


From this lemma we see that $\dom(\partial_{X\colon B}^*)$ is invariant under the left action of $\BX\otimes \BX^\circ$. Consequently, the Kaplansky density theorem implies that $\overline{\dom(\partial_{X\colon B}^*)}$ is a closed, left $\mmop$-module. Observe that for $A\in \dom(\J^*_{X\colon B})$, if $A_i=(A_{i1},\ldots, A_{in})$ (i.e. the $i$-th row of $A$) for $i=1,\ldots, n$, then $A_1,\ldots, A_n\in \dom(\partial_{X\colon B}^*)$. It then follows that $\overline{\dom(\J_{X\colon B}^*)}$ is also a closed, left $M_n(\mmop)$-module satisfying $\overline{\dom(\J_{X\colon B}^*)}\cong \overline{\dom(\partial_{X\colon B}^*)}^n$. This identification immediately gives the second equality in the following theorem.

\begin{thm}\label{thm:free_Stein_info_is_dim}
For $B\subset M$ a unital $*$-subalgebra and $X=(x_1,\ldots, x_n)\in M^n$  such that $M=W^*(\BX)$,
	\[
		n-\Sigma^*(X\colon B)^2 = \dim_{\mmop}\overline{\dom(\partial_{X\colon B}^*)}=\dim_{M_n(\mmop)}\overline{\dom(\J_{X\colon B}^*)}.
	\]
\end{thm}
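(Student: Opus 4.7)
The plan is to reinterpret $\Sigma^*(X\colon B)$ as a Hilbert space distance from $\1$ to $W := \overline{\dom(\J_{X\colon B}^*)}$, and then to compute that distance via the trace of a projection. The second equality in the statement is already handled by the paragraph preceding the theorem: the identification $\overline{\dom(\J_{X\colon B}^*)}\cong \overline{\dom(\partial_{X\colon B}^*)}^n$ as closed left $M_n(\mmop)$-modules immediately yields the equality of Murray--von Neumann dimensions, so my focus will be on the first equality.

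First I would observe that the set of free Stein kernels of $X$ over $B$, as $\Xi$ ranges over $L^2(M)^n$, is exactly $\dom(\J_{X\colon B}^*)$, with the corresponding partial conjugate variable being $\J_{X\colon B}^*(A)$. Swapping the order of the two infima in the definition of $\Sigma^*(X\colon B)$ then gives
\[
\Sigma^*(X\colon B) = \inf\set{\|A-\1\|_{\HS} : A \in \dom(\J_{X\colon B}^*)},
\]
which is precisely the distance in $M_n(L^2(\mmop))$ from $\1$ to the closed left $M_n(\mmop)$-submodule $W$. Letting $P$ denote the orthogonal projection onto $W$ and noting $\|\1\|_{\HS}^2 = n$, Pythagoras gives
\[
\Sigma^*(X\colon B)^2 = \|\1-P\1\|_{\HS}^2 = n - \|P\1\|_{\HS}^2.
\]

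To finish, I would identify $\|P\1\|_{\HS}^2$ with the Murray--von Neumann dimension of $W$. Identifying $M_n(L^2(\mmop))$ with the standard Hilbert module $L^2(M_n(\mmop), \Tr_n\otimes\tau\otimes\tau^\circ)$ (so that $\1$ is the trace vector of square-norm $n$ and the $\HS$ inner product is the $L^2$ inner product with respect to this unnormalized trace), the standard correspondence between closed left submodules of $L^2$ of a finite von Neumann algebra and projections in its right commutant produces a unique projection $q\in M_n(\mmop)$ with $W = L^2(M_n(\mmop))\cdot q$. Then $P\1 = \1\cdot q = q$ as vectors in $L^2$, and hence
\[
\|P\1\|_{\HS}^2 = (\Tr_n\otimes\tau\otimes\tau^\circ)(q^*q) = (\Tr_n\otimes\tau\otimes\tau^\circ)(q) = \dim_{M_n(\mmop)}W,
\]
which combined with the previous step gives the first equality.

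The only real obstacle is bookkeeping of trace normalizations on $M_n$: the Hilbert--Schmidt inner product corresponds to the unnormalized trace $\Tr_n$, so one must use this (rather than $\tfrac{1}{n}\Tr_n$) when computing the dimension in order for the formula $\|P\1\|_{\HS}^2 = \dim_{M_n(\mmop)}W$ to hold on the nose; under this convention the two dimension expressions in the theorem are already known to agree via the row-decomposition from the preceding paragraph. Apart from this normalization check, the argument is a direct application of the standard Hilbert-module theory of tracial finite von Neumann algebras, with Lemma~\ref{lem:action_on_domain_of_adjoint} (plus Kaplansky density, as already invoked) supplying the left $M_n(\mmop)$-module structure on $W$ that the above argument requires.
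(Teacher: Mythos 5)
Your argument is correct, and while it arrives at the same Pythagorean reduction as the paper (namely $n - \Sigma^*(X\colon B)^2 = \|P\1\|_{\HS}^2$), the way you identify $\|P\1\|_{\HS}^2$ with a Murray--von Neumann dimension is genuinely different. The paper attacks the $\dim_{\mmop}$ formula directly: it introduces the projection $f\in M_n(N)$ (with $N=(\mmop)'$) onto $\overline{\dom(\partial_{X\colon B}^*)}\subset L^2(\mmop)^n$, and then proves by an explicit index-juggling computation that the transpose $f^T$, regarded inside $M_n(L^2(\mmop))$, equals the vector $e=P\1$, whence $\|e\|_{\HS}^2 = \|f\|_{\HS}^2 = \dim_\mmop(\overline{\dom(\partial^*_{X\colon B})})$. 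You instead attack the $\dim_{M_n(\mmop)}$ formula directly, invoking the classical correspondence between closed left submodules of $L^2(\mathcal{N})$ and projections $q\in\mathcal{N}$ acting by right multiplication, so that $P\1=q$ is immediate and $\|P\1\|_{\HS}^2 = (\Tr_n\otimes\tau\otimes\tau^\circ)(q)$ requires no hand computation; this sidesteps the paper's $e=f^T$ verification entirely. The trade-off is that the paper's primary computation lands on the convention-independent quantity $\dim_\mmop$, while yours lands on $\dim_{M_n(\mmop)}$ for which (as you correctly flag) one must use the unnormalized trace $\Tr_n\otimes\tau\otimes\tau^\circ$ to make the formula hold on the nose -- with the usual normalized-trace convention this number would be $\tfrac1n$ as large. (The paper's own second equality suffers from the same normalization ambiguity, which it does not spell out either, and the rest of the paper only ever uses the $\dim_\mmop$ formulation.) One small caution: your closing claim that the two dimension expressions ``are already known to agree via the row-decomposition'' is doing real work -- one still needs to check that the row-decomposition $\overline{\dom(\J_{X\colon B}^*)}\cong\overline{\dom(\partial_{X\colon B}^*)}^n$ converts $\dim_{M_n(\mmop)}$ to $\dim_{\mmop}$ under your chosen normalization -- but this is a routine compression computation and poses no obstacle.
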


\begin{proof}
Let $e\in M_n(L^2(\mmop))$ be the projection of $\1$ onto $\overline{\dom(\J_{X\colon B}^*)}$ so that $\Sigma^*(X\colon B)=\| e - \1\|_{\HS}$. Hence
	\[
		n - \Sigma^*(X\colon B)^2 = n - \|e\|_{\HS}^2 + 2 \Re\<e,\1\>_{\HS} - \|\1\|_{\HS}^2 = \|e\|_{\HS}^2.
	\]
Now, identify $\mmop$ with its diagonal representation $\mmop\otimes I_n$ on $L^2(\mmop)^n$.
Then $N:=(\mmop)'\cap \B(L^2(\mmop))$ is identified with $M_n(N)$.
Observe that 
	\begin{align*}
		\overline{\dom(\J_{X\colon B}^*)} &\cong \overline{\dom(\partial_{X\colon B}^*)}^n\\
		&\cong \overline{\left\{(Tv_1,\ldots, Tv_n)\in M_n(L^2(\mmop))\colon T\in M_n(N),\ T\cdot L^2(\mmop)^n \subset \overline{\dom(\partial_{X\colon B}^*)}\right\}},
	\end{align*}
where $v_j\in L^2(\mmop)^n$ is the vector with $1\otimes 1$ in the $j$-th entry and zeros elsewhere.
In fact, $(Tv_1,\ldots, Tv_n)$ in the last space is sent to its transpose in the first space.
Let $f$ be the projection of $L^2(\mmop)^n$ onto $\overline{\dom(\partial_{X\colon B}^*)}$, so that $f \in M_n(N)$; then $f^T\in \overline{\dom(\J_{X\colon B}^*)}$, and we further claim that $f^T=e$.
Indeed, for $A\in \dom(\J_{X\colon B}^*)$ let  $A_1,\ldots, A_n\in \dom(\partial_{X\colon B}^*)$ be the rows of $A$ as in the discussion preceding the theorem.
Hence $fA_i=A_i$ and so
	\begin{align*}
		\<f^T,A\>_{\HS} &= \< \1, f^T A\>_{\HS} = \sum_{i,j=1}^n \< 1\otimes 1, [f^T]_{ji} [A]_{ij}\>_{\HS}\\
		& = \sum_{i,j=1}^n \< 1\otimes 1, [f]_{ij} [A]_{ij}\>_{\HS} = \sum_{i,j=1}^n \< 1\otimes 1 , [f]_{ij} (A_i)_j \>_{\HS}\\
		&= \sum_{i=1}^n \<  1\otimes 1, (f A_i)_i\>_{\HS} = \sum_{i=1}^n \<  1\otimes 1, (A_i)_i\>_{\HS}\\
		&= \sum_{i=1}^n \< 1\otimes 1, [A]_{ii}\>_{\HS} = \<\1, A\>_{\HS} = \<e, A\>_{\HS}.
	\end{align*}
Thus $f^T = e$ and
	\[
		\dim_{\mmop}(\overline{\dom(\partial_{X\colon B}^*)}) = \|f\|_{\HS}^2 = \|f^T\|_{\HS}^2=\|e\|_{\HS}^2.
	\]
So the result follows by our previous computation.
\end{proof}

In light of the above theorem, we make the following definition.

\begin{defi}\label{def:free_Stein_dim}
For an $n$-tuple $X$, the \textbf{free Stein dimension} of $X$ over $B$ is the quantity
	\[
		\sigma(X\colon B):= n - \Sigma^*(X\colon B)^2.
	\]
\end{defi}

We can rephrase Theorem~\ref{thm:rel_subadditivity} and Corollary~\ref{cor:rel_additivity} in terms of free Stein dimension as follows:

\begin{cor}
	\label{cor:dim_additivity}
	Let $B, C \subset M$ be unital $*$-subalgebras, and let $X \in M^n$, $Y\in M^m$ be tuples.
	Then
		\[
			\sigma(X\colon (B\vee C)\<Y\>) + \sigma(Y\colon (B\vee C)\<X\>) \leq \sigma(X,Y\colon B\vee C) \leq  \sigma(X\colon B) + \sigma(Y\colon C).
		\]
	Moreover, suppose $D\subset B\cap C$ is a common unital $*$-subalgebra with conditional expectation $\mc{E}\colon (B\vee C)[X\cup Y]\to D$. If $\BX$ and $C\<Y\>$ are free with amalgamation over $D$, then the above inequalities are equalities.

	In particular, if $\BX$ and $B\<Y\>$ are free with amalgamation over B, then
			\[
				\sigma(X,Y\colon B) = \sigma(X\colon B) +\sigma(Y\colon B).
			\]
	Furthermore, if $X$ and $Y$ are free, then
			\[
				\sigma(X,Y) = \sigma(X)+\sigma(Y).
			\]
\end{cor}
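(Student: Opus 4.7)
The plan is to derive this corollary as a direct algebraic rewriting of Theorem~\ref{thm:rel_subadditivity} (and its Corollary~\ref{cor:rel_additivity}) via the definition $\sigma(Z\colon A) = k - \Sigma^*(Z\colon A)^2$, where $k$ denotes the length of the tuple $Z$. No new analytic input is needed; the only point to keep track of is that the tuple $(X,Y)$ has length $n+m$, so that the additive constants cancel correctly when passing from $\Sigma^*$ to $\sigma$.

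Concretely, I would expand each $\sigma$ appearing in the chain of inequalities. For instance,
\[
\sigma(X,Y\colon B\vee C) = (n+m) - \Sigma^*(X,Y\colon B\vee C)^2,
\]
and
\[
\sigma(X\colon B) + \sigma(Y\colon C) = (n+m) - \bigl(\Sigma^*(X\colon B)^2 + \Sigma^*(Y\colon C)^2\bigr),
\]
with an analogous expansion for $\sigma(X\colon (B\vee C)\<Y\>) + \sigma(Y\colon (B\vee C)\<X\>)$. The shared $(n+m)$ terms cancel on both sides of each inequality, and since subtracting a larger quantity yields a smaller one, the inequality chain claimed for $\sigma$ is precisely the reverse of the chain proved for $\Sigma^*$ in Theorem~\ref{thm:rel_subadditivity}. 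The equality statement under freeness with amalgamation over $D$ transfers across the substitution in exactly the same way.

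The two \emph{in particular} statements are then immediate specializations of the equality case. For the first, I would take $C = B$ and $D = B$, so that $B \vee C = B$ and the hypothesis of freeness with amalgamation reduces to the assumption that $\BX$ and $B\<Y\>$ are free with amalgamation over $B$. For the second, I would take $B = C = D = \C$, where freeness with amalgamation over $\C$ is just ordinary freeness of $X$ and $Y$. There is no genuine obstacle in this argument; the only real risk is a bookkeeping slip when translating between $\sigma$ and $\Sigma^*$ or in tracking tuple lengths, which is easily avoided by writing out the definitions in full before comparing to Theorem~\ref{thm:rel_subadditivity}.
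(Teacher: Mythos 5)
Your proposal is correct and matches the paper's (implicit) proof exactly: the paper introduces this corollary with the sentence ``We can rephrase Theorem~\ref{thm:rel_subadditivity} and Corollary~\ref{cor:rel_additivity} in terms of free Stein dimension as follows,'' and the translation is precisely the bookkeeping with $\sigma(Z\colon A) = k - \Sigma^*(Z\colon A)^2$ and tuple lengths that you carry out.
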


\section{Via Closable Derivations.}
\label{sec:derivations}

In this section we characterize $\sigma(X\colon B)$ in terms of certain closable derivations on $\BX$.
This perspective yields a number of invariance results; in particular, that $\sigma(X\colon B)$ depends only on the algebras $B$ and $\BX$.

For an inclusion of two $*$-subalgebras $B\subset C\subset M$ with $M=W^*(C)$, consider the set
	\[
		\mathsf{Der}_{1\otimes 1}(B\subset C):=\{d\colon C\to L^2(\mmop)\mid d\text{ is a derivation with }B\subset\ker(d)\text{ and }1\otimes 1\in\dom(d^*)\}.
	\]
This set of derivations admits a right $C\otimes C^\circ$-action:
		\[
			d\cdot (a\otimes b):= d(\ \cdot\ )\#(a\otimes b).
		\]
Indeed, $a\otimes b\in \dom(d^*)$ by the same proof as \cite[Proposition 4.1]{VoiV} and so
	\[
		[d\cdot (a\otimes b)]^*(1\otimes 1) = d^*(a\otimes b).
	\]

\begin{lem}\label{lem:how_phi_X_works}
For $B\subset M$ a $*$-subalgebra and $X=(x_1,\ldots, x_n)\in M^n$, the conjugate linear map
	\begin{align*}
		\phi_X\colon \mathsf{Der}_{1\otimes 1}(B\subset \BX) &\to \dom(\partial_{X\colon B}^*)\\
			d&\mapsto \left( J_{\tau\otimes\tau^\circ}d(x_1),\ldots, J_{\tau\otimes\tau^\circ}d(x_n) \right)
	\end{align*}
is a bijection that maps the right $\BX\otimes \BX^\circ$-action on $\mathsf{Der}_{1\otimes 1}(B\subset \BX)$ to the left regular $\BX\otimes \BX^\circ$-action on $L^2(\mmop)$. Consequently, when $M=W^*(\BX)$
	\[
		\sigma(X\colon B) = \dim_{\mmop} \overline{\phi_X(\mathsf{Der}_{1\otimes 1}(B\subset \BX))}.
	\]
\end{lem}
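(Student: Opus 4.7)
My plan breaks into four pieces: well-definedness of $\phi_X$, injectivity, surjectivity, and the intertwining of module actions together with the dimension conclusion. The whole argument is driven by the Leibniz identity
\[
d(\ev_X p) = \sum_{i=1}^n \ev_X(\partial_i p) \# d(x_i), \qquad p \in \BT,
\]
which holds for every $d \in \mathsf{Der}_{1\otimes 1}(B \subset \BX)$ by a direct induction on word length using $B \subset \ker d$ and the product rule (here $\#$ is the left action of $\mmop$ on $L^2(\mmop)$). The role of $J = J_{\tau\otimes\tau^\circ}$ is then to convert the $1 \otimes 1$-pairing that defines $\dom(d^*)$ into the kind of pairing defining $\dom(\partial^*_{X:B})$, via the tracial identity $\ang{J\zeta, \omega} = \ang{J\omega, \zeta}$.

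For well-definedness, given $d$ with $\eta := d^*(1\otimes 1) \in L^2(\BX)$, I would compute for arbitrary $p \in \BT$
\[
\ang{\eta, \ev_X p} = \ang{1\otimes 1, d(\ev_X p)} = \sum_i \ang{1\otimes 1, \ev_X(\partial_i p) \# d(x_i)} = \sum_i \ang{J d(x_i), \ev_X(\partial_i p)},
\]
which shows $\phi_X(d) \in \dom(\partial^*_{X:B})$ with $\partial^*_{X:B}(\phi_X(d)) = \eta$. Injectivity is then immediate: $\phi_X(d) = 0$ forces $d(x_i) = 0$ for all $i$, and the Leibniz identity combined with $B \subset \ker d$ propagates this to $d \equiv 0$ on all of $\BX$.

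The main technical obstacle is surjectivity. Given $\Xi = (\xi_1,\ldots,\xi_n) \in \dom(\partial^*_{X:B})$, I would attempt to define $d$ on $\BX$ by the Leibniz-forced formula
\[
d(\ev_X p) := \sum_{i=1}^n \ev_X(\partial_i p) \# J\xi_i,
\]
the only nontrivial point being well-definedness modulo $\ker \ev_X$. Fix $p \in \BT$ with $\ev_X p = 0$; I would test $\sum_i \ev_X(\partial_i p) \# J\xi_i$ against $a \otimes b \in \BX \otimes \BX^\circ$, writing $a = \ev_X\alpha$, $b = \ev_X\beta$ and transferring to a $\Xi$-pairing via $J$ to obtain $\sum_i \ang{\xi_i, (a^*\otimes b^*) \ev_X(\partial_i p)}$. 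The Leibniz rule for $\partial_i$ applied to $\alpha^* p \beta^*$ writes $(a^* \otimes b^*) \ev_X(\partial_i p)$ as $\ev_X(\partial_i(\alpha^* p \beta^*))$ plus two remainder terms each carrying a factor of $\ev_X p = 0$; the main term, summed against $\xi_i$, collapses to $\ang{\partial^*_{X:B}\Xi, a^* \ev_X(p) b^*} = 0$. Once well-defined, $d$ is a derivation by construction, $B \subset \ker d$ since $\partial_i$ kills $B$, $d(x_i) = J\xi_i$ by $\ev_X(\partial_j t_i) = \delta_{ij}(1\otimes 1)$, and $1 \otimes 1 \in \dom(d^*)$ by retracing the well-definedness chain with general $p$.

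Finally, for the module intertwining, the identity $J(\xi \cdot z) = z^* J(\xi)$ for right multiplication on $L^2(\mmop)$ gives
\[
\phi_X(d\cdot(a\otimes b))_i = J\bigl(d(x_i)\cdot (a\otimes b)\bigr) = (a^*\otimes b^*) \# J d(x_i) = \bigl[(a^*\otimes b^*) \# \phi_X(d)\bigr]_i,
\]
which is the expected correspondence up to the $*$-twist forced by conjugate linearity. Since $\phi_X$ is a bijection onto $\dom(\partial^*_{X:B})$, the closure of its image coincides with $\overline{\dom(\partial^*_{X:B})}$, and the dimension equality then follows from Theorem~\ref{thm:free_Stein_info_is_dim}.
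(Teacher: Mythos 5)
The proposal is correct and takes essentially the same approach as the paper: well-definedness and injectivity via the chain rule $d(\ev_X p) = \sum_i \ev_X(\partial_i p)\# d(x_i)$, surjectivity via the Leibniz-forced formula $d(\ev_X p) = \sum_i \ev_X(\partial_i p)\# J\xi_i$, and the dimension conclusion via Theorem~\ref{thm:free_Stein_info_is_dim}. The only stylistic difference is in the surjectivity step: the paper verifies that the candidate derivation factors through $\ev_X$ by invoking Lemma~\ref{lem:action_on_domain_of_adjoint}, whereas you carry out the equivalent Leibniz manipulation on $\partial_i(\alpha^* p\beta^*)$ directly; these amount to the same computation.
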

\begin{proof}
First notice that each element of $\mathsf{Der}_{1\otimes 1}(B\subset \BX)$ is determined by its values on $X$. Hence $\phi_X$ is injective.

Now, given $d\in \mathsf{Der}_{1\otimes 1}(B\subset \BX)$, we have for any $p\in \BT$
	\[
		\<d^*(1\otimes 1), \ev_X p\>_2 = \< 1\otimes 1, d(\ev_X p)\> = \< 1\otimes 1, \sum_{i=1}^n \ev_X\circ \partial_i (p)\# d(x_i)\>_2 = \< \phi_X(d), \ev_X\circ\partial(p)\>_2.
	\]
Thus $\phi_X(d) \in \dom(\partial_{X\colon B}^*)$.

Given $a=(a_1,\ldots, a_n)\in \dom(\partial_{X\colon B}^*)$, define
	\[
		\tilde{d}_a\colon \BT\ni p\mapsto \sum_{i=1}^n \ev_X\circ \partial_i(p) \# J_{\tau\otimes\tau^\circ} a_i.
	\]
Then for $p,q,r\in \BT$ one has
	\[
		\<\ev_X(p\otimes q), \tilde{d}_a(r)\>_{\HS} = \< \ev_X(p\otimes q)\# a, \ev_X\circ\partial(r)\>_{\HS}= \< \partial_{X\colon B}^*(\ev_X(p\otimes q)\# a), \ev_X(r)\>_{\HS},
	\]
where the last equality uses Lemma~\ref{lem:action_on_domain_of_adjoint}. It follows that $\tilde{d}_a=d_a\circ \ev_X$ for some $d_a \in \mathsf{Der}_{1\otimes 1}(B\subset \BX)$ with $d_a^*(1\otimes 1) = \partial_{X\colon B}^*(a)$. In particular, $d_a(x_i)= \tilde{d}_a(t_i) = J_{\tau\otimes\tau^\circ}(a_i)$. Thus $a=\phi_X(d_a)\in \phi_X(\mathsf{Der}_{1\otimes 1}(B\subset \BX))$.
\end{proof}

\subsection{Algebraic invariance.}

If $Y\in \BX^m$ satisfies $B\<Y\>=\BX$, then $\phi_Y\circ\phi_X^{-1}$ yields a left $\BX\otimes \BX^\circ$-module isomorphism of $\dom(\partial_{X\colon B}^*)\cong \dom(\partial_{Y\colon B}^*)$. This extends to a left $\mmop$-module isomorphism $\overline{\dom(\partial_{X\colon B}^*)}\cong \overline{\dom(\partial_{Y\colon B}^*)}$. Using Theorem~\ref{thm:free_Stein_info_is_dim} we obtain the following theorem:

\begin{thm}\label{thm:algebra_inv}
If $Y\in \BX^m$ satisfies $B\<Y\>=\BX$, then
	\[
		\sigma(Y\colon B)=\sigma(X\colon B).
	\]	
\end{thm}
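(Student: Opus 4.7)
The plan is to build a natural isomorphism between $\dom(\partial_{X\colon B}^*)$ and $\dom(\partial_{Y\colon B}^*)$ using the derivation picture from Lemma~\ref{lem:how_phi_X_works}. The crucial observation is that the hypothesis $B\<Y\> = \BX$ implies the two derivation spaces $\mathsf{Der}_{1\otimes 1}(B \subset \BX)$ and $\mathsf{Der}_{1\otimes 1}(B \subset B\<Y\>)$ are literally identical, so the composition $\Psi := \phi_Y \circ \phi_X^{-1}\colon \dom(\partial_{X\colon B}^*) \to \dom(\partial_{Y\colon B}^*)$ is a well-defined linear bijection (the two conjugate-linearities cancel). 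By the module property in Lemma~\ref{lem:how_phi_X_works} applied to both $X$ and $Y$, the map $\Psi$ intertwines the left $\BX \otimes \BX^\circ$-actions on either side.

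Next I would verify that $\Psi$ is bounded. Given $\xi = (\xi_1,\ldots,\xi_n) \in \dom(\partial_{X\colon B}^*)$, set $d := \phi_X^{-1}(\xi)$, so $d(x_i) = J_{\tau\otimes\tau^\circ}\xi_i$. Choosing polynomials $p_j \in \BT$ with $\ev_X(p_j) = y_j$, the Leibniz rule applied to $d$ gives
$$d(y_j) = \sum_{i=1}^n \ev_X(\partial_i p_j) \# d(x_i),$$
whence $\|\Psi(\xi)\|_2 \leq C\|\xi\|_2$ for a constant $C$ depending only on the operator norms of the elements $\ev_X(\partial_i p_j) \in \BX \otimes \BX^\circ$. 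The symmetric argument, using $x_i \in B\<Y\> = \BX$, shows that $\Psi^{-1}$ is bounded as well.

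With boundedness in hand, I would use Kaplansky density to extend $\Psi$ to a bounded linear bijection $\bar\Psi\colon \overline{\dom(\partial_{X\colon B}^*)} \to \overline{\dom(\partial_{Y\colon B}^*)}$ with bounded inverse, intertwining the full left $\mmop$-actions. Polar decomposing $\bar\Psi$ then produces a unitary $\mmop$-module isomorphism, so the two Hilbert $\mmop$-modules have equal Murray--von Neumann dimensions. Applying Theorem~\ref{thm:free_Stein_info_is_dim} yields $\sigma(X\colon B) = \sigma(Y\colon B)$; note that the hypothesis $M = W^*(\BX)$ there is equivalent to $M = W^*(B\<Y\>)$ since $\BX = B\<Y\>$, so the dimension formula applies to both tuples simultaneously.

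The main technical point is the boundedness step: one must translate the abstract bijection $\phi_Y \circ \phi_X^{-1}$ into an explicit Leibniz-rule expression to get a uniform bound across all of $\dom(\partial_{X\colon B}^*)$. Once this is in place, extending the $\BX\otimes\BX^\circ$-module map to an $\mmop$-module map via Kaplansky density and passing to the polar decomposition to produce a bona fide unitary isomorphism are routine.
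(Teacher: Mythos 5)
Your proposal is correct and follows essentially the same route as the paper: both use $\phi_Y\circ\phi_X^{-1}$ (which is well defined precisely because $\mathsf{Der}_{1\otimes 1}(B\subset\BX)=\mathsf{Der}_{1\otimes 1}(B\subset B\<Y\>)$) as a $\BX\otimes\BX^\circ$-module isomorphism, extend it to the closures, and invoke Theorem~\ref{thm:free_Stein_info_is_dim}. The only difference is that you spell out the boundedness via the Leibniz expansion $d(y_j)=\sum_i\ev_X(\partial_i p_j)\#d(x_i)$ and the passage to a unitary via polar decomposition, steps the paper leaves implicit; both are exactly the right way to fill that gap.
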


\begin{rem}
It follows from Theorem~\ref{thm:algebra_inv} that for any $Y\in \BX^m$, we have
	\[
		\sigma(X,Y\colon B)= \sigma(X\colon B).
	\]
In particular, if $Y\in B^m$ then $\sigma(Y\colon B)=0$.
\end{rem}

For every $Y\in \BX^m$ we have the following map:
	\begin{align*}
		\psi_{X,Y}\colon \mathsf{Der}_{1\otimes 1}(B\subset \BX) &\to \mathsf{Der}_{1\otimes 1}(B\subset B\<Y\>)\\
			d&\mapsto d\mid_{B\<Y\>}.
	\end{align*}
Of course, if $B\<Y\> = B\<X\>$ then this map is the identity map, but otherwise it is potentially neither injective nor surjective. Nevertheless, one can therefore always consider the composition $\phi_Y\circ \psi_{X,Y}\circ\phi_X^{-1}$.

\begin{prop}\label{prop:change_of_variables}
Let $Y\in \BX^n$ with $Y=\ev_X F$ for some $F\in \BT$. Then for $a\in \dom(\partial_{X\colon B}^*)$, we have
	\begin{align}\label{eqn:change_of_variables}
		\phi_Y\circ \psi_{X,Y}\circ\phi_X^{-1}(a) = a \# \ev_X\circ \J(F)^* \in \dom(\partial_{Y\colon B}^*),
	\end{align}
with $\partial_{Y\colon B}^*(a \# \ev_Y\circ \J(F)^*)=\partial_{X\colon B}^*(a)$.
Moreover, $\phi_Y\circ\psi_{X,Y}\circ\phi_X^{-1}$ extends to a map $\rho_{X,Y}\colon \overline{\dom(\partial_{X\colon B}^*)} \to \overline{\dom(\partial_{Y\colon B}^*)}$, and when $M=W^*(\BX)$ one has
	\[
		\sigma(X\colon B)\leq \sigma(Y\colon B) + \dim_{\mmop}(\ker(\rho_{X,Y})).
	\]
\end{prop}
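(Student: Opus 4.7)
The plan is to compute $\phi_Y\circ\psi_{X,Y}\circ\phi_X^{-1}(a)$ directly via the Leibniz rule applied to $d := \phi_X^{-1}(a)$ on the generators $y_i = \ev_X F_i$, then translate the resulting expression back through $J := J_{\tau\otimes\tau^\circ}$ to obtain the matrix-vector product formula, and finally use Murray--von Neumann rank--nullity on the induced module map to derive the dimension estimate.

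By Lemma~\ref{lem:how_phi_X_works} the derivation $d$ satisfies $d(x_j) = J(a_j)$ (since $J^2 = I$), so the Leibniz rule gives
\[
d(y_i) = \sum_{j=1}^n \ev_X(\partial_j F_i)\,\#\, J(a_j).
\]
The standard-form identity $J\eta J = \rho(\eta^*)$ for $\eta\in \mmop$ (with $\rho$ denoting the right action on $L^2(\mmop)$) then gives $J(\eta\#J(a_j)) = a_j\cdot \eta^*$, whence
\[
\phi_Y(\psi_{X,Y}(d))_i = J(d(y_i)) = \sum_{j=1}^n a_j\cdot \ev_X(\partial_j F_i)^* = (a\,\#\,\ev_X\circ\J(F)^*)_i,
\]
establishing~\eqref{eqn:change_of_variables}; membership in $\dom(\partial^*_{Y\colon B})$ is immediate from the range of $\phi_Y$. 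For the adjoint identity, applying Lemma~\ref{lem:how_phi_X_works} to both sides gives $\partial^*_{X\colon B}(a) = d^*(1\otimes 1)$ and $\partial^*_{Y\colon B}(a\#\ev_X\J(F)^*) = \psi_{X,Y}(d)^*(1\otimes 1)$; these defining pairings agree on the common test space $B\<Y\>$ (both returning $\<1\otimes 1, d(q)\>_2$ for $q\in B\<Y\>$), which identifies the values in the natural codomain $L^2(B\<Y\>)$ of $\partial^*_{Y\colon B}$.

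For the extension and dimension bound, right multiplication by the bounded matrix $\ev_X\J(F)^*\in M_n(\mmop)$ defines a bounded left $\mmop$-module map on $L^2(\mmop)^n$ that carries the dense submodule $\dom(\partial^*_{X\colon B})$ into $\dom(\partial^*_{Y\colon B})$, so by continuity it extends to $\rho_{X,Y}\colon \overline{\dom(\partial^*_{X\colon B})}\to \overline{\dom(\partial^*_{Y\colon B})}$. Additivity of Murray--von Neumann dimension in the short exact sequence
\[
0\to \ker(\rho_{X,Y})\to \overline{\dom(\partial^*_{X\colon B})}\to \overline{\im(\rho_{X,Y})}\to 0,
\]
combined with $\overline{\im(\rho_{X,Y})}\subseteq \overline{\dom(\partial^*_{Y\colon B})}$, yields $\dim_{\mmop}\overline{\dom(\partial^*_{X\colon B})} \leq \dim_{\mmop}\ker(\rho_{X,Y}) + \dim_{\mmop}\overline{\dom(\partial^*_{Y\colon B})}$. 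Theorem~\ref{thm:free_Stein_info_is_dim} then rewrites this as $\sigma(X\colon B)\leq \sigma(Y\colon B) + \dim_{\mmop}(\ker\rho_{X,Y})$.

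The main obstacle lies in bookkeeping the interplay of the conjugate-linearity of $\phi_X$, the left $\#$-action, and the right action implicit in the matrix-vector expression $a\#\ev_X\J(F)^*$: the identity $J(\eta\,\#\,J\xi) = \xi\cdot\eta^*$ must be applied carefully to convert the chain-rule expansion into the claimed matrix-vector form, and the adjoint identity has to be interpreted in the natural codomain $L^2(B\<Y\>)$. Once these are handled on the dense subspace $\dom(\partial^*_{X\colon B})$, the extension to closures and the final dimension bound follow from the standard theory of Murray--von Neumann dimension.
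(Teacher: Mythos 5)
Your proof is correct and follows essentially the same route as the paper's: compute $d_a(y_i)$ via the chain rule, conjugate by $J_{\tau\otimes\tau^\circ}$ to land on the matrix--vector formula, then extend by boundedness of the right $M_n(\mmop)$-action and apply rank--nullity for Murray--von Neumann dimension. You have in fact been slightly more explicit than the paper on the adjoint identity $\partial_{Y:B}^*(a\#\ev_X\J(F)^*) = \partial_{X:B}^*(a)$, which the paper's proof does not actually address; your observation that the two adjoint values agree only after pairing against $B\ang{Y}$ (i.e.\ upon projecting into the codomain $L^2(B\ang Y)$ of $\partial_{Y:B}^*$) is exactly the right caveat, since $\partial_{X:B}^*(a)$ need not lie in $L^2(B\ang Y)$ when $B\ang{Y}\subsetneq B\ang{X}$. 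One small slip: $\dom(\partial_{X:B}^*)$ is dense in its closure $\overline{\dom(\partial_{X:B}^*)}$, not in $L^2(\mmop)^n$; the extension argument only needs the former, so this is a phrasing issue and not a gap.
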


\begin{proof}
Let $Y=(y_1,\ldots, y_m)$ and $F=(f_1,\ldots, f_m)$. For $a=(a_1,\ldots, a_n)\in \dom(\partial_{X\colon B}^*)$, we have that $\phi_X^{-1}(a)$ is given by the derivation $d_a$ defined in the proof of Lemma~\ref{lem:how_phi_X_works}. In particular, $d_a(x_i)= J_{\tau\otimes\tau^\circ} a_i$. It follows that
	\begin{align*}
		\phi_Y\circ\psi_{X,Y}\circ \phi_X^{-1}(a) &=( J_{\tau\otimes\tau^\circ} d_a(y_1),\ldots,J_{\tau\otimes\tau^\circ} d_a(y_m))\\
			&=\left( J_{\tau\otimes\tau^\circ} \sum_{i_1=1}^n  \ev_X\circ \partial_{i_1}(f_1)\# J_{\tau\otimes \tau^\circ} a_{i_1}, \ldots, J_{\tau\otimes\tau^\circ} \sum_{i_m=1}^n  \ev_X\circ \partial_{i_m}(f_m)\# J_{\tau\otimes \tau^\circ} a_{i_m} \right)\\
			&= \left( \sum_{i_1}^n a_{i_1} \# \ev_X\circ \partial_{i_1}(f_1)^*, \ldots, \sum_{i_m}^n a_{i_m} \#  \ev_X\circ \partial_{i_m}(f_m)^* \right)\\
			&= a \# \ev_X\circ \J(F)^*,
	\end{align*}
and so Equation~(\ref{eqn:change_of_variables}) holds. As the right action of $M_n(\BX\otimes\BX^\circ)$ is bounded, we immediately obtain the extension $\rho_{X,Y}$. Furthermore, $\rho_{X,Y}$ commutes with the left action of $\mmop$ and so is a left $\mmop$-module map when $M=W^*(\BX)$. Hence the claimed inequality follows from Theorem~\ref{thm:free_Stein_info_is_dim} and the rank--nullity theorem.
\end{proof}

	This structure in many cases puts restrictions on the sort of kernels that may be produced for a given tuple.
	For example, in light of Theorem~\ref{thm:rel_subadditivity} (and in particular its proof) one may ask whether a kernel for $X$ may always be extended to a kernel for a larger system $(X, Y)$, as in many nice cases this can be done.
	However, using the above proposition, Example~\ref{ex:no_extension} shows that this is not always possible.

\begin{rem}\label{rem:power_series_extension}
Theorem~\ref{thm:algebra_inv} and Proposition~\ref{prop:change_of_variables} can be generalized slightly by considering the following non-commutative power series. After \cite{CS16}, for $R>0$ we denote by $B\<T\>_R$ the completion of $\BT$ in the norm	
	\[
		\|p\|_R:= \inf\left\{ \sum \|b_0\|\cdots \|b_d\| R^d  \colon p=\sum b_0 t_{i_1}b_1\cdots t_{i_d}b_d,\ b_0,b_1,\ldots, b_d\in B\right\}.
	\]
	Note that this is in fact a Banach norm.
We also denote
	\[
		\BT_{>R} := \bigcup_{R'>R} \BT_{R'}.
	\]
	This space should be regarded as non-commutative power series with radius of convergence strictly greater than $R$. Observe that if $R\geq \max_i \|x_i\|$, the evaluation $\ev_X$ extends continuously to a homomorphism $\BT_{>R} \to M$ that sends $t_i$ to $x_i$. We denote $\BX_{>R}=\ev_X(\BT_{>R})$.

	It is readily seen that the derivations $\partial_i$, $i=1,\ldots, n$, extend to derivations on $\BT_{>R}$ that are valued in the projective tensor product $\BT_{>R}\hat\otimes \BT_{>R}^\circ$.
The evaluation map on $\BT\otimes \BT^{\circ}$ extends to $\BT_{>R}\hat\otimes \BT_{>R}^\circ$ and is valued in $\mmop$.
Consequently, when $R\geq \max_i \|x_i\|$, any $d\in \mathsf{Der}_{1\otimes 1}(B\subset \BX)$ can be extended to $\ev_X p\in \BX_{>R}$ by
		\[
			d(\ev_X p) := \sum_{i=1}^n \ev_X \partial_i(p)\# d(x_i).
		\]
That is,
	\[
		\mathsf{Der}_{1\otimes 1}(B\subset \BX) \subset \mathsf{Der}_{1\otimes 1}(B\subset \BX_{>R}).
	\]
In fact, the above inclusion is an equality. Indeed, all concerned derivations are closable by virtue of having $1\otimes 1$ in the domain of their adjoints. Consequently, such a derivation on $\BX_{>R}$ is uniquely determined by its values on $\BX$. It follows that for $Y\in \BX_{>R}^m$, if $B\<Y\>_{>R} = \BX_{>R}$ then $\sigma(Y\colon B)=\sigma(X\colon B)$. 
\end{rem}


\subsection{The special case of $B=\C$.}

We consider now the special case $B=\C$.
Of particular interest to us will be the case when $\partial$ gives a closable densely defined operator $\partial : L^2(M) \to L^2(\mmop)^n$, in which case we denote its closure by $\bar\partial$.
(We will see in Corollary~\ref{cor:full_free_Stein_dimension_iff_closable} that this is equivalent to the condition $\sigma(X) = n$.)
Since $\partial$ is a derivation which is \emph{symmetric} in the sense that
	\[
		\<a\cdot\partial(b),\partial(c)\>_2 = \<\partial(c^*),\partial(b^*)\cdot a^*\>_2\qquad a,b,c\in \C\<X\>,
	\] 
it follows from \cite{DL92} that $\bar\partial$ is a symmetric derivation on $\dom(\bar\partial)\cap M$, which is itself a $*$-algebra.

\begin{thm}\label{thm:extension_to_closure_of_domain}
	Let $M=W^*(X)$. Suppose $\partial: L^2(M) \to L^2(\mmop)^n$ gives a closable densely defined operator.
	Then for any $Y=(y_1,\ldots, y_m)\in \left(\dom(\bar\partial)\cap M\right)^m$ with $\bar\partial(y_j) \in (\mmop)^n$ for each $j=1,\ldots m$, we have $\sigma(X) = \sigma(X,Y)$.
\end{thm}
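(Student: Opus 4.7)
The plan is to use the derivation characterization from Lemma~\ref{lem:how_phi_X_works} to identify $\sigma(X)$ and $\sigma(X, Y)$ as Murray--von Neumann dimensions of the closures of $\phi_X(\mathsf{Der}_{1\otimes 1}(\C \subset \C\<X\>))$ and $\phi_{(X,Y)}(\mathsf{Der}_{1\otimes 1}(\C \subset \C\<X, Y\>))$, respectively. I would construct a bijection between $\mathsf{Der}_{1\otimes 1}(\C \subset \C\<X\>)$ and $\mathsf{Der}_{1\otimes 1}(\C \subset \C\<X, Y\>)$---restriction in one direction and an explicit extension in the other---whose translation under the $\phi$-maps is a bounded left-$\mmop$-linear isomorphism on the Hilbert-space closures.

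Restriction is immediate: given $D \in \mathsf{Der}_{1\otimes 1}(\C \subset \C\<X, Y\>)$, the restriction $D|_{\C\<X\>}$ is a derivation, and the condition $1 \otimes 1 \in \dom(D^*)$ descends because $\C\<X\>$ is $\|\cdot\|_2$-dense in $L^2(M) = L^2(W^*(X))$. For extension, given $d \in \mathsf{Der}_{1\otimes 1}(\C \subset \C\<X\>)$ with $d(x_i) \in L^2(\mmop)$, I would define
$$\tilde d(y_j) := \sum_{i=1}^n \bar\partial_i(y_j) \# d(x_i),$$
which makes sense in $L^2(\mmop)$ precisely because $\bar\partial_i(y_j) \in \mmop$ is bounded by hypothesis. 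Extended by the Leibniz rule, this is a candidate map on the algebra generated by $X \cup Y$. For well-definedness on $\C\<X, Y\>$, I would take the closure $\bar d$ of $d$ (which exists since $1 \otimes 1 \in \dom(d^*)$), show that each $y_j$ lies in $\dom(\bar d)$ with $\bar d(y_j)$ given by the above formula, and use that $\dom(\bar\partial) \cap M$ is a $*$-algebra (hence contains $\C\<X, Y\>$) together with a Dabrowski--Lott-style identification to realize $\tilde d$ as the restriction of $\bar d$ to $\C\<X, Y\>$. This identification also gives $1 \otimes 1 \in \dom(\tilde d^*)$ with $\tilde d^*(1 \otimes 1) = d^*(1 \otimes 1)$.

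The key technical step---and the main obstacle---is showing $y_j \in \dom(\bar d)$ with the stated value. Approximating $y_j$ by $p_k \in \C\<X\>$ with $p_k \to y_j$ in $L^2$ and $\partial(p_k) \to \bar\partial(y_j)$ in $L^2(\mmop)^n$ gives $d(p_k) = \sum_i \partial_i(p_k) \# d(x_i)$, and we need this to converge in $L^2$ to $\sum_i \bar\partial_i(y_j) \# d(x_i)$. Since the $d(x_i)$ are only $L^2$-vectors, mere $L^2$-convergence of the $\partial_i(p_k)$ is insufficient; here the hypothesis $\bar\partial(y_j) \in (\mmop)^n$ is essential, enabling a Kaplansky-density-style argument that produces approximants with $\partial_i(p_k) \to \bar\partial_i(y_j)$ in SOT with uniform operator-norm bound, enough to pass to the limit after applying $\# d(x_i)$. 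Applying the same reasoning to a general $D \in \mathsf{Der}_{1\otimes 1}(\C \subset \C\<X, Y\>)$ in place of $d$ shows that restriction-then-extension recovers $D$, so the two maps are mutually inverse. Under $\phi_X$ and $\phi_{(X, Y)}$ they translate to projection onto the first $n$ coordinates of $L^2(\mmop)^{n+m}$ and to a bounded left-$\mmop$-linear map $a \mapsto (a, c(a))$ built from the bounded operators $\bar\partial_i(y_j)$; these extend to mutually inverse $\mmop$-module isomorphisms on the Hilbert-space closures, and Theorem~\ref{thm:free_Stein_info_is_dim} delivers $\sigma(X) = \sigma(X, Y)$.
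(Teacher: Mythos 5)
Your high-level plan — identify both sides as Murray--von Neumann dimensions via Lemma~\ref{lem:how_phi_X_works}, build a bijection between $\mathsf{Der}_{1\otimes 1}(\C\subset\C\<X\>)$ and $\mathsf{Der}_{1\otimes 1}(\C\subset\C\<X,Y\>)$, and translate under the $\phi$-maps — matches the paper's structure, and your restriction direction is correct as stated. The gap is in your construction of the extension map.

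You define $\tilde d$ only on the generators $y_j$ by $\tilde d(y_j) := \sum_i \bar\partial_i(y_j)\#d(x_i)$, propose to extend by the Leibniz rule, and then, to handle well-definedness on $\C\<X,Y\>$, take a detour through showing $y_j\in\dom(\bar d)$ (the closure of $d$). This is where things break down. First, the ``Kaplansky-density-style argument'' you invoke — to upgrade $L^2$-convergence $\partial_i(p_k)\to\bar\partial_i(y_j)$ to SOT-convergence with a uniform operator-norm bound — is not justified: Kaplansky's theorem controls the approximants $p_k$ in $M$, not the values $\partial_i(p_k)$ of the unbounded operator $\partial_i$, and there is no reason these can be arranged to stay bounded in operator norm. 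Second, the appeal to Dabrowski--Lott to identify $\tilde d$ with $\bar d|_{\C\<X,Y\>}$ is shaky: that machinery requires \emph{symmetric} derivations, and a general $d\in\mathsf{Der}_{1\otimes 1}(\C\subset\C\<X\>)$ need not be symmetric, so one cannot conclude that $\dom(\bar d)\cap M$ is even an algebra. In fact it is not clear that $y_j\in\dom(\bar d)$ holds at all under the stated hypotheses.

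The paper avoids the entire issue by a cleaner definition: set $\tilde d(p) := \sum_{i=1}^n\bar\partial_i(p)\#d(x_i)$ \emph{for every} $p\in\C\<X,Y\>$, not just on generators. Since $\dom(\bar\partial)\cap M$ is a $*$-algebra containing $\C\<X,Y\>$ and $\bar\partial$ is a derivation on it (the [DL92] machinery is applied here, where $\partial$ really is symmetric), and since $\bar\partial_i(p)\in\mmop$ is bounded by hypothesis, this formula is a bona fide derivation with no well-definedness to check. The fact that $1\otimes 1\in\dom(\tilde d^*)$ with $\tilde d^*(1\otimes 1)=d^*(1\otimes 1)$ then follows from a short weak computation: $\langle d^*(1\otimes 1),p\rangle_2 = \lim_k\langle 1\otimes 1,d(p_k)\rangle_2 = \lim_k\sum_i\langle\partial_i(p_k)^*,d(x_i)\rangle_2 = \sum_i\langle\bar\partial_i(p)^*,d(x_i)\rangle_2 = \langle 1\otimes 1,\tilde d(p)\rangle_2$, using only that $\partial(p_k)\to\bar\partial(p)$ in $L^2$ (available by definition of $\bar\partial$) and that $J_{\tau\otimes\tau^\circ}$ is an $L^2$-isometry. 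No strong convergence of $d(p_k)$ is ever needed, so the obstacle you flag as ``the main obstacle'' simply never arises.
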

\begin{proof}
	First note that since $\dom(\bar\partial)\cap M$ is a $*$-algebra, it contains $\C\<X,Y\>$. Moreover, since each $\bar\partial y_j$ is a bounded operator, $\bar\partial p$ is a bounded operator for every $p\in \C\<X,Y\>$.

Now, given $d\in \mathsf{Der}_{1\otimes 1}(\C\subset\C\<X\>)$ define $\tilde{d}\colon \C\<X,Y\> \to L^2(\mmop)$ by
	\[
		\tilde{d}(p) = \sum_{i=1}^n \bar\partial_i(p)\# d(x_i).
	\]
We claim $\tilde{d}\in \mathsf{Der}_{1\otimes 1}(\C\subset \C\<X,Y\>)$. Indeed, it is a derivation by virtue of $\bar\partial$ being a derivation on $\dom(\bar\partial)\cap M\supset \C\<X,Y\>$. To see that $1\otimes 1\in \dom(\tilde{d}^*)$, note that for any $p\in \C\<X,Y\>$ there is a sequence $(p_k)_{k\in \N}\subset \C\<X\>$ converging to $p$ in $L^2(M)$ with $(\partial (p_k) )_{k\in \N}$ converging to $\bar\partial(p)$ in $L^2(\mmop)^n$. Consequently,
	\begin{align*}
		\< d^*(1\otimes 1), p\>_2 &= \lim_{k\to\infty} \< d^*(1\otimes 1), p_k\>_2\\
			&= \lim_{k\to\infty} \< 1\otimes 1, d(p_k)\>_2\\
			&= \lim_{k\to \infty} \sum_{i=1}^n \< \partial_i(p_k)^* , d(x_i)\>_2\\
			&= \sum_{i=1}^n \< \bar\partial_i(p)^*, d(x_i)\>_2 = \< 1\otimes 1, \tilde{d}(p)\>_2,
	\end{align*}
where the second-to-last equality follows from the fact that the adjoint is an isometry on $L^2(\mmop)$. Thus $1\otimes 1 \in \dom(\tilde{d}^*)$ with $\tilde{d}^*(1\otimes 1) = d^*(1\otimes 1)$. This establishes the claim.

Next consider $d\in \mathsf{Der}_{1\otimes 1}(\C\subset \C\<X,Y\>)$. We claim $d(y_j) = \bar\partial(y_j)\# d(X)$ for each $j=1,\ldots, m$. Indeed, for each $j=1,\ldots, n$ let $(y_j^{(k)})_{k\in\N}\subset \C\<X\>$ be a sequence converging to $y_j$ in $L^2(M)$ with $(\partial(y_j^{(k)}))_{k\in \N}$ converging to $\bar\partial(y_j)$ in $L^2(\mmop)^n$. Then for each $j=1,\ldots,m$ and any $a\in \C\<X\>\otimes \C\<X\>^\circ$ we have
	\begin{align*}
		\<d(y_j), a\>_2 &= \< y_j, d^*(a)\>_2 \\
			&= \lim_{k\to \infty} \< y_j^{(k)} , d^*(a)\>_2 \\
			&= \lim_{k\to\infty} \< d(y_j^{(k)}), a\>_2 \\
			&=\lim_{k\to\infty} \sum_{i=1}^n \< \partial_i(y_j^{(k)})\# d(x_i), a\>_2 \\
			&= \lim_{k\to\infty} \sum_{i=1}^n \< \partial_i(y_j^{(k)}), a\# J_{\tau\otimes\tau^\circ} d(x_i)\>_2\\
			&= \sum_{i=1}^n \< \bar\partial_i(y_j), a\# J_{\tau\otimes\tau^\circ} d(x_i)\>_2 = \< \bar\partial(y_j)\# d(X), a\>_2.
	\end{align*}
This yields the claimed equality since $\C\<X\>\otimes \C\<X\>^\circ$ is dense in $L^2(\mmop)$.

The first claim established the existence of a map
	\begin{align*}
		\mathsf{Der}_{1\otimes 1}(\C\subset \C\<X\>) &\to \mathsf{Der}_{1\otimes 1}(\C\subset \C\<X,Y\>)\\
			d&\mapsto \tilde{d}.
	\end{align*}
The second claim shows that every derivation in the latter set is completely determined by its values on the tuple $X$ and $\bar\partial(y_j)$ for $j=1,\ldots,m$. It follows that the above map is a bijection, and so by Lemma~\ref{lem:how_phi_X_works} we have $\sigma(X)=\sigma(X,Y)$.
\end{proof}

\begin{rem}
For $R\geq \max_i \|x_i\|$, Theorem~\ref{thm:extension_to_closure_of_domain} applies to any $y\in \C\<X\>_{>R}$ as in Remark~\ref{rem:power_series_extension}. It also applies to $f(p)$, where $p\in \dom(\bar\partial)\cap M$ is self-adjoint with $\bar\partial(p)\in(\mmop)^n$ and $f\in C^1(\R)$. In this case $\bar\partial(f(p))=\partial_p(f)\# \bar\partial(p)$, where $\partial_p(f)$ is the image of the function
	\[
		\tilde{f}(s,t) = \begin{cases} \frac{f(s) - f(t)}{s -t} & \text{if $s\neq t$} \\ f'(s) & \text{if s=t}\end{cases},
	\]
under the identification of the unital $C^*$-algebra generated by $p\otimes 1$ and $1\otimes p$ with continuous functions on its spectrum. Moreover, this can be further extended to Lipschitz functions $f$ on $\R$ (see \cite[Theorem 5.1]{DL92}).
\end{rem}

Lastly, we show that $1$ is a lower bound for $\sigma(X)$ as soon as $W^*(X)$ contains a diffuse element. In particular, this implies that $\sigma(X)\geq 1$ for any generating set $X$ of the hyperfinite $\II_1$ factor $R$.

\begin{thm}
If $W^*(X)$ contains a diffuse element, then $\sigma(X)\geq 1$.
\end{thm}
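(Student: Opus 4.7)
The plan is to apply Lemma~\ref{lem:how_phi_X_works} and Theorem~\ref{thm:free_Stein_info_is_dim}: showing $\sigma(X)\geq 1$ reduces to producing a family of derivations in $\mathsf{Der}_{1\otimes 1}(\C\subset\C\<X\>)$ whose images under $\phi_X$ span, as a closed left $\mmop$-submodule of $L^2(\mmop)^n$, something of Murray--von Neumann dimension at least $1$.

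The diffuseness of the given element of $W^*(X)$ will be used to extract a Haar unitary. Passing to $(a+a^*)/2$ if necessary, we may assume the diffuse element $a$ is self-adjoint; then $u := \exp(2\pi\i F_a(a))\in W^*(a)\subseteq M$ is a Haar unitary, where $F_a$ denotes the cumulative distribution function of $\mu_a$, so that $\tau(u^k)=0$ for every $k\in\Z\setminus\{0\}$. The candidate derivations are the inner ones: for $\alpha\in\mmop$ set $d_\alpha\colon\C\<X\>\to L^2(\mmop)$ by $d_\alpha(p):=(p\otimes 1)\#\alpha - \alpha\#(p\otimes 1)$. When $\alpha=y\otimes z$ one has $d_\alpha(p)=[p,y]\otimes z$, and since the trace of a commutator vanishes, $\ang{1\otimes 1,\, d_\alpha(p)}_2 = \tau([p,y])\tau(z^*)=0$; by linearity this extends to all $\alpha\in\mmop$, so each $d_\alpha\in\mathsf{Der}_{1\otimes 1}(\C\subset\C\<X\>)$ and $\phi_X(d_\alpha)\in\dom(\partial_{X\colon\C}^*)$.

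The remaining, and central, task is the dimension count, which we expect to be the principal obstacle: one must show that the closed $\mmop$-submodule $V\subseteq L^2(\mmop)^n$ generated by $\set{\phi_X(d_\alpha)\colon \alpha\in W^*(u)\bar\otimes W^*(u)^\circ}$ satisfies $\dim_{\mmop}V\geq 1$. The plan is to identify the orthogonal complement $V^\perp$: an element $\eta=(\eta_1,\ldots,\eta_n)\in L^2(\mmop)^n$ lies in $V^\perp$ precisely when $\sum_i \ang{[x_i,y]\otimes z,\,\eta_i}_2=0$ for every $y,z\in W^*(u)$, and specialising to the orthonormal basis $\set{u^k\otimes u^{-\ell}}_{k,\ell\in\Z}$ of $L^2(W^*(u)\bar\otimes W^*(u)^\circ)$ translates this into a system of intertwining relations on the Fourier coefficients of the $\eta_i$. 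The density of the span of $\set{u^k}_{k\in\Z}$ in $L^2(W^*(u))$---a direct consequence of the diffuseness of $a$, parallel to how diffuseness enters the one-variable computation of Theorem~\ref{thm:one_variable_computation}---should force these coefficients to concentrate on a ``diagonal'' subspace whose Murray--von Neumann dimension inside $L^2(\mmop)^n$ is at most $n-1$, yielding $\dim_{\mmop}V\geq 1$. Upgrading this spectral-type vanishing into a rigorous Murray--von Neumann bound, rather than a merely qualitative orthogonality statement, is the crux of the argument.
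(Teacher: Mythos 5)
Your starting point — inner derivations of the form $d_\eta(p) = p\cdot\eta - \eta\cdot p$ for $\eta\in L^2(\mmop)$ — is exactly what the paper uses, but there are two significant problems with the execution.

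First, your formula for $d_\alpha$ is not a derivation. You define $d_\alpha(p) := (p\otimes 1)\#\alpha - \alpha\#(p\otimes 1)$, which for $\alpha = y\otimes z$ gives $[p,y]\otimes z$. But in the $\C\ang{X}$-bimodule structure on $L^2(\mmop)\cong\HS(L^2(M))$, the right action of $p$ sends $a\otimes b^\circ$ to $a\otimes(bp)^\circ$, \emph{not} to $ap\otimes b^\circ$. So the Leibniz rule fails: $d_\alpha(pq) = p[q,y]\otimes z + [p,y]q\otimes z$ whereas $p\cdot d_\alpha(q) + d_\alpha(p)\cdot q = p[q,y]\otimes z + [p,y]\otimes(qz)^\circ$. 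The correct inner derivation is the bimodule commutator $d_\eta(p) = (p\otimes 1 - 1\otimes p^\circ)\#\eta$; for $\eta = a\otimes b^\circ$ this is $pa\otimes b^\circ - a\otimes(bp)^\circ$, and $d_\eta^*(1\otimes 1) = \tau(b^*)a^* - \tau(a^*)b^*$ (a genuinely nonzero vector in general, unlike with your formula, which would force $d_\alpha^*(1\otimes 1) = 0$ for all $\alpha$).

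Second, and more fundamentally, your dimension count is not an argument but an aspiration. You restrict $\alpha$ to $W^*(u)\bar\otimes W^*(u)^\circ$, pass to $V^\perp$, and say that Fourier analysis ``should force these coefficients to concentrate on a diagonal subspace'' of dimension at most $n-1$ — but no mechanism for that bound is offered, and you explicitly flag it as ``the crux of the argument.'' The paper takes a much more direct route: it shows the assignment $a\otimes b\mapsto\phi_X(d_{a\otimes b^\circ}) = (J_{\tau\otimes\tau^\circ}[x_1,\eta],\ldots,J_{\tau\otimes\tau^\circ}[x_n,\eta])$ is $L^2$-bounded, so it extends to a left $\mmop$-module map $\phi\colon L^2(\mmop)\to\overline{\dom(\partial_X^*)}$. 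Then one checks injectivity of $\phi$: if $\phi(\eta)=0$ then $[x_i,\eta]=0$ for all $i$, hence $[y,\eta]=0$ for all $y\in W^*(X)$, and taking $y$ to be the given diffuse element forces $\eta=0$ under the identification $L^2(\mmop)\cong\HS(L^2(M))$. Injectivity of a module map immediately gives $\dim_{\mmop}\overline{\dom(\partial_X^*)}\geq\dim_{\mmop}L^2(\mmop)=1$. No Haar unitary, no Fourier coefficients, and no upper bound on a complement are needed — the diffuseness is used once, at the very end, to kill the kernel of $\phi$.
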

\begin{proof}
We first note that for any elementary tensor $a\otimes b\in M\otimes M^\circ$, $d(\,\cdot\,):=[\ \cdot\ ,a^*\otimes b^*]$ defines an element of $\mathsf{Der}_{1\otimes 1}(X)$ with
	\[
		d^*(1\otimes 1) = a\tau(b) - \tau(a)b.
	\]
Furthermore,
	\[
		\| ( J_{\tau\otimes \tau^\circ} d(x_1),\ldots, J_{\tau\otimes \tau^\circ} d(x_n) )\|_2^2 \leq 2 \sum_{i=1}^n \|x_i\|^2 \|a\otimes b\|_2^2.
	\]
Thus we can extend the map $a\otimes b\mapsto ( J_{\tau\otimes \tau^\circ} d(x_1),\ldots, J_{\tau\otimes \tau^\circ} d(x_n) )$ into a left $\mmop$-module map
	\[
		\phi\colon L^2(\mmop) \to \overline{\dom(\partial_{X}^*)}.
	\]
If $\phi$ is injective, then it will follow that
	\[
		\sigma(X) = \dim_{\mmop} \overline{\dom(\partial_{X}^*)} \geq \dim_{\mmop} L^2(\mmop)=1.
	\]
Suppose $\eta\in L^2(\mmop)$ satisfies $\phi(\eta)=0$. Consequently, $[x_i,\eta]=0$ for $i=1,\ldots, n$ and so it follows that $[y,\eta]=0$ for all $y\in W^*(X)$. Let $y_0\in W^*(X)$ be a diffuse element, which exists by hypothesis. Since we can identify $L^2(\mmop)\cong \HS(L^2(M))$, $[y_0,\eta]=0$ implies $\eta=0$. Thus $\phi$ is injective.
\end{proof}

\section{Relation to Free Entropy.}
\label{sec:relation_to_free_entropy}

We now turn to an examination of how free Stein irregularity and dimension relate to the free Fisher information and non-microstates free entropy dimension(s).

\begin{thm}\label{thm:fisher_info_and_R-bounded_free_Stein_info}
For $R>0$, $\Sigma^*_R(X\colon B)=0$ if and only if $\Phi^*(X\colon B)\leq R^2$.
\end{thm}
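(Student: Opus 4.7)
The plan is to prove both implications by relating the free Stein discrepancy directly to the conjugate variable characterization, with the key tool being weak compactness of norm-bounded sets in $L^2(M)^n$.

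For the easy direction ($\Phi^*(X\colon B)\leq R^2 \Rightarrow \Sigma^*_R(X\colon B) = 0$), I would simply invoke the definition of the free Fisher information: if $\Phi^*(X\colon B) \leq R^2$, then there exists a conjugate variable $\Xi$ with $\|\Xi\|_2^2 = \Phi^*(X\colon B) \leq R^2$. Since $\Xi$ is conjugate to $X$, the constant element $\1$ itself is a free Stein kernel of $X$ relative to $\Xi$ (as noted in the remark before Section~3), so $\Sigma^*(X\mid\Xi\colon B) = \|\1 - \1\|_{\HS} = 0$. Since $\|\Xi\|_2 \leq R$, this $\Xi$ is admissible in the infimum defining $\Sigma^*_R(X\colon B)$, giving $\Sigma^*_R(X\colon B) = 0$.

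For the reverse direction, I would start from $\Sigma^*_R(X\colon B) = 0$ and extract a sequence $(\Xi_k)_{k\in\N} \subset L^2(M)^n$ with $\|\Xi_k\|_2 \leq R$ and $\Sigma^*(X \mid \Xi_k\colon B) \to 0$. By definition this yields free Stein kernels $A_k \in \dom(\J_{X\colon B}^*)$ with $\J_{X\colon B}^*(A_k) = \Xi_k$ and $\|A_k - \1\|_{\HS} \to 0$. Since $\{\Xi_k\}$ lies in the closed $R$-ball of $L^2(M)^n$, which is weakly compact, I pass to a subsequence (still indexed by $k$) so that $\Xi_k \to \Xi$ weakly in $L^2(M)^n$, and lower semicontinuity of the norm under weak convergence gives $\|\Xi\|_2 \leq R$.

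Next, I would verify that the weak limit $\Xi$ is the conjugate variable. For any $P \in \BT^n$, the definition of a free Stein kernel gives
\[
\ang{\Xi_k, \ev_X P}_2 = \ang{A_k, \ev_X\circ \J(P)}_{\HS}.
\]
The left-hand side converges to $\ang{\Xi, \ev_X P}_2$ by weak convergence of $\Xi_k$, while the right-hand side converges to $\ang{\1, \ev_X\circ \J(P)}_{\HS}$ because $A_k \to \1$ in $\|\cdot\|_{\HS}$. Hence
\[
\ang{\Xi, \ev_X P}_2 = \ang{\1, \ev_X\circ \J(P)}_{\HS} \qquad \forall P \in \BT^n,
\]
which is precisely the statement that $\1$ is a free Stein kernel for $X$ relative to $\Xi$, i.e., that $\Xi$ is the conjugate variable to $X$ (over $B$). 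Therefore $\Phi^*(X\colon B) \leq \|\Xi\|_2^2 \leq R^2$.

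The main step requiring care is the weak-compactness extraction and the passage to the limit in the defining identity; once weak convergence of $\Xi_k$ and norm convergence of $A_k$ to $\1$ are in hand, the conjugate-variable identity falls out immediately, and there is no genuine analytic obstacle. No additional machinery beyond Hilbert-space weak compactness is needed.
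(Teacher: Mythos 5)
Your proof is correct and follows essentially the same strategy as the paper's: extract Stein kernels $A_k\to\1$ in $\|\cdot\|_{\HS}$, use boundedness of $\{\Xi_k\}$ to get a weak limit $\Xi$, pass to the limit in the Stein kernel identity to see $\Xi$ is the conjugate variable, and use weak lower semicontinuity of the norm. The only cosmetic difference is that you extract a weakly convergent subsequence via weak compactness whereas the paper observes the whole sequence converges weakly (using density of $\BX$ and uniform boundedness); this does not change the argument in any material way.
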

\begin{proof} 
Suppose $\Sigma^*_R(X\colon B)=0$. Then there exists a sequence $(\Xi^{(k)})_{k\in \N}\subset L^2(\BX)$ such that $\|\Xi^{(k)}\|_2\leq R$ and $\Sigma^*(X\mid \Xi^{(k)}\colon B) < \frac1k$ for all $k\in \N$. Let $A_k$ be a free Stein kernel of $X$ relative to $\Xi^{(k)}$ over $B$ such that $\|A_k - \1\|_{\HS} = \Sigma^*(X\mid \Xi^{(k)} \colon B)$. Then $A_k \to \1$. Hence for every $P\in \BT^n$ we have
	\[
		\lim_{k\to\infty} \<\Xi^{(k)}, \ev_X P\>_2 =\lim_{k\to\infty} \<A_{k}, \ev_X\circ\J(P)\>_{\HS} = \< \1, \ev_X\circ\J (P)\>_{\HS}.
	\]
The density of $\BX$ in $L^2(\BX)$ implies the sequence $(\Xi^{(k)})_{k\in \N}$ (since it is uniformly bounded) converges weakly to some $\Xi\in L^2(\BX)^n$. Moreover, the above limit implies $\Xi$ is the conjugate variable of $X$ with respect to $B$ and
	\[
		\Phi^*(X\colon B) = \|\Xi\|_2^2 \leq \liminf_{k\to\infty} \|\Xi^{(k)}\|_2^2 \leq R^2.
	\]
The converse is immediate.
\end{proof}

The following result is a minor generalization of \cite[Theorem 2.7]{Shl04} (which corresponds to the special case $B=\C$). We state it here using our notation and terminology, but the core idea of the proof is not novel.  

\begin{prop}
Let $S$ be a free semicircular family, free from $\BX$. Then
	\[
		\limsup_{t\to 0 } t \Phi^*(X+\sqrt{t}S\colon B) \leq \Sigma^*(X\colon B)^2.
	\]
\end{prop}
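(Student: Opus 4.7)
The plan is to construct, from a near-optimal free Stein kernel $A$ of $X$ over $B$, an explicit partial conjugate system for $Y_t := X + \sqrt{t}S$ whose $L^2$-norm is controlled in terms of $\|A - \1\|_{\HS}$; since the conjugate variables of $Y_t$ (when they exist) arise as the orthogonal projections of any partial conjugate system onto $L^2(W^*(B\<Y_t\>))$, this yields the desired bound on $\Phi^*(Y_t\colon B)$. Fix $\epsilon > 0$, choose $A \in M_n(L^2(\BX\otimes \BX^\circ))$ with $\|A - \1\|_{\HS}^2 < \Sigma^*(X\colon B)^2 + \epsilon$, let $\Xi = \J_{X\colon B}^*(A)$, and replace each $\xi_i$ by its projection onto $L^2(\BX)$ (which preserves the Stein relation and does not enlarge $\|\Xi\|_2$). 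Set
\[ \zeta_i := \xi_i + \frac{1}{\sqrt{t}}\sum_{j=1}^n (\delta_{ij}\, 1\otimes 1 - A_{ij})\# s_j \qquad (i = 1, \ldots, n). \]

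The main technical step is to verify that $(\zeta_1, \ldots, \zeta_n)$ is a partial conjugate system for $Y_t$ over $B$: $\<\zeta_i, \ev_{Y_t} Q\>_2 = \<1\otimes 1, \ev_{Y_t}\partial_i Q\>_2$ for every $Q \in \BT$. The $\xi_i$-contribution equals $\<\xi_i, E_\BX[\ev_{Y_t} Q]\>_2$ since $\xi_i \in L^2(\BX)$; the Stein relation for $X$ turns this into $\sum_j \<A_{ij}, \partial_j E_\BX[\ev_{Y_t} Q]\>_2$, and combining the commutation $\partial_j \circ E_\BX = E_{\BX\otimes \BX^\circ}\circ \partial_j$ (where on the right-hand side $\partial_j$ denotes the canonical extension of the free difference quotient to $B\<X,S\>$ that kills $S$) with the chain rule $\partial_j \ev_{Y_t} Q = \ev_{Y_t}\partial_j Q$ reduces this to $\sum_j \<A_{ij}, \ev_{Y_t}\partial_j Q\>_2$. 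For the $s$-contribution, the identity $\<a\#s_j, W\>_2 = \<a, \partial_j^{(S)}W\>_2$, valid for $a \in L^2(\BX\otimes \BX^\circ)$ and polynomial $W$ because $s_j$ is its own conjugate variable over $B\<X, S_{-j}\>$ (with $\partial_j^{(S)}$ the free difference quotient in $s_j$ over that subalgebra), combined with the chain rule $\partial_j^{(S)}\ev_{Y_t} Q = \sqrt{t}\,\ev_{Y_t}\partial_j Q$, gives
\[ \frac{1}{\sqrt{t}}\sum_j \<(\delta_{ij}\, 1\otimes 1 - A_{ij})\#s_j, \ev_{Y_t} Q\>_2 = \<1\otimes 1, \ev_{Y_t}\partial_i Q\>_2 - \sum_j \<A_{ij}, \ev_{Y_t}\partial_j Q\>_2. \]
The $A_{ij}$-terms from the two contributions then cancel, leaving precisely $\<1\otimes 1, \ev_{Y_t}\partial_i Q\>_2$.

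Cross terms in $\|\zeta_i\|_2^2$ vanish by freeness ($\xi_i \in L^2(\BX)$ is orthogonal to each $(\delta_{ij}\,1\otimes 1 - A_{ij})\#s_j$, whose summands contain a single centered $s_j$ flanked by elements of $\BX$), while the isometry $\<a\#s_j, b\#s_k\>_2 = \delta_{j=k}\<a, b\>_2$---a direct consequence of the semicircular moment formula and the freeness of $S$ from $\BX$---gives $\sum_i \|\zeta_i\|_2^2 = \|\Xi\|_2^2 + \frac{1}{t}\|A - \1\|_{\HS}^2$. Hence
\[ t\Phi^*(Y_t\colon B) \leq t\sum_i \|\zeta_i\|_2^2 = t\|\Xi\|_2^2 + \|A - \1\|_{\HS}^2 < t\|\Xi\|_2^2 + \Sigma^*(X\colon B)^2 + \epsilon, \]
and taking $\limsup$ as $t \to 0$ followed by $\epsilon \to 0$ yields the claimed inequality.

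The main obstacle is establishing the commutation $\partial_j \circ E_\BX = E_{\BX\otimes \BX^\circ}\circ \partial_j$ on $B\<X, S\>$. Morally this expresses that the free difference quotient in the $X$-variables commutes with integrating out the free semicircular family $S$, but a rigorous verification requires a monomial-by-monomial analysis, exploiting the fact that $E_\BX$ acts on elements of the form $\alpha_0 s_{i_1}\alpha_1\cdots s_{i_d}\alpha_d$ (with $\alpha_k \in \BX$) through the non-crossing pair moment formula for semicircular families free from $\BX$.
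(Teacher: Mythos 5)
Your proof is correct and takes the same essential route as the paper's: from a near-optimal free Stein kernel $A$ for $X$ with partial conjugate variable $\Xi$, build the vector $\zeta = \Xi + \frac{1}{\sqrt{t}}(\1 - A)\#S$, show that its projection onto $L^2(W^*(B\ang{Y_t}))$ is the conjugate variable of $Y_t = X+\sqrt{t}S$, and conclude by contractivity of the conditional expectation. In the paper's notation your $\zeta$ is exactly $Q_t + (\1 - A_t)\#\frac{1}{\sqrt{t}}S$; the paper establishes the required projection property by quoting \cite[Lemma 2.3]{Shl04} (which gives $\mc{E}_t(Q_t) = \mc{E}_t(A_t\#\frac{1}{\sqrt{t}}S)$), whereas you re-derive it from scratch by verifying the conjugate-variable pairing against $B\ang{T}$ directly. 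The ``main obstacle'' you flag --- the commutation $\partial_j\circ E_{\BX} = (E_{\BX}\otimes E_{\BX^\circ})\circ\partial_j$ on $B\ang{X,S}$ --- is precisely the computational heart of that lemma, and your non-crossing-partition sketch is the right way to prove it (one does need to keep track of the ambiguity in choosing $Q'\in\BT$ with $\ev_X Q' = E_{\BX}[\ev_{Y_t}Q]$ when $X$ is not algebraically free over $B$, but as you implicitly use, the pairing against any $A\in\dom(\J^*_{X:B})$ is insensitive to that choice). Two cosmetic improvements in your version: exploiting the orthogonality of $\Xi$ and the $S$-part gives the clean bound $t\Phi^*(Y_t:B) \le \|A-\1\|_{\HS}^2 + t\|\Xi\|_2^2$ with no cross-term, whereas the paper applies the triangle inequality before squaring; and you fix an $\epsilon$-near-optimal kernel and send $\epsilon\to 0$ at the end, whereas the paper lets the norm bound $f(t)$ on $\Xi$ grow slowly as $t\to 0$ --- either handles the double limit correctly. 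One minor note: $\Xi = \J_{X:B}^*(A)$ already lies in $L^2(\BX)^n$ by definition of the adjoint, so the projection step you mention is unnecessary.
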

\begin{proof}
Let $f\colon (0,\infty)\to (0,\infty)$ be any decreasing function such that
	\begin{align*}
		\lim_{t\to 0} f(t)&=+\infty\text{, but}\\
		\lim_{t\to 0} \sqrt{t} f(t) &=0.
	\end{align*}
(E.g. $f(t)=t^{-1/4})$. Then 
	\[
		\lim_{t\to 0} \Sigma^*_{f(t)}(X\colon B)=\Sigma^*(X\colon B).
	\]
For each $t>0$, let $Q_t\in L^2(\BX)^n$ be such that $\|Q_t\|_2\leq f(t)$ and such that there exists a free Stein kernel $A_t$ for $X$ relative to $Q_t$ over $B$ such that 
	\[
		\| A_t - 1\|_{\HS}\leq \Sigma^*_{f(t)}(X\colon B) + t.
	\]
Recall that the conjugate variables to $X+\sqrt{t}S$  with respect to $B$ are $\mc{E}_t(\frac{1}{\sqrt{t}} S)$ where $\mc{E}_t\colon W^*(B\<X,S\>)\to W^*(B\<X+\sqrt{t} S\>)$ is the conditional expectation (cf. \cite[Corollary 3.9]{VoiV}). By the same proof as in \cite[Lemma 2.3]{Shl04}, it follows that
	\[
		\mc{E}_t(Q_t) = \mc{E}_t\paren{ A_t\# \frac{1}{\sqrt{t}}S}.
	\]
Thus
	\begin{align}\label{eqn:Fisher_Stein_comparison}
		\sqrt{t}\Phi^*(X+\sqrt{t}S\colon B)^{\frac12} &= \sqrt{t} \left\| \mc{E}_t\left(\frac{1}{\sqrt{t}}S\right)\right\|_2\nonumber\\
			&\leq \sqrt{t} \left\| \mc{E}_t\left((\1- A_t)\# \frac{1}{\sqrt{t}} S\right)\right\|_2 + \sqrt{t} \|\mc{E}_t(Q_t)\|_2 \nonumber\\
			&\leq \sqrt{t} \left\|(\1- A_t)\# \frac{1}{\sqrt{t}} S\right\|_2 + \sqrt{t}f(t) \nonumber\\
			& = \| \1- A_t\|_{\HS} + \sqrt{t}f(t) \nonumber\\
			& \leq \Sigma^*_{f(t)}(X\colon B) + t + \sqrt{t}f(t).
	\end{align}
This tends to $\Sigma^*(X\colon B)$ as $t\to 0$.
\end{proof}

We remind the reader that the \emph{relative non-microstates free entropy} of $X$ with respect to $B$ is defined as the quantity
	\[
		\chi^*(X\colon B):= \frac12 \int_0^\infty \frac{n}{1+t} - \Phi^*(X+\sqrt{t} S\colon B)\ dt + \frac{n}{2} \log(2\pi e),
	\]
where $S$ is a free semicircular family free from $\BX$ (cf. \cite[Definition 7.1]{VoiV}). The following is a minor generalization of \cite[Corollary 2.8]{Shl04}. As with the previous result, we state it using our notation and terminology, but the core idea of the proof is not novel.

\begin{prop}\label{prop:perturbed_chi_star_less_than_Stein}
Let $S$ be a free semicircular family free from $X$. Then
	\[
		\limsup_{\epsilon\to0} \frac{\chi^*(X+\sqrt{\epsilon}S\colon B)}{\frac12 \log{\epsilon}} \leq \Sigma^*(X\colon B)^2.
	\]
\end{prop}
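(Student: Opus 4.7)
The plan is to mimic the strategy of \cite[Corollary~2.8]{Shl04}, using the preceding proposition to control $\Phi^*(X+\sqrt{u}\tilde S:B)$ near $u=0$ and then plug this into the integral formula defining $\chi^*$.

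First I would rewrite $\chi^*(X+\sqrt{\epsilon}S:B)$ in a form involving $\Phi^*(X+\sqrt{u}\tilde S:B)$ with a single perturbation of $X$. Pick a standard semicircular family $S'$ free from both $\BX$ and $S$; then $\sqrt{\epsilon}S+\sqrt{t}S'$ has the same distribution as $\sqrt{t+\epsilon}\tilde S$ for a standard semicircular $\tilde S$ free from $\BX$. Applying the definition of $\chi^*$ to the tuple $X+\sqrt{\epsilon}S$ and using $S'$ as the added semicircular gives
\[
\chi^*(X+\sqrt{\epsilon}S:B) = \frac12\int_0^\infty\left(\frac{n}{1+t}-\Phi^*(X+\sqrt{t+\epsilon}\tilde S:B)\right)dt + \frac{n}{2}\log(2\pi e).
\]

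Next, fix $\eta>0$. By the previous proposition there exists $t_0>0$ such that $u\Phi^*(X+\sqrt{u}\tilde S:B)\leq \Sigma^*(X:B)^2+\eta$ whenever $0<u<t_0$. Assume $\epsilon<t_0$ and split the integral at $t=t_0-\epsilon$. On $(0,t_0-\epsilon)$ we use the bound $\Phi^*(X+\sqrt{t+\epsilon}\tilde S:B)\leq(\Sigma^*(X:B)^2+\eta)/(t+\epsilon)$, and on $(t_0-\epsilon,\infty)$ we use the Stam-type semicircular bound $\Phi^*(X+\sqrt{t+\epsilon}\tilde S:B)\leq n/(t+\epsilon)$ (which follows from freeness of $\tilde S$ from $\BX$ together with $\Phi^*(\sqrt{t+\epsilon}\tilde S:B)=n/(t+\epsilon)$ and the free Stam inequality).

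The first piece yields
\[
\frac12\int_0^{t_0-\epsilon}\!\left(\frac{n}{1+t}-\frac{\Sigma^*(X:B)^2+\eta}{t+\epsilon}\right)dt = \tfrac12(\Sigma^*(X:B)^2+\eta)\log\epsilon+O(1),
\]
and the second piece yields
\[
\frac12\int_{t_0-\epsilon}^\infty\!\left(\frac{n}{1+t}-\frac{n}{t+\epsilon}\right)dt = -\frac{n}{2}\log\frac{1+t_0-\epsilon}{t_0} = O(1),
\]
where in both estimates the $O(1)$ terms are bounded uniformly in $\epsilon$ as $\epsilon\to 0$. Combining gives the lower bound
\[
\chi^*(X+\sqrt{\epsilon}S:B) \geq \tfrac12(\Sigma^*(X:B)^2+\eta)\log\epsilon + O(1).
\]
Dividing by $\tfrac12\log\epsilon$ (which is negative for small $\epsilon$, so the inequality reverses) and letting $\epsilon\to 0$, the $O(1)/\tfrac12\log\epsilon$ term vanishes, leaving
\[
\limsup_{\epsilon\to 0}\frac{\chi^*(X+\sqrt{\epsilon}S:B)}{\tfrac12\log\epsilon}\leq \Sigma^*(X:B)^2+\eta.
\]
Since $\eta>0$ was arbitrary, the conclusion follows.

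The only point requiring any care is the sign bookkeeping: $\tfrac12\log\epsilon\to-\infty$, so one must track that the bound on $\Phi^*$ produces a \emph{lower} bound on $\chi^*(X+\sqrt\epsilon S:B)$, which then becomes an \emph{upper} bound on the ratio after dividing by the negative quantity $\tfrac12\log\epsilon$. The only nontrivial ingredient beyond the previous proposition is the free Stam-type inequality used for $t$ bounded away from $0$, which is a standard consequence of free convolution with a semicircular.
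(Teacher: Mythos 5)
Your proof is correct, but it takes a genuinely different route from the paper. The paper's proof does not invoke the preceding proposition at all; instead, it fixes an arbitrary free Stein kernel $A$ with $\J^*_{X:B}(A) = Q$ and derives the explicit pointwise bound
\[
\Phi^*(X + \sqrt{t}\,S : B) \le \frac{1}{t}\norm{\1 - A}_{\HS}^2 + \frac{2}{\sqrt t}\norm{\1-A}_{\HS}\norm{Q}_2 + \norm{Q}_2^2
\]
valid for all $t>0$, by decomposing $\frac{1}{\sqrt t}S = (\1 - A)\#\frac{1}{\sqrt t}S + A\#\frac{1}{\sqrt t}S$ and applying the conditional expectation $\mc{E}_t$. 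After the same change of variable $t\mapsto t-\epsilon$ that you use (justified, as you note, by writing the added semicircular as $\sqrt{t+\epsilon}\,\tilde S$), this bound is integrated in one shot over $[\epsilon,1]$, producing a leading $\log\epsilon\,\norm{\1-A}_{\HS}^2$ term with the cross and constant terms giving $O(1)$ and $O(\sqrt\epsilon)$ corrections; the infimum over Stein kernels is taken at the very end. Your argument instead treats the preceding proposition as a black box, extracts a $t_0$ from its limsup, splits the integral at $t_0-\epsilon$, and uses the free Stam/Corollary~6.14 bound $\Phi^*(X+\sqrt{u}\,\tilde S:B)\le n/u$ on the far piece, followed by an $\eta\to 0$ argument. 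Both are sound; the paper's version avoids the split (the one pointwise bound covers all $t$) and yields an inequality $\limsup \le \norm{\1-A}_{\HS}^2$ for each fixed kernel before optimizing, while your version is more modular and reuses the previous result directly. One small care point in your writeup: to apply the definition of $\chi^*$ to $X+\sqrt\epsilon S$ you need $S'$ free from all of $W^*(B\ang{X},S)$, not merely free from $\BX$ and $S$ separately; this is harmless to arrange but worth stating precisely.
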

\begin{proof}
Using \cite[Corollary 6.14]{VoiV} and implementing the change of variable $t\mapsto t-\epsilon$ in the integral appearing in the above definition of $\chi^*$, we obtain
	\[
	 	\limsup_{\epsilon\to0} \frac{\chi^*(X+\sqrt{\epsilon}S\colon B)}{\frac12 \log{\epsilon}} =\limsup_{\epsilon\to 0} \frac{1}{\log{\epsilon}} \int_\epsilon^1 \frac{n}{1+t-\epsilon} - \Phi^*(X+\sqrt{t}S\colon B)\ dt.
	\]
Now, for any free Stein kernel $A$ relative to some $Q$ over $B$ we have
	\begin{align*}
		\Phi^*(X+\sqrt{t} S\colon B) &\leq \left( \| \mc{E}_t( (\1-A)\#\frac{1}{\sqrt{t}}S)\|_2 + \|\mc{E}_t(A\#\frac{1}{\sqrt{t}} S)\|_2 \right)^2\\
		& \leq \frac1t \|\1 - A\|_{\HS}^2 + \frac{2}{\sqrt{t}} \|\1-A\|_{\HS}\|Q\|_2 + \|Q\|_2^2.
	\end{align*}
Thus
	\begin{align*}
		\int_\epsilon^1 \frac{n}{1+t-\epsilon} - \Phi^*(X+\sqrt{t}S\colon B)\ dt &\geq  n\log(2-\epsilon) + \log(\epsilon)\|\1 - A\|_{\HS}^2\\
																			&\qquad - 4 (1- \sqrt{\epsilon})\|\1-A\|_{\HS}\|Q\|_2 - (1-\epsilon) \|Q\|_2^2.
	\end{align*}
Since $\log(\epsilon)<0$ for $\epsilon<1$, this in turn implies
	\[
		\limsup_{\epsilon\to0} \frac{\chi^*(X+\sqrt{\epsilon}S\colon B)}{\frac12 \log{\epsilon}} \leq \|\1 -A\|_{\HS}^2.
	\]
Since $A$ was an arbitrary free Stein kernel over $B$, we obtain the desired inequality.
\end{proof}

We remind the reader that the there are two versions of the \emph{relative non-microstates free entropy dimension} of $X$ with respect to $B$:
\[
	\delta^*(X\colon B) := n - \liminf_{\epsilon\to 0} \frac{\chi^*(X+\sqrt{\epsilon} S\colon B)}{\frac12 \log{\epsilon}}\qquad\qquad\delta^\star(X\colon B):= n - \liminf_{\epsilon\to 0} t \Phi^*(X+\sqrt{t}S\colon B),
\]
where $S$ is a free semicircular family free from $\BX$; moreover, $\delta^*(X:B) \leq \delta^\star(X:B)$ (cf. \cite[Section 4.1.1]{CS05}\footnote{Although this paper was interested only in the case $B=\C$, the idea generalizes straightforwardly by using the relative versions of $\chi^*$ and $\Phi^*$. }).
Thus from Proposition~\ref{prop:perturbed_chi_star_less_than_Stein} we obtain:

\begin{cor}\label{cor:free_Stein_info_and_free_entropy_dim}
For any $*$-algebra $B$ and $n$-tuple $X$,
	\[
		 \sigma(X\colon B) \leq \delta^*(X\colon B) \leq \delta^\star(X\colon B).
	\]
\end{cor}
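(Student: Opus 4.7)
The plan is to observe that both inequalities essentially fall out of already-established facts, so this corollary is largely a matter of bookkeeping.

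For the first inequality, I would rewrite the desired statement $\sigma(X\colon B) \leq \delta^*(X\colon B)$ in terms of the definitions: since $\sigma(X\colon B) = n - \Sigma^*(X\colon B)^2$ and $\delta^*(X\colon B) = n - \liminf_{\epsilon\to 0}\frac{\chi^*(X+\sqrt{\epsilon}S\colon B)}{\frac12 \log\epsilon}$, the inequality is equivalent to
\[
\liminf_{\epsilon\to 0}\frac{\chi^*(X+\sqrt{\epsilon}S\colon B)}{\frac12 \log\epsilon} \leq \Sigma^*(X\colon B)^2.
\]
But Proposition~\ref{prop:perturbed_chi_star_less_than_Stein} gives the stronger bound with $\limsup$ in place of $\liminf$, and since $\liminf \leq \limsup$, the desired inequality follows immediately.

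For the second inequality $\delta^*(X\colon B) \leq \delta^\star(X\colon B)$, I would invoke the corresponding relative version of the argument in \cite[Section 4.1.1]{CS05}. The core idea is that $\chi^*$ is the integral of a Fisher-information type quantity, and dividing by $\frac12 \log \epsilon$ and taking $\liminf$ against the asymptotic behavior of $\Phi^*(X+\sqrt{t}S\colon B)$ for small $t$ produces the stated comparison. Since no new structure is needed beyond what the scalar case provides, this reduces to citing the reference (as is already flagged in the footnote).

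I do not foresee a serious obstacle: both inequalities are formal consequences of material developed earlier in the excerpt. The proof is therefore essentially a one-line invocation of Proposition~\ref{prop:perturbed_chi_star_less_than_Stein} together with the known comparison between $\delta^*$ and $\delta^\star$.
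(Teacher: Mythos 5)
Your proof is correct and is precisely the argument the paper intends: the first inequality is a direct rewriting via $\liminf \leq \limsup$ applied to Proposition~\ref{prop:perturbed_chi_star_less_than_Stein}, and the second is the already-cited relative version of the comparison from \cite{CS05}. Nothing is missing.
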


	Recall that in the self-adjoint one-variable case $X=(x)$, one has
	\[\delta^*(x) = \delta^\star(x) = 1 - \sum_{t\in\R}\mu(\set{t})^2\]
	by \cite[Proposition 6.3]{VoiII} and \cite[Propositions 7.5 and 7.6]{VoiV}, where $\mu$ is the distribution of $x$ on $\R$. Thus, in particular, the following theorem shows that the above inequalities are in fact equalities.
		
\begin{thm}\label{thm:one_variable_computation}
Let $x\in (M,\tau)$ be self-adjoint with distribution $\mu$ on $\R$. Then
	\[
		\Sigma^*(x)^2 = \sum_{t\in \R} \mu(\{t\})^2.
	\]
Consequently, $\Sigma^*(x)=0$ if and only if $x$ has no atoms.
\end{thm}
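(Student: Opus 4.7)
The plan is to combine a lower bound on $\sigma(x) = 1 - \Sigma^*(x)^2$ obtained from antisymmetric vectors in $L^2(\mmop)$ with an upper bound coming from the already-established comparison $\sigma(x) \leq \delta^*(x)$. Since $x$ is self-adjoint I may assume without loss of generality that $M = W^*(x) \cong L^\infty(\R, \mu)$, which is abelian; then $\mmop \cong L^\infty(\R^2, \mu\times\mu)$, and under the corresponding identification $L^2(\mmop) \cong L^2(\R^2, \mu\times\mu)$, the element $\ev_X\circ\partial p$ for a polynomial $p$ is the symmetric function $(s,t) \mapsto (p(s)-p(t))/(s-t)$. Closed submodules of $L^2(\R^2, \mu\times\mu)$ over $L^\infty(\R^2, \mu\times\mu)$ are of the form $1_E \cdot L^2$ for some measurable $E\subset \R^2$, with von Neumann dimension equal to $(\mu\times\mu)(E)$. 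By Theorem~\ref{thm:free_Stein_info_is_dim} it therefore suffices to identify the measurable set $E$ corresponding to $\overline{\dom(\partial^*)}$ and show $(\mu\times\mu)(E) = 1 - \sum_t \mu(\{t\})^2$.

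For the lower bound, the element $x\otimes 1 - 1\otimes x \in L^2(\mmop)$ corresponds to the antisymmetric function $(s,t)\mapsto s-t$. Because every $\ev_X\circ\partial p$ is symmetric in $(s,t)$, antisymmetric functions pair trivially with the range of $\ev_X\circ\partial$ and hence lie in $\ker(\partial^*) \subseteq \dom(\partial^*)$. Lemma~\ref{lem:action_on_domain_of_adjoint} gives that $\dom(\partial^*)$ is invariant under the left action of $\C\<x\>\otimes \C\<x\>^\circ$, and by Kaplansky density this passes to the $L^2$-closure as invariance under the left action of $\mmop$. Thus $\overline{\dom(\partial^*)}$ contains the closed cyclic submodule generated by $s-t$. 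A truncation argument (approximating an $L^2$ function $f$ vanishing on the diagonal by $(s-t)\cdot h_n$, with $h_n = (f/(s-t))\,\mathbf{1}_{\{|f/(s-t)|\leq n\}} \in L^\infty$, and applying dominated convergence) identifies this cyclic submodule with $L^2(\R^2\setminus \Delta, \mu\times\mu)$, where $\Delta = \{(t,t): t\in\R\}$ is the diagonal. Since the $\mu\times\mu$-measure of the diagonal is exactly $\sum_{t\in\R}\mu(\{t\})^2$, this yields
\[\sigma(x) \geq (\mu\times\mu)(\R^2 \setminus \Delta) = 1 - \sum_{t\in\R}\mu(\{t\})^2.\]

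For the upper bound, I invoke Corollary~\ref{cor:free_Stein_info_and_free_entropy_dim} to obtain $\sigma(x) \leq \delta^*(x)$, and then use the classical one-variable computation $\delta^*(x) = 1 - \sum_{t\in\R}\mu(\{t\})^2$ (from \cite[Proposition 6.3]{VoiII} and \cite[Propositions 7.5 and 7.6]{VoiV}), which the paper recalls just before the statement of the theorem. Combining both bounds gives $\sigma(x) = 1 - \sum_t \mu(\{t\})^2$, whence $\Sigma^*(x)^2 = 1 - \sigma(x) = \sum_t \mu(\{t\})^2$. The main technical point is the cyclic submodule calculation: the antisymmetry of $s-t$ gets the generator into $\dom(\partial^*)$ essentially for free, but one needs the abelian module structure together with the truncation step to conclude that the whole off-diagonal $L^2$ space is obtained in the closure; everything else is either unpacking of the abelian identification or citation of results already established in the paper.
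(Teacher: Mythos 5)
Your proof is correct, but it takes a genuinely different route from the paper for the inequality $\Sigma^*(x)^2 \le \sum_t \mu(\{t\})^2$. The paper argues constructively: it introduces the explicit approximate conjugate variables $g_\epsilon(t) = 2\int (t-s)/((t-s)^2+\epsilon^2)\,d\mu(s)$ with corresponding free Stein kernels $A_\epsilon(t,s) = (t-s)^2/((t-s)^2+\epsilon^2)$, and directly estimates $\|A_\epsilon - \1\|_{L^2(\mu\times\mu)}^2$, showing the limit as $\epsilon\to 0$, $\delta\to 0$ is $(\mu\times\mu)(\Delta) = \sum_t\mu(\{t\})^2$. You instead exploit the abelian module structure: after identifying $\mmop\cong L^\infty(\R^2,\mu\times\mu)$, you observe that the antisymmetric function $s-t$ lies in $\ker(\partial^*)\subset\dom(\partial^*)$ because the range of $\ev_X\circ\partial$ consists of symmetric functions, then use the $L^\infty$-module invariance from Lemma~\ref{lem:action_on_domain_of_adjoint} and a truncation argument to show the cyclic submodule generated by $s-t$ is all of $L^2(\R^2\setminus\Delta,\mu\times\mu)$, and finally invoke the dimension formula of Theorem~\ref{thm:free_Stein_info_is_dim}. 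Your argument is more structural and in some sense more in the spirit of the paper's Theorem~\ref{thm:free_Stein_info_is_dim} and Section~\ref{sec:derivations}, since it locates the lower bound on $\sigma(x)$ directly inside the module $\overline{\dom(\partial^*)}$ rather than by competing against $\1$ with an explicit kernel; the paper's explicit $g_\epsilon$/$A_\epsilon$ construction, on the other hand, is reused elsewhere (e.g.\ Appendix~\ref{app:examples}), so it is not wasted effort. The direction $\Sigma^*(x)^2 \ge \sum_t\mu(\{t\})^2$ is handled identically in both, by citing Corollary~\ref{cor:free_Stein_info_and_free_entropy_dim} together with the classical one-variable computation of $\delta^*$. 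One minor point worth making explicit: your reduction to $M = W^*(x)$ is justified because free Stein kernels are by definition valued in $L^2(\C\ang{x}\otimes\C\ang{x}^\circ)$ and $\Xi$ may be replaced by its projection onto $L^2(W^*(x))$, so $\Sigma^*(x)$ indeed depends only on $(W^*(x),\tau|_{W^*(x)})$.
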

\begin{proof}
Recall that in the one-variable case
	\[
		\liminf_{\epsilon\to 0} \frac{\chi^*(X+\sqrt{\epsilon} S\colon B)}{\frac12 \log{\epsilon}} = \sum_{t\in \R} \mu(\{t\})^2.
	\]
Thus Corollary~\ref{cor:free_Stein_info_and_free_entropy_dim} implies
	\[
		\sum_{t\in \R} \mu(\{t\})^2\leq \Sigma^*(x)^2.
	\]
To see the reverse inequality, consider for $\epsilon>0$ the function
	\[
		g_\epsilon(t):= 2\int_\R \frac{(t-s)}{(t-s)^2 +\epsilon^2}\ d\mu(s).
	\]
Observe that $|g_\epsilon(t)| \leq \frac{2}{\epsilon^2}( |t| + \tau(|x|))\in L^2(\mu)$. In particular, for any polynomial $p$ we have
	\begin{align*}
		\int g_\epsilon(t) p(t)\ d\mu(t) &= 2 \iint \frac{(t-s)p(t)}{(t-s)^2 + \epsilon^2}\ d\mu(s)d\mu(t)\\
			&= \iint \frac{(t-s)(p(t) - p(s))}{(t-s)^2 + \epsilon^2}\ d\mu(s)d\mu(t)\\
			&= \iint \frac{(t-s)^2}{(t-s)^2 + \epsilon^2} \frac{p(t) - p(s)}{t-s}\ d\mu(s)d\mu(t).
	\end{align*}
That is, $A_\epsilon(t,s):=\frac{(t-s)^2}{(t-s)^2 + \epsilon^2}$ is a free Stein kernel for $x$ relative to $g_\epsilon$. So we compute for $\delta>0$
	\begin{align*}
		\Sigma^*(x)^2 \leq \|A_\epsilon - \1\|^2_{L^2(\mu)} &= \iint | A_\epsilon(t,s) - \1|^2\ d\mu(t)d\mu(s)\\
			&= \iint \frac{\epsilon^4}{( (t-s)^2 + \epsilon^2)^2}\ d\mu(t)d\mu(s)\\
			&\leq \iint_{|t-s|\geq \delta} \frac{\epsilon^4}{\delta^4}\ d\mu(t)d\mu(s) + \iint_{|t-s|<\delta} 1\ d\mu(t)d\mu(s)\\
			&\leq \frac{\epsilon^4}{\delta^4} + (\mu\otimes \mu)(\{(t,s)\in \R^2\colon |t-s|<\delta\}).
	\end{align*}
Letting first $\epsilon$ tend to zero and then $\delta$, we obtain the other inequality.
\end{proof}

\begin{rem}
As a particular example of Theorem~\ref{thm:one_variable_computation}, for $x\in M_{N}(\C)_{s.a.}$ we have
	\[
		\Sigma^*(x)^2 = \sum_{j=1}^k \frac{m_j^2}{N^2},
	\]
where $k\leq n$ is the number of distinct eigenvalues of $x$ with respective multiplicities $m_1,\ldots, m_k\in \N$.
\end{rem}

The inequalities in Corollary~\ref{cor:free_Stein_info_and_free_entropy_dim} also enable us to prove  the following.

\begin{cor}\label{cor:full_free_Stein_dimension_iff_closable}
Suppose $M=W^*(X)$. Then $\sigma(X)=n$ if and only if $\J$ gives a densely defined closable operator
	\[
		\J\colon L^2(M)^n \to M_n(L^2(\mmop)), 
	\]
and if and only if $\partial$ gives a densely defined closable operator
	\[
		\partial \colon L^2(M)\to L^2(\mmop)^n.	
	\]
\end{cor}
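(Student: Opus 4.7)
The plan is to combine the dimension formula of Theorem~\ref{thm:free_Stein_info_is_dim} with the standard Hilbert-space fact that a densely defined operator is closable if and only if its adjoint has dense domain.

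First I would compute the total Murray--von Neumann dimensions of the two ambient modules. Since $L^2(\mmop)$ is the standard form of $\mmop$, we have $\dim_{\mmop}(L^2(\mmop)^n) = n$. Likewise, identifying $M_n(\mmop) \cong \mmop \otimes M_n(\C)$ with its tracial state $\tau \otimes \tau^\circ \otimes \tfrac{1}{n}\Tr$, the space $M_n(L^2(\mmop))$ equipped with $\|\cdot\|_{\HS}$ is (up to a constant rescaling of the inner product) $n$ copies of the standard form, so $\dim_{M_n(\mmop)}(M_n(L^2(\mmop))) = n$. Combined with Theorem~\ref{thm:free_Stein_info_is_dim}, the condition $\sigma(X) = n$ becomes
\[
	\dim_{\mmop}\overline{\dom(\partial_X^*)} = n
	\quad\Longleftrightarrow\quad
	\dim_{M_n(\mmop)}\overline{\dom(\J_X^*)} = n.
\]
Since a closed submodule in a finite Hilbert von Neumann module whose dimension matches that of the ambient space must equal the ambient space (the orthogonal complement has dimension $0$ and is therefore trivial), each equality is in turn equivalent to $\dom(\partial_X^*)$, respectively $\dom(\J_X^*)$, being dense in its ambient Hilbert space.

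The remaining step is to translate density of the adjoint's domain into the statement that $\partial$ (respectively $\J$) is a densely defined closable operator on $L^2(M)$. Here there is a subtle point one must not skip: when $X$ satisfies nontrivial algebraic relations, it is not \emph{a priori} clear that the formal derivation $\partial$ on $\BT$ descends to a well-defined map on $\BX$. However, if $\dom(\partial_X^*)$ is dense and $p \in \BT$ satisfies $\ev_X(p) = 0$, then for every $a \in \dom(\partial_X^*)$ one has
\[
	\ang{a, \ev_X \circ \partial(p)}_2 = \ang{\partial_X^*(a), \ev_X(p)}_2 = 0,
\]
and density forces $\ev_X \circ \partial(p) = 0$. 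Hence $\partial$ descends to a densely defined operator on $L^2(M)$ whose adjoint is densely defined, which is exactly the closability condition. The converse is immediate. For $\J$, the identification $\overline{\dom(\J_X^*)} \cong \overline{\dom(\partial_X^*)}^n$ noted just before Theorem~\ref{thm:free_Stein_info_is_dim} reduces the matrix assertion to the componentwise one, and the analogous descent argument applies.

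The one genuinely delicate point is this last translation---ensuring that well-definedness of $\partial$ and $\J$ on $\BX$ is a \emph{consequence} of density of the adjoint domain, rather than an unstated hypothesis; everything else follows mechanically from Theorem~\ref{thm:free_Stein_info_is_dim} together with the faithfulness of Murray--von Neumann dimension on proper inclusions of closed submodules in finite von Neumann modules.
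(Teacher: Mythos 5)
Your proof is correct, and it takes a genuinely different route from the paper's at the one delicate point you identify. The paper handles well-definedness of $\partial$ on $\BX$ by observing that $\sigma(X)=n$ forces $\delta^*(X)=n$ via Corollary~\ref{cor:free_Stein_info_and_free_entropy_dim}, and then invoking the result from \cite{CS16} that full non-microstates free entropy dimension implies $X$ satisfies no algebraic relations; only after that is $\partial$ a genuine densely defined operator whose closability is equivalent to dense domain of the adjoint. You instead bypass \cite{CS16} entirely: once Theorem~\ref{thm:free_Stein_info_is_dim} and faithfulness of Murray--von Neumann dimension give that $\dom(\partial_X^*)$ is dense in $L^2(\mmop)^n$, you deduce $\ev_X\circ\partial(p)=0$ whenever $\ev_X(p)=0$ by the orthogonality computation $\ang{a,\ev_X\circ\partial(p)}_2=\ang{\partial_X^*(a),\ev_X(p)}_2=0$ for all $a\in\dom(\partial_X^*)$. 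This is a pleasingly self-contained and elementary descent argument, and it is arguably cleaner than the paper's appeal to an external regularity theorem; the trade-off is that the paper's route is shorter and simultaneously records the additional fact that $\sigma(X)=n$ implies algebraic freeness of $X$. The converse direction and the reduction of the $\J$ statement to the $\partial$ statement via $\overline{\dom(\J_X^*)}\cong\overline{\dom(\partial_X^*)}^n$ match the paper.
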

\begin{proof}
	Let us suppose that $\sigma(X) = n$; since $\sigma(X) \leq \delta^*(X) \leq n$, $X$ has full free entropy dimension.
	It then follows from \cite{CS16} that $X$ satisfies no algebraic relation, and hence we may view $\J$ and $\partial$ as densely defined operators with the above domains and codomains.
	Moreover, by Theorem~\ref{thm:free_Stein_info_is_dim}, $\J^*$ and $\partial^*$ are densely defined, whence $\J$ and $\partial$ are closable.

	Contrariwise, when either $\J$ or $\partial$ gives a linear operator, its adjoint as an unbounded operator and its adjoint arising from evaluation of polynomials in $\C\ang{T}$ agree. The closability of $\J$ or $\partial$ is then equivalent to their adjoints having dense domains, and so Theorem~\ref{thm:free_Stein_info_is_dim} yields the result.
\end{proof}

This also allows us to reword Theorem~\ref{thm:extension_to_closure_of_domain} as follows:

\begin{cor}
	Let $M=W^*(X)$. Suppose $\sigma(X) = n$.
	Then for any $Y=(y_1,\ldots,y_m)\in \left(\dom(\bar\partial)\cap M\right)^m$ with $\bar\partial(y_j) \in (\mmop)^n$ for each $j=1,\ldots, m$, we have $\sigma(X,Y) = n$.
\end{cor}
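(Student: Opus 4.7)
The plan is to observe that this corollary is essentially a direct synthesis of Theorem~\ref{thm:extension_to_closure_of_domain} and Corollary~\ref{cor:full_free_Stein_dimension_iff_closable}, and so the proof reduces to verifying that the hypothesis $\sigma(X) = n$ supplies exactly the closability premise needed to invoke Theorem~\ref{thm:extension_to_closure_of_domain}.

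First, I would apply Corollary~\ref{cor:full_free_Stein_dimension_iff_closable}: since $M = W^*(X)$ and $\sigma(X) = n$, that corollary guarantees that $\partial$ gives a densely defined closable operator $\partial \colon L^2(M) \to L^2(\mmop)^n$. In particular, it makes sense to speak of $\bar\partial$ and of the condition $\bar\partial(y_j) \in (\mmop)^n$, which matches the standing assumptions on $Y$.

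Next, with this closability in hand, I would invoke Theorem~\ref{thm:extension_to_closure_of_domain} directly to conclude $\sigma(X,Y) = \sigma(X)$. Since by hypothesis $\sigma(X) = n$, this yields $\sigma(X,Y) = n$, as desired.

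The only subtle point, and the main thing to double-check, is that nothing more needs to be verified about the enlarged tuple $(X,Y)$: in particular, that $\sigma(X,Y)$ is well-defined without prior knowledge of algebraic freeness of $(X,Y)$, and that we need not separately argue $W^*(X,Y) = M$. Both are fine: the free Stein dimension is defined for arbitrary tuples via Definition~\ref{def:free_Stein_dim}, and since $Y \in (\dom(\bar\partial)\cap M)^m \subseteq M$, we have $W^*(X,Y) = W^*(X) = M$. Hence Theorem~\ref{thm:extension_to_closure_of_domain} applies verbatim and the corollary follows with no further work. There is no genuine obstacle here — the content of the statement is simply to repackage Theorem~\ref{thm:extension_to_closure_of_domain} in the language of the dimension equality established by Corollary~\ref{cor:full_free_Stein_dimension_iff_closable}.
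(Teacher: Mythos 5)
Your proposal is correct and matches the paper's intended argument exactly: the corollary is stated immediately after Corollary~\ref{cor:full_free_Stein_dimension_iff_closable} as a "rewording" of Theorem~\ref{thm:extension_to_closure_of_domain}, obtained precisely by translating the hypothesis $\sigma(X)=n$ into closability of $\partial$ and then invoking that theorem. Your additional remark that $W^*(X,Y)=W^*(X)=M$ is a sensible sanity check but adds nothing beyond the paper's implicit reasoning.
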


\subsection{Regularity hierarchy}

Let us relate the condition $\sigma(X)=n$ to other well-studied regularity conditions. We have the following picture:
	\[
		\begin{tikzpicture}[thick, scale=1.2]

		\node at (0,0) {$\Phi^*(X)<\infty$};
		\draw[thick] (-0.9,-0.25) rectangle (0.9,0.25);

		\node[rotate=30] at (1.2,0.4) {$\implies$};
		\node[rotate=-30] at (1.2,-0.4) {$\implies$};

		\node at (2.5,0.5) {$\chi^*(X)>-\infty$};
		\draw[thick] (1.5,0.25) rectangle (3.5,0.75);

		\node at (2.5,-0.5) {$\sigma(X)=n$};
		\draw[thick] (1.5,-0.25) rectangle (3.5,-0.75);

		\node[rotate=-30] at (3.8,0.35) {$\implies$};
		\node[rotate=30] at (3.8,-0.35) {$\implies$};

		\node at (4.9,0) {$\delta^*(X)=n$};
		\draw[thick] (4.1,-0.25) rectangle (5.7,0.25);

		\end{tikzpicture}
	\]
The top two arrows are of course well-known results: the first is \cite[Proposition 7.9]{VoiV} while the second follows from \cite[Proposition 7.5]{VoiV} and the definition of $\delta^*$ in \cite[Section 4.1.1]{CS05}. The bottom two arrows follow from Theorem \ref{thm:fisher_info_and_R-bounded_free_Stein_info} and Corollary \ref{cor:free_Stein_info_and_free_entropy_dim}, respectively. Thus it is natural to ask what the relationship is between having finite non-microstates free entropy and having full free Stein dimension. In the case $n=1$, we see that the former implies the latter by Theorem~\ref{thm:one_variable_computation}.

\begin{rem}
The above raises some interesting questions:
\begin{enumerate}[label=\arabic*.]
		\item Does $\chi^*(X)>-\infty$ imply $\sigma(X)=n$ in general?
		
		\item Does $\sigma(X)=\delta^*(X)$ in general?
	\end{enumerate}
	We begin to investigate the first question below; then, in Section~\ref{sec:computations}, we exhibit some cases where the equality in the second question holds.
\end{rem}

In order to be begin analyzing the relationship between these two conditions, consider the the following quantity:
	\begin{align}\label{eqn:alpha}
		\alpha:=\limsup_{R\to\infty} \frac{\ln \Sigma_R^*(X)}{\ln R} \in [-\infty, 0].
	\end{align}
That is, $\alpha$ compares how quickly $\Sigma_R^*(X)$ decays as $R$ grows. Note that if $\Sigma^*(X)\neq 0$ we have $\alpha=0$; however, it may be that $\alpha=0$ even when $\Sigma^*(X)=0$. Indeed, consider the Example~\ref{ex:infinite_entropy_full_dimension} below.

\begin{prop}\label{prop:negative_alpha}
With $\alpha$ as above, if $\alpha<0$ then $\chi^*(X)>-\infty$.
\end{prop}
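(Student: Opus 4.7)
The strategy is to exploit the Fisher-Stein comparison inequality $(\ref{eqn:Fisher_Stein_comparison})$ that appeared in the proof of the earlier bound on $\limsup_{t\to 0} t\Phi^*(X + \sqrt{t}S\colon B)$, namely
$$\sqrt{t}\,\Phi^*(X + \sqrt{t}S)^{1/2} \leq \Sigma_{f(t)}^*(X) + t + \sqrt{t}f(t),$$
valid for any decreasing $f\colon (0,\infty)\to(0,\infty)$ with $f(t)\to\infty$ and $\sqrt{t}f(t)\to 0$ as $t\to 0$. The polynomial decay hypothesis $\alpha < 0$ will allow me to choose $f$ so that this inequality forces $\Phi^*(X+\sqrt{t}S)$ to be integrable near $t=0$, which combined with the standard tail bound will give $\chi^*(X) > -\infty$.

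Concretely, I would first fix any $\alpha' \in (\alpha, 0)$ and, using the definition of $\alpha$ as a limsup, choose $R_0 > 1$ so that $\Sigma_R^*(X) \leq R^{\alpha'}$ for all $R \geq R_0$. Next I pick any $\beta \in (0, 1/2)$ and set $f(t) = t^{-\beta}$, which satisfies the required asymptotic conditions. For $t$ small enough that $f(t)\geq R_0$, the hypothesis yields $\Sigma_{f(t)}^*(X) \leq t^{-\alpha'\beta}$, and $-\alpha'\beta > 0$ ensures this tends to zero as $t\to 0$. Squaring the comparison inequality and dividing by $t$ then gives a pointwise bound of the form
$$\Phi^*(X + \sqrt{t}S) \leq C\paren{t^{-1 - 2\alpha'\beta} + t + t^{-2\beta}}$$
on some interval $(0, t_0]$. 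Since $-1-2\alpha'\beta > -1$ (as $\alpha'\beta < 0$) and $-2\beta > -1$ (as $\beta < 1/2$), the right-hand side is integrable near $0$.

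For the tail contribution on $[t_0, \infty)$ I would invoke the standard bound $\Phi^*(X + \sqrt{t}S) \leq n/t$ coming from the free Stam inequality (cf.\ \cite{VoiV}), which implies $\Phi^*(X + \sqrt{t}S) - \tfrac{n}{1+t} \leq \tfrac{n}{t(1+t)}$; this is integrable on $[t_0, \infty)$. Combining the two regimes shows that the positive part of $\Phi^*(X+\sqrt{t}S) - \tfrac{n}{1+t}$ has finite total integral over $(0,\infty)$, so by the definition of $\chi^*$ we conclude $\chi^*(X) > -\infty$.

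The delicate point is that squaring the Fisher-Stein bound converts the mild decay $\Sigma_{f(t)}^*(X) \lesssim t^{-\alpha'\beta}$ into a near-singular bound $\Phi^*(X+\sqrt{t}S) \lesssim t^{-1-2\alpha'\beta}$ at the origin; the quantitative hypothesis $\alpha < 0$ is precisely what keeps this exponent above $-1$, while $\beta$ must simultaneously be kept below $1/2$ to control the $\sqrt{t}f(t)$ term after squaring. If instead one knew only $\Sigma^*(X) = 0$ without a rate, this balance would collapse, so the strict negativity of $\alpha$ is essential to the argument.
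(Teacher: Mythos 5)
Your proof is correct and follows essentially the same route as the paper: choosing an exponent strictly between $\alpha$ and $0$, taking $f$ to be a power of $t$ with suitable exponent, and feeding this into the Fisher--Stein comparison inequality to deduce integrability of $\Phi^*(X+\sqrt{t}S)$ near $t=0$. Your parameter $\beta\in(0,1/2)$ plays the role of the paper's $\gamma/2$ with $\gamma\in(0,1)$; the only addition is that you spell out the tail estimate on $[t_0,\infty)$ via the free Stam inequality, which the paper leaves implicit.
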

\begin{proof}
Let $\alpha<\beta<0$. Then there exists $R_0>0$ such that for all $R\geq R_0$ we have
	\[
		\Sigma_R^*(X)\leq R^{\beta}.
	\]
Let $\gamma\in (0,1)$. Then substituting $R=\frac{1}{t^{\gamma/2}}$ we have
	\[
		\Sigma_{1/t^{\gamma/2}}^*(X) \leq t^{-\gamma\beta/2}\qquad \forall t<t_0:=\frac{1}{R_0^{2/\gamma}}.
	\]
Using Equation (\ref{eqn:Fisher_Stein_comparison}) we therefore have
	\[
		\Phi^*(X+\sqrt{t} S) \leq \frac1t\left( t^{-\gamma\beta/2} + t^{(1-\gamma)/2}\right)^2 =  \left( t^{(-\gamma\beta - 1)/2} + t^{-\gamma/2}\right)^2 \qquad \forall t< t_0.
	\]
Since $(-\gamma\beta - 1)/2> -1/2$ and $-\gamma/2 > -1/2$ we have that the above quantity is integrable on $[0,t_0]$.
\end{proof}


\section{Some Computations of Free Stein Dimension.}
\label{sec:computations}

We provide some examples in which the free Stein irregularity and dimension can be explicitly computed.
In particular, we show that in these examples the free Stein dimension agrees with the non-microstates free entropy dimensions.
The first result concerns Atiyah's $\ell^2$-Betti numbers for discrete groups (cf. \cite{Ati76, CG86}).
Also see \cite[Chapter 1]{Luc02} for the definition considered here, and \cite{MS05} for the connection to free entropy dimension.

\begin{prop}\label{prop:group_algebras}
Let $\Gamma$ be a discrete group and let $x_1,\ldots, x_n\in \C[\Gamma]_{s.a.}$ generate the group algebra. Then
	\[
		\sigma(x_1,\ldots, x_n) = \beta_1^{(2)}(\Gamma) - \beta_0^{(2)}(\Gamma) + 1,
	\]
where $\beta_0^{(2)}(\Gamma)$ and $\beta_1^{(2)}(\Gamma)$ are the $\ell^2$-Betti numbers of $\Gamma$.
\end{prop}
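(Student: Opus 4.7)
The plan is to leverage the derivation characterization of Lemma~\ref{lem:how_phi_X_works}. Setting $M = L\Gamma$, that lemma together with Theorem~\ref{thm:algebra_inv} give
\[
\sigma(x_1,\ldots,x_n) = \dim_{L\Gamma\bar\otimes L\Gamma^\circ} \overline{\phi_X(\mathsf{Der}_{1\otimes 1}(\C \subset \C[\Gamma]))},
\]
and this quantity depends only on $\C[\Gamma]$ as a $*$-algebra. The problem thus reduces to computing the Murray--von Neumann dimension of the closed left $L\Gamma\bar\otimes L\Gamma^\circ$-module of closable derivations from $\C[\Gamma]$ into the coarse bimodule $H := L^2(L\Gamma\bar\otimes L\Gamma^\circ)$.

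I would then relate this module to a Hochschild-type object via the inner derivation map $\mathrm{ad}\colon H \to \mathsf{Der}_{1\otimes 1}$, $\mathrm{ad}(\xi)(a) = a\xi - \xi a$, which always lands in $\mathsf{Der}_{1\otimes 1}$ since $1\otimes 1$ is in the domain of the adjoint of any inner derivation arising from an $L^2$ vector. After pushing through $\phi_X$ and taking closures, this produces a short exact sequence of closed left $L\Gamma\bar\otimes L\Gamma^\circ$-modules
\[
0 \to \overline{\phi_X(\mathrm{Inn})} \to \overline{\phi_X(\mathsf{Der}_{1\otimes 1})} \to \overline{Q} \to 0,
\]
to which L\"uck's additivity of Murray--von Neumann dimension can be applied. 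For the inner submodule: via $H \cong \mathrm{HS}(L^2\Gamma)$, the vectors giving zero inner derivation form the Hilbert--Schmidt centralizer $\mathrm{HS}(L^2\Gamma) \cap JL\Gamma J$, whose dimension is $\beta_0^{(2)}(\Gamma)$ by a Plancherel computation (it is zero for infinite $\Gamma$ and $1/|\Gamma|$ for finite $\Gamma$). Hence $\dim_{L\Gamma\bar\otimes L\Gamma^\circ} \overline{\phi_X(\mathrm{Inn})} = 1 - \beta_0^{(2)}(\Gamma)$. For the quotient $Q$: via the standard Hochschild-to-group-cohomology translation applied to the coarse bimodule (whose adjoint $\Gamma$-action is, up to a coordinate change, a direct sum of copies of the regular representation), one identifies $Q$ with the first reduced $\ell^2$-cohomology of $\Gamma$, and L\"uck's algebraic formula then yields $\dim_{L\Gamma\bar\otimes L\Gamma^\circ} \overline{Q} = \beta_1^{(2)}(\Gamma)$. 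Summing gives the claimed formula.

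The main obstacle will be this last identification of $Q$ with reduced first $\ell^2$-cohomology: one must verify that the condition $1 \otimes 1 \in \dom(d^*)$, passed to the quotient and closure, selects precisely the reduced cocycle class used in L\"uck's formulation of $\ell^2$-Betti numbers. As a sanity check, the inequality $\sigma(x_1,\ldots,x_n) \leq \beta_1^{(2)}(\Gamma) - \beta_0^{(2)}(\Gamma) + 1$ follows immediately from Corollary~\ref{cor:free_Stein_info_and_free_entropy_dim} together with the Mineyev--Shlyakhtenko formula $\delta^*(X) = \beta_1^{(2)}(\Gamma) - \beta_0^{(2)}(\Gamma) + 1$ for group-algebra generators; the matching lower bound requires producing enough closable derivations to realize the $\ell^2$-cohomology, which one expects to achieve by explicitly lifting $1$-cocycles of the regular representation and verifying the closability condition $1\otimes 1 \in \dom(d^*)$ on each lift.
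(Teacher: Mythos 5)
Your upper bound $\sigma(X)\le\beta_1^{(2)}(\Gamma)-\beta_0^{(2)}(\Gamma)+1$ via Corollary~\ref{cor:free_Stein_info_and_free_entropy_dim} and Mineyev--Shlyakhtenko matches the paper exactly, and your high-level plan for the lower bound (realize enough closable derivations to fill up $\beta_1^{(2)}-\beta_0^{(2)}+1$ of dimension) is also the right idea. But there is a genuine gap in the way you propose to execute it, and you correctly flag where it is: the identification of the quotient $Q$ with reduced first $\ell^2$-cohomology, \emph{and}, more importantly, the verification that lifted $1$-cocycles give derivations satisfying $1\otimes1\in\dom(d^*)$. The closability condition is not automatic for an arbitrary cocycle valued in the coarse bimodule, and it is precisely the content that cannot be waved away; your proposal leaves it as an expectation rather than a proof. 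Additionally, the short exact sequence of \emph{closures}
\[
0 \to \overline{\phi_X(\mathrm{Inn})} \to \overline{\phi_X(\mathsf{Der}_{1\otimes1})} \to \overline{Q} \to 0
\]
needs more care than you give it: taking closures does not preserve exactness, and one should instead pass to the orthogonal complement $\overline{\phi_X(\mathsf{Der}_{1\otimes1})}\ominus\overline{\phi_X(\mathrm{Inn})}$ (this is exactly the reduced-versus-unreduced distinction that your own mention of ``reduced'' cohomology hints at, but the formal bookkeeping is not done).

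The paper's proof avoids all of this by outsourcing the hard work to Shlyakhtenko's paper \cite{Shl06}. Concretely, it introduces the space $H_1 = \overline{H_1^\circ}$ where $H_1^\circ$ consists of tuples $(z_1,\ldots,z_n)$ of Hilbert--Schmidt operators arising as commutators $[Y,x_j]$ with some unbounded self-adjoint $Y$ having $1\in\dom(Y)$. The key technical input is \cite[Theorem 1]{Shl06}, which establishes exactly the closability condition you need: for each $Z\in H_1^\circ$, the derivation $\partial_Z(p)=\sum_j\partial_j(p)\#z_j$ has $1\otimes1\in\dom(\partial_Z^*)$. The paper then observes that this gives $JH_1\subset\overline{\dom(\partial^*)}$, and cites \cite[Corollary 4]{Shl06} for the dimension computation $\dim_{\mmop}(H_1)=\beta_1^{(2)}(\Gamma)-\beta_0^{(2)}(\Gamma)+1$ --- which is precisely the cohomological identification you were attempting to carry out from scratch. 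In effect, your proposal is a sketch of how to reprove \cite[Theorem 1 and Corollary 4]{Shl06}; a complete argument would either fill in those details (nontrivial) or cite them directly, as the paper does.
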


\begin{proof}
	It was shown in \cite[Theorem 4.1]{MS05} that
	\[
		\delta^*(x_1,\ldots, x_n) = \delta^*(x_1,\ldots, x_n) = \beta_1^{(2)}(\Gamma) - \beta_0^{(2)}(\Gamma) + 1.
	\]
	So, by Corollary \ref{cor:free_Stein_info_and_free_entropy_dim}, it suffices to show
	\[
		\sigma(x_1,\ldots, x_n) \geq \beta_1^{(2)}(\Gamma) - \beta_0^{(2)}(\Gamma) + 1.
	\]
We make use of the following space from \cite[Section 2]{Shl06}: $H_1=\overline{H_1^\circ}^{\HS}$ where
		\begin{align*}
			H_1^\circ:=\text{span}\{ (z_1,\ldots, z_n)\in \HS(L^2(M))^n \colon& \exists Y=Y^* \text{ unbounded, densely defined with}\\
			&1\in \dom(Y),\ [Y,x_j]=z_j\text{ for each } j=1,\ldots,n\}.
		\end{align*}
		We can identify $H_1$ with a closed subspace in $L^2(\mmop)^n$ using the identification
		\begin{align*}
			L^2(\mmop) &\cong \HS(L^2(M))\\
			a\otimes b^\circ &\mapsto aP_1 b,
		\end{align*}
		where $P_1$ is the rank one projection onto $1\in L^2(M)$.
		By \cite[Theorem 1]{Shl06}, for every $Z:=(z_1,\ldots, z_n)\in H_1^\circ$ we have $1\otimes 1\in \dom(\partial_Z^*)$ where $\partial_Z\colon \C\<T\>\to L^2(\mmop)$ is the derivation defined by
		\[
			\partial_Z(p) =\sum_{j=1}^n \ev_X\circ\partial_j(p)\# z_j.
		\]
		Observe that if $J=J_{\tau\otimes\tau^\circ}$ is the Tomita conjugation operator on $L^2(\mmop)$, then for $p\in \C\<X\>$ we have
		\[
			\< 1\otimes 1 ,\ev_X\partial_Z(p)\>_2
			= \sum_{j=1}^n \< 1\otimes 1, \ev_X\partial_j(p) \# z_j\>_2
			= \sum_{j=1}^n \< J z_j, \ev_X\partial_j(p)\>_2
			= \< JZ, \ev_X\partial(p)\>_2.
		\]
		Consequently, $1\otimes 1 \in \dom(\partial_Z^*)$ if and only if $JZ\in \dom(\partial^*)$. It follows that $JH_1\subset \overline{\dom(\partial^*)}$ and so
	\[
		\dim_{\mmop}(\overline{\dom(\partial^*)}) \geq  \dim_{\mmop}(H_1),
	\]
	where the latter dimension is as a right $\mmop$-module. In the proof of \cite[Corollary 4]{Shl06} it was shown that the latter dimension is $\beta_1^{(2)}(\Gamma) - \beta_0^{(2)}(\Gamma)+1$, and so Theorem \ref{thm:free_Stein_info_is_dim} completes the proof.
\end{proof}

Our final example concerns finite-dimensional von Neumann algebras, for which $\delta$, $\delta^\star$, $\delta_0$, and $\Delta$ are known to agree. We show here that $\sigma$ can be added to this list.

\begin{cor}\label{cor:free_Stein_dimension_of_finite-dimensional_algebras}
Consider a finite-dimensional algebra for the form
	\[
		(M,\tau)= \bigoplus_{i=1}^d \left(M_{k_i}(\C), \lambda_i \text{tr}_{k_i}\right),
	\]
where the $\lambda_i$ are positive and sum to one, and $\text{tr}_{k_i}$ is the normalized trace on $M_{k_i}(\C)$. Then for any tuple of generators $X=(x_1,\ldots,x_n)$, we have
	\[
		\sigma(X) =1-\sum_{i=1}^d \frac{\lambda_i^2}{k_i^2}.
	\]
In particular, $\sigma(X)=\delta^*(X)=\delta^*(X)=\delta_0(X) = \Delta(X) = 1-\beta_0(M,\tau)$.
\end{cor}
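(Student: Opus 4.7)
Since the free Stein dimension depends only on the $*$-algebra $\C\<X\> = M$ by Theorem~\ref{thm:algebra_inv}, the quantity $\sigma(X)$ is intrinsic to $M$. The plan is to compute $\sigma(X) = \dim_{\mmop}\overline{\dom(\partial_X^*)}$ directly, exploiting the finite-dimensionality of $M$ to identify this domain with the orthogonal complement of $\ev_X\partial(\ker \ev_X)$ inside $L^2(\mmop)^n = (M \otimes M^\circ)^n$. The key reduction is to relate the resulting quotient $\mmop$-module to Connes' bimodule of noncommutative $1$-forms $\Omega^1 M := \ker(\mu\colon M \otimes M^\circ \to M)$, where $\mu(a \otimes b^\circ) = ab$ is the multiplication map; this reduces everything to a standard short exact sequence.

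Consider the bimodule map $F\colon (M \otimes M^\circ)^n \to \Omega^1 M$ obtained by sending the free generator $e_i$ to $dx_i := 1 \otimes x_i - x_i \otimes 1 \in \Omega^1 M$; note that the outer $M$-bimodule action on $M \otimes M^\circ$ coincides with the left regular $\mmop$-action via $(a \otimes b^\circ)\#(u \otimes v^\circ) = au \otimes (vb)^\circ$, so this really is an $\mmop$-module map. Since $X$ generates $M$, the elements $dx_1, \ldots, dx_n$ generate $\Omega^1 M$ as a bimodule, so $F$ is surjective. The Leibniz rule gives $F(\ev_X\partial(p)) = d(\ev_X(p))$ for every $p \in \C\<T\>$, which shows $\ev_X\partial(\ker\ev_X) \subseteq \ker F$. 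The reverse inclusion follows from the universal property of $\Omega^1 M$: the assignment $x_i \mapsto \overline{e_i} \in (M\otimes M^\circ)^n/\ev_X\partial(\ker\ev_X)$ extends by Leibniz to a well-defined derivation of $M$ (well-definition being precisely the fact that the $\partial$-image of each element of $\ker \ev_X$ is killed in the quotient), which factors through $\Omega^1 M$ and inverts the map induced by $F$ on the quotient. Consequently,
\begin{align*}
	(M \otimes M^\circ)^n \big/ \ev_X \partial(\ker \ev_X) \cong \Omega^1 M
\end{align*}
as $\mmop$-modules.

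Applying additivity of Murray--von Neumann dimension to the short exact sequence $0 \to \Omega^1 M \to M \otimes M^\circ \xrightarrow{\mu} M \to 0$ of $\mmop$-modules gives $\dim_{\mmop}\Omega^1 M = 1 - \dim_{\mmop}L^2(M)$. Decomposing $L^2(M) = \bigoplus_i L^2(M_{k_i}(\C))$ and noting that each summand is the standard simple module of the diagonal block $M_{k_i}(\C) \otimes M_{k_i}(\C)^\circ \subset \mmop$, which carries trace $\lambda_i^2\,\mathrm{tr}_{k_i^2}$, yields $\dim_{\mmop}L^2(M) = \sum_{i=1}^d \lambda_i^2/k_i^2 = \beta_0(M,\tau)$. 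Combining with Theorem~\ref{thm:free_Stein_info_is_dim} and the identification above,
\begin{align*}
	\sigma(X) = \dim_{\mmop}\dom(\partial_X^*) = n - \dim_{\mmop}\overline{\ev_X\partial(\ker\ev_X)} = \dim_{\mmop}\Omega^1 M = 1 - \sum_{i=1}^d \frac{\lambda_i^2}{k_i^2}.
\end{align*}
The chain $\sigma(X) \leq \delta^*(X) \leq \delta^\star(X)$ from Corollary~\ref{cor:free_Stein_info_and_free_entropy_dim}, together with the known literature values $\delta_0(X) = \Delta(X) = \delta^\star(X) = 1 - \beta_0(M,\tau)$ for finite-dimensional tracial algebras, pin down the remaining equalities. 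The main obstacle is the identification $\ker F = \ev_X\partial(\ker\ev_X)$, which requires a careful invocation of the universal property of $\Omega^1 M$ and verification that the induced derivation into the quotient is well-defined.
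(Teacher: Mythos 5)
Your proof is correct, but it takes a genuinely different route from the paper's. The paper proves the corollary by a sandwich argument: it reuses the space $H_1$ of commutators from Proposition~\ref{prop:group_algebras} to show $\sigma(X) \geq \dim_{\mmop}(H_1)$, cites \cite[Corollary 5]{Shl06} for $\dim_{\mmop}(H_1) = \delta^\star(X) = 1 - \sum_i \lambda_i^2/k_i^2$, and then pinches with the chain $\sigma(X) \leq \delta^*(X) \leq \delta^\star(X)$ of Corollary~\ref{cor:free_Stein_info_and_free_entropy_dim}. You instead compute $\sigma(X)$ head-on: in finite dimensions $\dom(\partial_X^*)$ is precisely the orthocomplement of $\ev_X\partial(\ker\ev_X)$ in $(M\otimes M^\circ)^n$, the quotient module is identified with $\Omega^1 M = \ker\mu$ via the universal property of noncommutative differential forms, and then the exact sequence $0 \to \Omega^1 M \to M\otimes M^\circ \to M \to 0$ together with the standard block computation $\dim_{\mmop}L^2(M) = \sum_i\lambda_i^2/k_i^2$ finishes the job. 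Your argument is more self-contained and makes the structural source of the formula (the Euler characteristic of the multiplication map) transparent, at the cost of a somewhat delicate well-definedness check for the induced derivation into the quotient module; the paper's approach is shorter and recycles machinery already built for Proposition~\ref{prop:group_algebras}, but it is essentially a citation to \cite{Shl06} rather than a direct computation. Both then pick up the equalities involving $\delta_0$ and $\Delta$ from \cite{Shl06} and \cite{CS05}, which neither proof can avoid.

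One small point worth noting: you need $\ev_X\partial(\ker\ev_X)$ to be a left $\mmop$-submodule of $(M\otimes M^\circ)^n$ in order for the orthocomplement to be a submodule and for the dimension subtraction to make sense; this follows from Lemma~\ref{lem:action_on_domain_of_adjoint}, since it shows $\dom(\partial_X^*)$ is a submodule and the complement of a submodule is a submodule, but it would be worth spelling out. Also, the opening appeal to Theorem~\ref{thm:algebra_inv} is not actually used in your computation (you work with an arbitrary generating tuple throughout) and could be dropped.
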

\begin{proof}
In the proof of \cite[Corollary 5]{Shl06} it is shown that
	\[
		1-\sum_{i=1}^d \frac{\lambda_i^2}{k_i^2} = \delta^*(X)= \dim_{\mmop}(H_1),
	\]	
where $H_1$ is as in the proof of Proposition~\ref{prop:group_algebras}. Hence equality with $\sigma(X)$ follows from the proof of Proposition~\ref{prop:group_algebras}. The remaining equalities are then simply \cite[Corollary 5]{Shl06} (see also \cite{CS05}, namely Proposition 2.9 and Equation 3.10).
\end{proof}

\titleformat{\section}[hang]
{\normalfont\Large\bfseries}
{Appendix \thesection.}{0.5em}{}
\appendix
\section{}
\label{appendix:maikernel}
In this appendix we will demonstrate that for $X$ self-adjoint and algebraically free, the Mai kernel $A_\Xi$ (given in Proposition~\ref{prop:Mai}) satisfies
	\[
		\|A_\Xi - \1\|_{\HS} = \Sigma^*(X\mid \Xi)
	\]
if and only if $\Xi=0$. We emphasize that any free Stein kernel attaining the free Stein discrepancy of $X$ is necessarily contained in the closure of the range of $\J$.

Let $d : L^2(M) \to L^2(\mmop)$ be the derivation given by commutation against $1\otimes1$: $\zeta \mapsto \zeta\otimes 1 - 1\otimes \zeta$.
Given $Z = (\zeta_1, \ldots, \zeta_n) \in L^2(M)^n$, let $D : L^2(M)^n \to L^2(\mmop)^n$ be given by applying $d$ to each coordinate: $D(Z) = (d(\zeta_1), \ldots, d(\zeta_n))$.
\begin{lem}
	\label{lem:dotdx}
	Suppose that $A\in \overline{\partial\C\ang{X}} \subseteq L^2(\mmop)^n$.
	If $(p_k)_{k\in\N}$ is a sequence in $\C\ang{X}$ so that
	\[
		A = \lim_{k\to\infty}\partial p_k,
	\]
	then 
	\[
		A\cdot D(X) = \lim_{k\to\infty}(p_k\otimes 1 - 1\otimes p_k).
	\]
\end{lem}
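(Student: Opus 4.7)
The plan is to reduce the statement to a pointwise algebraic identity on $\C\langle X\rangle$ and then pass to the $L^2$-limit by continuity. First I would establish the identity
\[
	\partial p \cdot D(X) = \sum_{i=1}^n \partial_i p \# (x_i\otimes 1 - 1\otimes x_i) = p\otimes 1 - 1\otimes p
\]
for every $p\in \C\ang{X}$. By linearity it suffices to check this on a monomial $p = x_{i_1}\cdots x_{i_d}$. Applying the Leibniz form of $\partial_j$ and summing, the left-hand side becomes
\[
	\sum_{k=1}^d \paren{x_{i_1}\cdots x_{i_k}\otimes x_{i_{k+1}}\cdots x_{i_d} - x_{i_1}\cdots x_{i_{k-1}}\otimes x_{i_k}\cdots x_{i_d}},
\]
a telescoping sum whose only surviving boundary terms are precisely $p\otimes 1 - 1\otimes p$.

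Next I would observe that since each $x_i\otimes 1 - 1\otimes x_i$ is a bounded element of $\mmop$, right multiplication by it is a bounded operator on $L^2(\mmop)$. Consequently the pairing
\[
	B = (b_1,\ldots,b_n) \longmapsto B\cdot D(X) = \sum_{i=1}^n b_i \# (x_i\otimes 1 - 1\otimes x_i)
\]
defines a bounded linear map $L^2(\mmop)^n \to L^2(\mmop)$.

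Combining these two observations, applying the bounded pairing to the convergent sequence $\partial p_k \to A$ and invoking the identity on each $p_k$ yields
\[
	A\cdot D(X) = \lim_{k\to\infty} \partial p_k \cdot D(X) = \lim_{k\to\infty}(p_k\otimes 1 - 1\otimes p_k),
\]
which is the claim. There is no real obstacle: the only substantive content is the telescoping identity on monomials, and the passage to the limit is automatic once boundedness of the pairing is noted.
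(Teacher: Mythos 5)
Your proof is correct and follows the same approach as the paper: establish the pointwise identity $\partial p \cdot D(X) = p\otimes 1 - 1\otimes p$ (which the paper treats as a "straightforward computation" but you spell out via the telescoping sum), note that right multiplication by $D(X) \in (\mmop)^n$ is bounded, and pass to the limit.
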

\begin{proof}
	Observe that $D(X)\in (\mmop)^n$, so that it has a bounded right action on $L^2(\mmop)^n$. Thus the equation follows from a straightforward computation:
	\[
		A\cdot D(X)
		= \lim_{k\to\infty} (\partial p_k) \cdot D(X)\\
		= \lim_{k\to\infty}\sum_{j=1}^n \partial_j p_k \# (x_j\otimes 1 - 1\otimes x_j) \\
		= \lim_{k\to\infty} p_k\otimes1 - 1\otimes p_k.
		\qedhere
	\]
\end{proof}

The lemma applies, in particular, to the rows of any free Stein kernel that attains the free Stein discrepancy of $X$.

\begin{prop}
	Suppose $\Xi = (\xi_1, \ldots, \xi_n) \in L^2(M)^n \ominus \C^n$, and let $A_\Xi$ be as in Proposition~\ref{prop:Mai}:
	\[
		A_\Xi:=\left[\frac12 (\xi_i\otimes 1 - 1\otimes \xi_i)\#(x_j\otimes 1 - 1\otimes x_j^\circ) \right]_{i,j=1}^n\in M_n(L^2(\mmop)).
	\]
	If $\|A_\Xi - \1\|_{\HS}= \Sigma^*(X \mid \Xi)$, then $\Xi = 0$.
\end{prop}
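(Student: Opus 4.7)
My plan is to first reformulate the attainment condition as a membership statement, then apply Lemma~\ref{lem:dotdx} to derive an explicit algebraic identity on $\Xi$, and finally use algebraic freeness to conclude $\Xi = 0$.

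The set of free Stein kernels of $X$ relative to $\Xi$ is the affine subspace $A_\Xi + \ker\J^*$ of $M_n(L^2(\mmop))$. Since $\1 = \ev_X\circ\J(X) \in \mathrm{range}(\J)$, both $\Pi(A_\Xi)$ and $\1$ lie in $\overline{\mathrm{range}(\J)}$ while $A_\Xi - \Pi(A_\Xi)$ is orthogonal to it, and so the Pythagorean identity gives $\norm{A_\Xi - \1}_{\HS} = \Sigma^*(X\mid\Xi)$ exactly when $A_\Xi \in \overline{\mathrm{range}(\J)}$, i.e., when each row $A_{\Xi,i}$ belongs to $\overline{\partial \C\ang{X}}$. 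Applying Lemma~\ref{lem:dotdx} to each row produces sequences $(p_k^{(i)}) \subseteq \C\ang{X}$ with $\partial p_k^{(i)} \to A_{\Xi,i}$ and $d(p_k^{(i)}) \to A_{\Xi,i}\cdot D(X)$ in $L^2(\mmop)$. A direct computation gives $\norm{d(\zeta)}_2^2 = 2(\norm{\zeta}_2^2 - \abs{\tau(\zeta)}^2)$, so $d$ has closed range equal to $d(L^2(M))$, and hence $A_{\Xi,i}\cdot D(X) \in d(L^2(M))$. Meanwhile, expanding the Mai kernel yields
\[ A_{\Xi,i}\cdot D(X) = \tfrac{1}{2}\sum_{j=1}^n d(\xi_i) \cdot d(x_j)^2 = \tfrac{1}{2}\, d(\xi_i) \cdot C, \qquad C := \sum_{j=1}^n d(x_j)^2 \in \mmop, \]
so $d(\xi_i)\cdot C \in d(L^2(M))$ for every $i$.

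The main obstacle is to deduce $\xi_i = 0$ from this relation. Since $d(L^2(M)) \subseteq L^2(M)\otimes 1 + 1\otimes L^2(M)$, the projection of $d(\xi_i)\cdot C$ onto $(L^2(M)\ominus\C)\otimes(L^2(M)\ominus\C)$ must vanish. Expanding this projection using $\tau(\xi_i) = 0$ yields the explicit identity
\[ \xi_i \otimes (c - \tau(c)) - (c - \tau(c)) \otimes \xi_i - 2\sum_j (\xi_i x_j - \tau(\xi_i x_j)) \otimes \tilde x_j + 2\sum_j \tilde x_j \otimes (x_j \xi_i - \tau(x_j \xi_i)) = 0, \]
where $c = \sum_j x_j^2$ and $\tilde x_j = x_j - \tau(x_j)$. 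I expect to conclude by invoking the algebraic freeness of $X$: in the one-variable case, expanding $\xi$ in the orthonormal polynomial basis for the distribution of $x$ and matching coefficients of $p_k \otimes p_l$ forces all coefficients to vanish, using that the Jacobi recurrence coefficients are nonzero (which follows from the infinite support guaranteed by algebraic freeness). The multivariable case should be analogous, relying on the linear independence of reduced monomial tensors in $\C\ang{X}\otimes\C\ang{X}^\circ$.
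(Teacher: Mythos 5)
Your argument follows the paper's route essentially exactly up through the key algebraic identity: you reformulate the attainment condition as $A_\Xi \in \overline{\operatorname{range}(\J)}$ (so each row lies in $\overline{\partial\C\ang{X}}$), apply Lemma~\ref{lem:dotdx}, recognize that $A_{\Xi,i}\cdot D(X) = \tfrac12 d(\xi_i)\#\sum_j d(x_j)^2$ must lie in the range of $d$, and project onto $(L^2(M)\ominus\C)\otimes(L^2(M)\ominus\C)$ to obtain (the centered form of) the paper's Equation~(\ref{eqn:somestuffiszero}). Your observation that $d$ has closed range, via $\|d(\zeta)\|_2^2 = 2\bigl(\|\zeta\|_2^2 - |\tau(\zeta)|^2\bigr)$, is a clean alternative to the paper's direct application of $1\otimes\tau^\circ$ and $\tau\otimes1$ to the limit $\lim_k(p_k\otimes1-1\otimes p_k)$; both yield the same conclusion, namely that the cross term of $d(\xi_i)\#C$ vanishes. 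So through the derivation of the identity, this is correct and follows the same approach.

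The gap is the last step, which you leave as a plan rather than a proof. Extracting $\xi_i=0$ from the identity is the real work. Your one-variable sketch (expand $\xi$ in orthonormal polynomials, use the Jacobi recurrence) is plausible but unexecuted, and more importantly your claim that "the multivariable case should be analogous, relying on the linear independence of reduced monomial tensors" glosses over a genuine difficulty: in several noncommuting variables the monomials $x_{i_1}\cdots x_{i_d}$ are linearly independent in $L^2(M)$ (by algebraic freeness) but \emph{not} orthogonal, and $\xi_i$ is merely an $L^2$-vector, not a polynomial, so there is no canonical "coefficient of $p_k\otimes p_l$" to match. The paper resolves this by choosing dual vectors: a polynomial $p$ with $\ang{x_1^2,p}=1$ and $p$ orthogonal to all other monomials of degree $\le 2$, and similarly a $q$ dual to $x_1$ among monomials of degree $\le 3$. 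Applying $1\otimes\ang{\,\cdot\,,p}_2$ to the identity shows $\xi_i$ is a polynomial of degree $\le 2$; applying $1\otimes\ang{\,\cdot\,,q}_2$ then pins down its form and forces $\xi_i\in\C$, hence $\xi_i=0$. This biorthogonal device is the missing ingredient — without it, or something equivalent, the identity alone does not give you the conclusion.
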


\begin{proof}
	First note that it suffices to assume that $\tau(x_1)=\cdots=\tau(x_n)=0$. Indeed, let
	\[
		\mathring{X}=(\mathring{x_1},\ldots, \mathring{x_n}) = (x_1 - \tau(x_1), \ldots, x_n - \tau(x_n)).
	\]
Then clearly $\C\langle\mathring{X}\rangle = \C\<X\>$ and consequently $\Xi\in L^2(W^*(\mathring{X}))^n$. Moreover, $A_\Xi$ is unchanged when replacing $X$ with $\mathring{X}$. Now for any
	\[
		A \in M_n(L^2( W^*(X)\bar\otimes W^*(X)^\circ)) = M_n(L^2( W^*(\mathring{X})\bar\otimes W^*(\mathring{X})^\circ)),
	\]
if $A$ is a free Stein kernel for $X$ relative to $\Xi$, then by the chain rule it is also a free Stein kernel for $\mathring{X}$ relative to $\Xi$, and vice versa. Hence $\Sigma^*(X\mid \Xi) = \Sigma^*(\mathring{X}\mid \Xi)$ and so, replacing $X$ with $\mathring{X}$ if necessary, we may assume $\tau(x_1)=\cdots =\tau(x_n)=0$.

	Note that the $i$-th row of $A_\Xi$ is given by $\frac12 d(\xi_i) \# D(X) =: r_i$, and from the assumption that $\Sigma^*(X \mid \Xi) = \norm{A_\Xi - \1}_{\HS}$ we have that $r_i \in \overline{\partial \C\<X\>}\subset L^2(\mmop)^n$.
	Now, pick $(p_k)_{k\in\N}$ in $\C\ang{X}$ so that $\partial p_k \to r_i$; since $\C1 \in \ker\partial$, we may assume $\tau(p_k) = 0$, replacing $p_k$ by $p_k - \tau(p_k)$ if needed.
	Then from Lemma~\ref{lem:dotdx}, we have $\displaystyle r_i\cdot D(X) = \lim_{k\to\infty} p_k\otimes1-1\otimes p_k$.
	Hence
		\[
			(1\otimes\tau^\circ)(r_i\cdot D(X)) = \lim_{k\to\infty} p_k \qquad \text{and} \qquad (\tau\otimes1)(r_i\cdot D(X)) = - \lim_{k\to\infty}p_k.
		\]
	We compute
	\begin{align*}
		\sum_{j=1}^n &\xi_ix_j^2\otimes1 - 2\xi_i x_j\otimes x_j + \xi_i\otimes x_j^2 - x_j^2 \otimes \xi_i + 2 x_j\otimes x_j\xi_i - 1\otimes x_j^2\xi_i \\
		&= 2r_i\cdot D(X) \\
		&= 2\lim_{k\to\infty}p_k\otimes1 - 1\otimes p_k \\
		&= [(1\otimes\tau^\circ)(2r_i\cdot D(X))] \otimes 1 + 1\otimes[ (\tau\otimes1)(2r_i\cdot D(X)) ] \\
		&= \paren{ \sum_{j=1}^n \xi_i (x_j^2 +\tau(x_j^2)) +2x_j \tau(x_j \xi_i) - \tau(x_j^2\xi_i)}\otimes 1 + 1\otimes \paren{ \sum_{j=1}^n \tau(\xi_i x_j^2) - 2 \tau(\xi_i x_j) x_j - (\tau(x_j^2)+x_j^2) \xi_i}\\
		&= \sum_{j=1}^n \xi_i x_j^2 \otimes 1 + \tau(x_j^2) d(\xi_i) + 2\tau(x_j\xi_i)d(x_j) - 1\otimes x_j^2 \xi_i.
	\end{align*}
	Subtracting common terms on each side, we find
	\begin{align}
		\label{eqn:somestuffiszero}
		\sum_{j=1}^n \xi_i\otimes [x_j^2 - \tau(x_j)^2] + 2 x_j\otimes [x_j \xi_i - \tau(x_j \xi_i)] - 2 [\xi_i x_j - \tau(\xi_i x_j)]\otimes x_j - [x_j^2 - \tau(x_j^2)]\otimes \xi_i=0.
	\end{align}
	As $X$ is algebraically free, we may find polynomials $p$ and $q$ such that $\ang{x_1^2, p} = 1$ while $p$ is orthogonal to all other monomials of degree at most two, and $\ang{x_1, q} = 1$ while $q$ is orthogonal to all other monomials of degree at most three.
	Applying the map $1\otimes\ang{\ \cdot\,, p}_2$ to the above equality yields
	\[
		\xi_i + \sum_{j=1}^n 2 x_j \ang{ x_j \xi_i, p} - [x_j^2 - \tau(x_j^2)]\ang{\xi_i, p}=0,
	\]
	whence $\xi_i$ is a polynomial in $X$ of degree at most two.
	Now, applying $1\otimes \ang{\ \cdot\,, q}_2$ to Equation~\ref{eqn:somestuffiszero} and using the fact that $x_j\xi_i$ is a polynomial of degree at most three, we find
	\[
		2x_1\ang{x_1\xi_i, q} - 2(\xi_ix_1 - \tau(\xi_ix_1)) - \ang{\xi_i, q}\sum_{j=1}^n (x_j^2-\tau(x_j^2)) = 0.
	\]
	From this it follows that $\xi_ix_1$ is a linear combination of $1, x_1, x_1^2, x_2^2, \ldots, x_n^2$.
	But then $\xi_i$ must be a linear combination of $1$ and $x_1$; say $\xi_i = s + tx_1$.
	Looking at the coefficient of $x_1^2$ in the above equation, we find that $-2t -\ang{\xi_i, q} = 0$; since $\ang{\xi_i, q} = t$, we have $t = 0$, whence $\xi_i \in \C$.
	As $\xi_i \in L^2(M)\ominus \C$, $\xi_i = 0$.
\end{proof}

\section{}
\label{app:examples}
	In this appendix we consider a few informative examples. 
	The first two show that for certain tuples generating interpolated free group factors $L(\mathbb{F}_t)$, the parameter $t$ can be recovered through a formula involving the free Stein dimension of the tuples.

\begin{ex}
Let $s_0,s_1,\ldots, s_n$ be a free semicircular family. Let $B=W^*(s_0)$ and for each $j=1,\ldots, n$, let $e_j,f_j$ be projections in $B$ that are either equal or orthogonal. Define $k_j=1$ if $e_j=f_j$ and $k_j=2$ otherwise. Then by \cite{Rad94} we have
	\[
		M:=W^*(s_0,e_1s_1f_1,\ldots, e_ns_nf_n)\cong L(\mathbb{F}_t)
	\]
where
	\[
		t= 1+ \sum_{s=1}^n k_s \tau(e_s)\tau(f_s).
	\]

Let $K=\sum_{j=1}^n k_j$ and let $X$ be the $K$-tuple consisting of the $e_js_jf_j$ (and $f_js_je_j$ if $e_j\neq f_j$). We claim
	\begin{align}\label{eqn:FSD_of_Radulescu_LFt}
		\sigma(X\colon B) +\sigma(s_0)=t.
	\end{align}
Indeed, define $P_t$ to be the $K\times K$ diagonal matrix whose $(i,i)$ entry is $e_j\otimes f_j$ if $x_i=e_js_jf_j$. Note that $P_t$ is a projection, and by freeness one easily sees that $P_t\in\dom(\J_{X\colon B}^*)$ with $\J_{X\colon B}^*(P_t) = X$. Consequently, $\Sigma^*(X\colon B) \leq \|P_t -\1\|_{\HS}$. On the other hand, $\ev_X T = X = P_t\# X = \ev_X(P_t\# T)$. Thus for any $A\in \dom(\J^*_{X\colon B})$ we have
	\[
		\<A,\1\>_{\HS} = \<\J_{X\colon B}^*(A), \ev_X T\>_2 = \< \J_{X\colon B}^*, \ev_X(P_t\# T)\>_2 = \<A, P_t\>_{\HS}.
	\]
Consequently
	\begin{align*}
		\|A-\1\|_{\HS}^2 - \|P_t - \1\|_{\HS}^2 &= \|A\|_{\HS}^2 - 2\Re \<A,\1\>_{\HS}  - \|P_t\|_{\HS}^2 + 2\Re\<P_t,\1\>_{\HS}\\
			&=\|A\|_{\HS}^2  - 2\Re\<A,P_t\>_{\HS} + \|P_t\|_{\HS}^2  = \|A - P_t\|_{\HS}^2 \geq 0.
	\end{align*}
Thus
	\[
		\Sigma^*(X\colon B)^2 = \|P_t - \1\|_{\HS}^2 = \sum_{j=1}^n k_2\tau\otimes\tau^\circ(1\otimes 1 - e_j\otimes f_j) = K - \sum_{j=1}^n k_2 \tau(e_j)\tau(f_j) = K+1-t.
	\]
Equation~(\ref{eqn:FSD_of_Radulescu_LFt}) then follows since $\sigma(s_0)=1$. $\hfill\blacksquare$
\end{ex}

\begin{ex}
	Fix a finite, connected graph $\Gamma= (V,E)$  with vertex weighting $\mu\colon V\to [0,1]$ satisfying $\sum_{v\in V} \mu(v)=1$, and let $\vec{\Gamma}=(V,\vec{E})$ be the associated directed graph (cf. \cite{HN18}). Recall that the free graph von Neumann algebra $(\mc{M}(\Gamma,\mu),\tau)$ is generated by operators $\{x_\epsilon\colon \epsilon\in \vec{E}\}$ and an orthogonal family of projections $\{p_v\colon v\in V\}$, which satisfy the following graph relations:
	\begin{itemize}
		\item $\tau(p_v) = \mu(v)$ for all $v\in V$;

		\item $x_{\epsilon}^* = x_{\epsilon^\text{op}}$ for all $\epsilon\in \vec{E}$;

		\item $p_v x_{\epsilon} p_w = \delta_{v=s(\epsilon)} \delta_{w=t(\epsilon)} x_{\epsilon}$ for all $v,w\in V$ and $\epsilon \in \vec{E}$.
	\end{itemize}
Moreover, there is a trace-preserving isomorphism between $M$ and the interpolated free group factor $L(\mathbb{F}_t)$ with parameter
	\[
		t:=1-\sum_{v\in V} \mu(v)^2 + \sum_{v\in V} \mu(v) \sum_{w\sim v} n_{v,w}\mu(w),
	\]	
where $n_{v,w}$ is the number of edges connecting $v$ to $w$.

	Let $X:=(x_{\epsilon}\colon \epsilon\in \vec{E})$, $Y:=(p_v\colon v\in V)$, and $B:=\C\<Y\>$. We claim
		\begin{align}\label{eqn:FSD_of_free_graph_algebra}
			\sigma(X\colon B) + \sigma(Y) = t.
		\end{align}
By \cite[Lemma 3.9]{Har17} (see also \cite[Lemma 2.1]{HN18}), one has $P_V\in \dom(\J_{X\colon B}^*)$ where $P_V$ is the projection given by the $|\vec{E}|\times |\vec{E}|$ diagonal matrix with $(\epsilon,\epsilon)$-entry given by $p_{s(\epsilon)}\otimes p_{t(\epsilon)}$. Then one has $\Sigma^*(X\colon B) \leq \|P_V - \1\|_\HS$. On the other hand, observe that $\ev_X T = X=P_V\# X= \ev_X (P_V\# T)$. So the other inequality follows by precisely the same argument as in the previous example. Thus
	\begin{align*}
		\Sigma^*(X\colon B)^2 = \|P_V - \1\|_{\HS}^2 &= \sum_{\epsilon \in \vec{E}}  \tau\otimes \tau^\circ( 1\otimes 1 - p_{s(\epsilon)}\otimes p_{t(\epsilon)})\\
		&= \sum_{\epsilon \in \vec{E}}  1 - \mu(s(\epsilon)) \mu(t(\epsilon))\\
		&= |\vec{E}| - \sum_{v\in V} \mu(v) \sum_{w\sim v} n_{v,w} \mu(w), 
	\end{align*}	
Finally, appealing to Corollary~\ref{cor:free_Stein_dimension_of_finite-dimensional_algebras} yields Equation~(\ref{eqn:FSD_of_free_graph_algebra}). $\hfill\blacksquare$
\end{ex}

	The next example was concocted to demonstrate explicitly that $\alpha=0$ in Equation~(\ref{eqn:alpha}) does not imply $\Sigma^*(x)>0$.
	It also demonstrates the fact that full free entropy dimension is strictly weaker than finite free entropy, by explicitly constructing a probability measure with no atoms and infinite logarithmic energy; while this result is already known, we are not aware of an explicit example in the literature.

	\begin{ex}\label{ex:infinite_entropy_full_dimension}
		Let $I_n \subset [0, 1]$ be a disjoint sequence of intervals such that the Lebesgue measure $\lambda(I_n) < e^{-12^n}$.
		Define a function $f$ as follows:
		\begin{align*}
			f : \R &\to \R_{\geq0} \\
			t &\mapsto \sum_{n=1}^\infty \frac{1}{2^n\lambda(I_n)}\chi_{I_n}(t).
		\end{align*}
		By construction $f$ is non-negative, integrable, and has mass 1, so it is a probability density; let $\mu$ be the measure with density given by $f$.
		We claim that the (negative) logarithmic energy of $\mu$ is infinite.
		Indeed,
		\begin{align*}
			\iint_{\R^2}\log\abs{x-y}\,d\mu(x)\,d\mu(y)
			&\leq \sum_{n=1}^\infty \iint_{I_n^2} \log\abs{x-y}\,d\mu(x)\,d\mu(y) \\
			&\leq \sum_{n=1}^\infty \log(e^{-12^n})4^{-n} \\
			&= -\infty.
		\end{align*}
		Now, since $\supp(\mu)$ is bounded and has a diffuse component, there exists a bounded, self-adjoint, algebraically free operator $x$ with spectral measure $\mu$. It follows from Proposition~\ref{prop:negative_alpha} that $\alpha=0$, and by Theorem~\ref{thm:one_variable_computation} we have $\Sigma^*(x)=0$. $\hfill\blacksquare$
	\end{ex}

As a decreasing convex function, if $R \mapsto \Sigma^*_R(X)$ ever plateaus it remains constant forever.
This happens, for example, when conjugate variables actually exist: $\Sigma^*_R(X) = 0$ for $R \geq \sqrt{\Phi^*(X)}$.
One may wonder, then, if this behaviour can occur when $\Sigma^*(X)>0$; we provide a family of examples to show that it can.

\begin{ex}
	Let $\mu = \frac12\sigma + \frac12\delta_a$ where $d\sigma=\chi_{[-2,2]}(t)\frac{1}{2\pi}\sqrt{4-t^2}\ dt$ is the semicircle law.
	Then we will show that if $|a| > 2$ and $x$ has spectral measure $\mu$, then there is $R > 0$ so that $\Sigma^*_R(x) = \Sigma^*(x) = \frac12$.

	As in the proof of Theorem~\ref{thm:one_variable_computation}, define the functions
	$$g_\epsilon(t) := 2\int_\R \frac{t-s}{(t-s)^2+\epsilon^2}\,d\mu(s).$$
	As before, we have a free Stein kernel for $x$ relative to $g_\epsilon$ given by
	$$A_\epsilon(t, s) := \frac{(t-s)^2}{(t-s)^2+\epsilon^2}.$$
	Notice that as $\epsilon \to 0$, $A_\epsilon(t, s) \to \chi_{t\neq s} =: A(s, t)$ which has $\|A - \1\|_{L^2(\mu\times \mu)}^2 = \mu(\{a\})^2$; so it suffices to show that $A$ is a free Stein kernel.

	Here we will use the fact that $a \notin \supp(\sigma)$ to conclude that $g_\epsilon$ converges in $L^2(\mu)$ as $\epsilon \to 0$.
	This can be checked by, for example, recognizing that $g_\epsilon$ converges in both $L^2(\delta_a)$ and $L^2(\sigma)$: in the former space, 
	$$g_\epsilon \to \int \frac{1}{a-s}\,d\sigma(s),$$
	which converges since $a$ is outside the support of $\sigma$;
	in the latter,
	$$g_\epsilon(t) \to Kt + \frac{1}{t-a},$$
	where we have used the fact that the Hilbert transform of the semicircle distribution is $t$ while $a$ is, once again, outside of the support of $\sigma$.
	Let $g = \displaystyle\lim_{\epsilon\to0}g_\epsilon$ with the limit in $L^2(\mu)$.

	We claim that $A$, above, is a free Stein kernel for $x$ relative to $g$, whereupon $\Sigma_R^*(x) = \mu(\{a\}) = \frac12$ for $R \geq \|g\|_{L^2(\mu)}$. (However, note that $\|g\|_{L^2(\mu)} $ diverges as $|a|\to 2$.)
	To see that, notice that $\partial^*$ is closed since $\partial$ is densely defined. Since $A_\epsilon \in \dom(\partial^*)$ with $\partial^*(A_\epsilon)=g_\epsilon$ (by virtue of being a free Stein kernel) we therefore have $A\in \dom(\partial^*)$ with $\partial^*(A)=g$. That is, $A$ is a free Stein kernel for $x$ relative to $g$.
	$\hfill\blacksquare$
\end{ex}

	One may be tempted to guess that if $A$ is a Stein kernel for $X$ and $Y$ is arbitrary that there is some Stein kernel for $(X, Y)$ of the form $$\begin{pmatrix}A&*\\ *&*\end{pmatrix}.$$
	This is true when $Y \in \C\ang{X}^m$ or when $Y$ is free from $X$.
	However, this does not happen in general.

	\begin{ex}
		\label{ex:no_extension}
		Let $\mu$ be any measure which is diffuse and so that $\frac{1}{s+t} \notin L^2([0,1]^2, \mu\times\mu)$.
		Note that $s^2$ is diffuse as $s$ is and so $1 \in \overline{\dom\paren{\partial_{s^2}^*}}$; we will show that there is no element of the form $(1,\alpha)\in \overline{\dom(\partial_{(s^2,s)}^*)}$.

		Notice that because $(0, s)$ and $(s^2, s)$ generate the same algebra, the map $\rho_{(0, s), (s^2, s)}$ from Proposition~\ref{prop:change_of_variables} provides a bijection between the closures of the free Stein kernels.
		In particular, if $(0, b) \in \overline{\dom(\partial_{(0,s)}^*)}$ then $((s+t)b, b) \in \overline{\dom(\partial_{(s^2,s)}^*)}$, and \emph{every element is of this form.}
		Hence if $(1, \alpha)$ were to be in the domain of $\partial_{(s^2, s)}^*$, we would have $\alpha = \frac{1}{s+t}$, which is absurd, as we would then have $\alpha \notin L^2([0,1]^2, \mu\times\mu)$.
	$\hfill\blacksquare$
	\end{ex}


\bibliographystyle{amsalpha}
\bibliography{references}

\providecommand{\bysame}{\leavevmode\hbox to3em{\hrulefill}\thinspace}
\providecommand{\MR}{\relax\ifhmode\unskip\space\fi MR }
\providecommand{\MRhref}[2]{%
  \href{http://www.ams.org/mathscinet-getitem?mr=#1}{#2}
}
\providecommand{\href}[2]{#2}
\begin{thebibliography}{MSW17}

\bibitem[Ati76]{Ati76}
Michael~F. Atiyah, \emph{Elliptic operators, discrete groups and von {N}eumann
  algebras}, 43--72. Ast\'{e}risque, No. 32--33. \MR{0420729}

\bibitem[BM18]{BM18}
Marwa {Banna} and Tobias {Mai}, \emph{{H\"older Continuity of Cumulative
  Distribution Functions for Noncommutative Polynomials under Finite Free
  Fisher Information}}, ArXiv e-prints (2018).

\bibitem[CFM18]{CFM18}
Guillaume {C{\'e}bron}, Max {Fathi}, and Tobias {Mai}, \emph{{A note on
  existence of free Stein kernels}}, arXiv e-prints (2018), arXiv:1811.02926.

\bibitem[CG86]{CG86}
Jeff Cheeger and Mikhael Gromov, \emph{{$L_2$}-cohomology and group
  cohomology}, Topology \textbf{25} (1986), no.~2, 189--215. \MR{837621}

\bibitem[CS05]{CS05}
Alain Connes and Dimitri Shlyakhtenko, \emph{{$L^2$}-homology for von {N}eumann
  algebras}, J. Reine Angew. Math. \textbf{586} (2005), 125--168. \MR{2180603
  (2007b:46104)}

\bibitem[CS16]{CS16}
Ian Charlesworth and Dimitri Shlyakhtenko, \emph{Free entropy dimension and
  regularity of non-commutative polynomials}, J. Funct. Anal. \textbf{271}
  (2016), no.~8, 2274--2292. \MR{3539353}

\bibitem[Dab10]{Dab10}
Yoann Dabrowski, \emph{A note about proving non-{$\Gamma$} under a finite
  non-microstates free {F}isher information assumption}, J. Funct. Anal.
  \textbf{258} (2010), no.~11, 3662--3674. \MR{2606868 (2011d:46135)}

\bibitem[DL92]{DL92}
E.~Brian Davies and J.~Martin Lindsay, \emph{Noncommutative symmetric {M}arkov
  semigroups}, Math. Z. \textbf{210} (1992), no.~3, 379--411. \MR{1171180}

\bibitem[FN17]{FN17}
Max Fathi and Brent Nelson, \emph{Free {S}tein kernels and an improvement of
  the free logarithmic {S}obolev inequality}, Adv. Math. \textbf{317} (2017),
  193--223. \MR{3682667}

\bibitem[Ge98]{Ge98}
Liming Ge, \emph{Applications of free entropy to finite von {N}eumann algebras.
  {II}}, Ann. of Math. (2) \textbf{147} (1998), no.~1, 143--157. \MR{1609522}

\bibitem[Har17]{Har17}
Michael Hartglass, \emph{Free product {${\rm C}^*$}-algebras associated with
  graphs, free differentials, and laws of loops}, Canad. J. Math. \textbf{69}
  (2017), no.~3, 548--578. \MR{3679687}

\bibitem[Hay18]{Hay18}
Ben Hayes, \emph{1-{B}ounded entropy and regularity problems in von {N}eumann
  algebras}, Int. Math. Res. Not. IMRN (2018), no.~1, 57--137. \MR{3801429}

\bibitem[HN18]{HN18}
Michael Hartglass and Brent Nelson, \emph{Free transport for interpolated free
  group factors}, J. Funct. Anal. \textbf{274} (2018), no.~1, 222--251.
  \MR{3718052}

\bibitem[HS07]{HS07}
Don Hadwin and Junhao Shen, \emph{Free orbit dimension of finite von {N}eumann
  algebras}, J. Funct. Anal. \textbf{249} (2007), no.~1, 75--91. \MR{2338855}

\bibitem[LNP15]{LNP15}
Michel Ledoux, Ivan Nourdin, and Giovanni Peccati, \emph{Stein's method,
  logarithmic {S}obolev and transport inequalities}, Geom. Funct. Anal.
  \textbf{25} (2015), no.~1, 256--306. \MR{3320893}

\bibitem[L{\"u}c02]{Luc02}
Wolfgang L{\"u}ck, \emph{{$L^2$}-invariants: theory and applications to
  geometry and {$K$}-theory}, Ergebnisse der Mathematik und ihrer Grenzgebiete.
  3. Folge. A Series of Modern Surveys in Mathematics [Results in Mathematics
  and Related Areas. 3rd Series. A Series of Modern Surveys in Mathematics],
  vol.~44, Springer-Verlag, Berlin, 2002. \MR{1926649}

\bibitem[MS05]{MS05}
Igor Mineyev and Dimitri Shlyakhtenko, \emph{Non-microstates free entropy
  dimension for groups}, Geom. Funct. Anal. \textbf{15} (2005), no.~2,
  476--490. \MR{2153907}

\bibitem[MSW17]{MSW17}
Tobias Mai, Roland Speicher, and Moritz Weber, \emph{Absence of algebraic
  relations and of zero divisors under the assumption of full non-microstates
  free entropy dimension}, Adv. Math. \textbf{304} (2017), 1080--1107.
  \MR{3558227}

\bibitem[R{\u{a}}d94]{Rad94}
Florin R{\u{a}}dulescu, \emph{Random matrices, amalgamated free products and
  subfactors of the von {N}eumann algebra of a free group, of noninteger
  index}, Invent. Math. \textbf{115} (1994), no.~2, 347--389. \MR{1258909}

\bibitem[Shl04]{Shl04}
Dimitri Shlyakhtenko, \emph{Some estimates for non-microstates free entropy
  dimension with applications to {$q$}-semicircular families}, Int. Math. Res.
  Not. (2004), no.~51, 2757--2772. \MR{2130608}

\bibitem[Shl06]{Shl06}
\bysame, \emph{Remarks on free entropy dimension}, Operator {A}lgebras: {T}he
  {A}bel {S}ymposium 2004, Abel Symp., vol.~1, Springer, Berlin, 2006,
  pp.~249--257. \MR{2265052}

\bibitem[Voi93]{VoiI}
Dan-Virgil Voiculescu, \emph{The analogues of entropy and of {F}isher's
  information measure in free probability theory. {I}}, Comm. Math. Phys.
  \textbf{155} (1993), no.~1, 71--92. \MR{1228526}

\bibitem[Voi94]{VoiII}
\bysame, \emph{The analogues of entropy and of {F}isher's information measure
  in free probability theory. {II}}, Invent. Math. \textbf{118} (1994), no.~3,
  411--440. \MR{1296352 (96a:46117)}

\bibitem[Voi96]{VoiIII}
\bysame, \emph{The analogues of entropy and of {F}isher's information measure
  in free probability theory. {III}. {T}he absence of {C}artan subalgebras},
  Geom. Funct. Anal. \textbf{6} (1996), no.~1, 172--199. \MR{1371236}

\bibitem[Voi98]{VoiV}
\bysame, \emph{The analogues of entropy and of {F}isher's information measure
  in free probability theory. {V}. {N}oncommutative {H}ilbert transforms},
  Invent. Math. \textbf{132} (1998), no.~1, 189--227. \MR{1618636 (99d:46087)}

\end{thebibliography}

\end{document}